\xpatchcmd{\paragraph}{\normalfont}{{\normalfont\bfseries}}{}{}
\newcommand{\be}{\begin{equation}}
\newcommand{\ee}{\end{equation}}
\newcommand{\ba}{\begin{array}}
\newcommand{\ea}{\end{array}}
\def\bea{\begin{eqnarray}}
\def\eea{\end{eqnarray}}
\def \beas{\begin{eqnarray*}}
\def \eeas{\end{eqnarray*}}
\newtheorem{definition}{Definition}[section]
 \newtheorem{remark}{Remark}[section]
\newtheorem{exmp}{Example}[section]
\newtheorem{thm}{Theorem}[section]
\newtheorem{lemma}{Lemma}[section]
\newtheorem{prop}{Proposition}[section]
\newtheorem{corollary}{Corollary}[section]
\DeclareMathOperator*{\argmin}{arg\,min}
\def\<{\langle}
\def\>{\rangle}
\def\ba{{\bf a}}
\begin{document}
\begin{center}
	\large \bf On preconditioned Riemannian gradient methods for minimizing the Gross-Pitaevskii energy functional: algorithms, global convergence and optimal local convergence rate
\end{center}
\vspace{6pt}
\begin{center}
	\large  Zixu Feng  and Qinglin Tang$^{\it *}$
\end{center} 
\vspace{6pt}	

\normalsize
\rm
\begin{center}\it		
	School of Mathematics, Sichuan University, Chengdu 610064,  P. R. China
	
	$^*$E-mail: qinglin\_tang@scu.edu.cn
\end{center}

%	\date{\today}
%\today 
\footnotesize
\vspace{6pt}
\textbf{Abstract.} In this article, we propose a unified framework to develop and analyze a class of preconditioned Riemannian gradient methods (P-RG) for   minimizing   Gross-Pitaevskii (GP) energy functionals with rotation on a Riemannian manifold. This framework enables one to carry out  a comprehensive analysis of all existing projected Sobolev gradient methods, and more important, to construct a P-RG with improved efficiency for computing minimizers of GP energy functionals. For mild assumptions on the preconditioner, the energy dissipation and global convergence of the P-RG are thoroughly proved. As for the local convergence analysis of the P-RG, it is  much more challenging due to the two invariance properties of the GP energy functional caused by phase shifts and rotations. To address this issue,  assuming  the GP energy functional is a Morse-Bott functional, we first derive the celebrated Polyak-\L ojasiewicz (PL) inequality  around its minimizer. The PL inequality is sharp, therefore allows us to precisely characterize the local convergence rate of the P-RG via condition number $\frac{\mu}{L}$. Here, $\mu$ and $L$ are respectively the lower and upper bound of the spectrum of an combined operator closely related to the preconditioner and Hessian of the GP energy functional on a closed subspace. The precise form of the convergence rate $\sqrt{1-\frac{\mu-\varepsilon}{L+\varepsilon}}$ (with $\varepsilon > 0$ sufficiently small) reveals how to design a preconditioner. Based on this analysis, we construct a quasi-optimal preconditioner and establish its optimal local convergence rate via spectral analysis, i.e., $\frac{L-\mu}{L+\mu}+\varepsilon$, which is the best rate one can possibly achieve for a Riemannian gradient method. To the best of our knowledge, this study represents the first to rigorously derive the local convergence rate of the P-RG for minimizing the Gross-Pitaevskii energy functional with two symmetric structures. Finally, numerical examples related to rapidly rotating Bose-Einstein condensates are carried out to compare the performances of P-RG with different preconditioners and to verify the theoretical findings.
\vspace{6pt}

{\bf Keywords:} Gross-Pitaevskii energy functional, Bose-Einstein condensates, preconditioner, Riemannian gradient method, Morse-Bott functional, Polyak-\L ojasiewicz inequality, global convergence, local convergence

\vspace{6pt}
{\bf MSC codes.} 35Q55, 47A75, 49K27, 49R05, 90C26 
\normalsize

%%%%% Begin Abstract %%%%%%%%%%%

%\tableofcontents

\section{Introduction}
\setcounter{equation}{0}

The Gross-Pitaevskii energy functional and the corresponding equation play a crucial role in various domains of quantum physics, particularly in cold atom physics, nonlinear optics, astrophysics, quantum fluids and turbulence \cite{1995Observation,2014Intro,2013Quan, K1995Bose,2000Fuzzy,Klaers2010Bose}. It originates from the description of Bose-Einstein condensates (BECs), a macroscopic quantum phenomenon where a large number of bosons occupy the lowest quantum state at extremely low temperatures. Subsequently, the application of this theory has been extended to other fields. In nonlinear optics, the propagation equations of light pulses in nonlinear media share a similar form with the Gross-Pitaevskii equation, facilitating the study of spatial optical solitons and vortex beams. Moreover, hypothetical dark matter candidates, such as ultra-light axions, or the interiors of neutron stars may exhibit BEC-like coherence on macroscopic scales, suggesting potential applications of the Gross-Pitaevskii equation in astrophysical contexts. Additionally, the Gross-Pitaevskii equation is employed to investigate turbulence phenomena, including the entanglement of vortex lines and energy cascades in quantum fluids. 

The minimizer of the Gross-Pitaevskii energy functional holds significant importance in physics, particularly in describing BECs and other quantum systems.
Mathematically, minimizers of the Gross-Pitaevskii energy functional are defined under the $L^2$ normalization constraint. As outlined in the comprehensive review by Bao et al. \cite{2013Mathematical}, the dimensionless Gross-Pitaevskii energy functional incorporating the rotation term is given by
\begin{align}\label{GP-Energy}
	E(\phi):=\frac{1}{2}\int_{\mathbb{R}^d}\left(\frac{1}{2}|\nabla\phi|^2+V(\bm{x})|\phi|^2
	-\Omega\overline{\phi} \mathcal{L}_z\phi+F(\rho_{\phi})\right)\text{d}\bm{x}.
\end{align}
Here, $\bm{x}\in\mathbb{R}^d\ (d=2,3)$ denotes spatial variables, with $\bm{x}=(x, y)^T$ in two-dimensional or $\bm{x}=(x, y, z)^T$ in three-dimensional. {The function} $V(\bm{x})$ is a real-valued external potential and satisfies $\lim\limits_{|\bm{x}|\to\infty}V(\bm{x})=\infty$. The rotation term is characterized by the angular momentum $\mathcal{L}_z=-i(x\partial_y-y\partial_x)$ and the rotation frequency $\Omega\ge0$. {Let} $\overline{\phi}$ denotes the complex conjugate of $\phi$. The nonlinear interaction term can be written as follows
\begin{align*}
	F(\rho_{\phi})=\int_{0}^{\rho_{\phi}}f(s)\;\text{d}s,\quad\ \rho_{\phi}:=|\phi|^2.
\end{align*}
In the physical literature, the real-valued function $f(s)$ is defined in the forms: $f(s)=\eta s$, $\eta s\log s$, and $\eta s+\eta_{LHY} s^{3/2}$ \cite{2021Rota,2006Deri,2020Supp,2019Rota}. 
The constraint is defined as
\begin{align*}
	N(\phi):=\|\phi\|_{L^2(\mathbb{R}^d)}^2=\int_{\mathbb{R}^d}|\phi|^2\;\text{d}\bm{x}=1.
\end{align*}
The minimizer of the Gross-Pitaevskii energy functional is represented by the macroscopic wave function $\phi_g$, which is defined as follows:
\begin{align}\label{O-P}
	\phi_g(\bm{x}):=\argmin_{\phi\in\mathcal{M}} E(\phi)\quad
	\mbox{with} \quad  \mathcal{M}:=\left\{\phi\in H^1(\mathbb{R}^d)\big|\|\phi\|_{L^2(\mathbb{R}^d)}^2=1\right\}.
\end{align}

Over the past two decades, various iterative solvers have been proposed to compute the minimizer of rotating or non-rotating Gross-Pitaevskii energy functional. These solvers mainly consist of energy minimization methods based on gradient flows \cite{2014Robust, 2017Efficient, 2003Computing, 2006Efficient, 2023Second, 2024On, 2000Ground, 2010A, 2017Computation, 2001Optimi, 2020Sobolev, 2023The, 2010Tackling, 2021Normalized, 2017A, 2024Anew, 2022Exp, 2019Efficient} and some nonlinear eigenvalue solvers \cite{2021The, 2007Dion, 2023The, 2014An}. Despite the large variety of methods, analytical convergence results are scarce, especially for cases involving rotation terms. For the non-rotating case ($\Omega = 0$), the first convergence result was obtained by Faou et al. \cite{2017Convergence}, who proved local convergence for the discrete normalized gradient flow (DNGF) in the cases where \(d = 1\) and $f(s)=\eta s$ with \(\eta \leq 0\). Later, in \cite{2023The}, Henning interpreted DNGF as a special inverse power iteration method and derived its local convergence results for \(d = 1, 2, 3\) and $f(s)=\eta s$ with \(\eta \geq 0\). Some convergence results for a series of time-semidiscretized projected Sobolev gradient flows were obtained in \cite{2024On, 2020Sobolev, 2023The, 2022Exp}, again for \(d = 1, 2, 3\) and $f(s)=\eta s$ with \(\eta \geq 0\). These convergence results rely on a special property of the ground state: the ground state of the nonlinear problem is also the unique ground state of its linearized version (cf. \cite{2010Numerical}), which cannot apply to the rotating cases ($\Omega>0$). To the best of our knowledge, only two studies have demonstrated the convergence of iterative solvers for the rotating cases. These are the $J$-method \cite{2021The} (a particular inverse iteration method originally proposed by Jarlebring et al. \cite{2014An}) and the adaptive Riemannian gradient method \cite{2024Convergence} (also known as the projected Sobolev gradient method, first proposed by Henning et al. \cite{2020Sobolev}). The difficulty of this problem \eqref{O-P} lies in the non-convexity of the constraint functional and the invariance properties of the Gross-Pitaevskii energy functional. 
\noindent $1)$ The first invariance property arises from phase shifts: for a minimizer \(\phi_g\) and any \(\alpha \in [-\pi, \pi)\), a global phase translation \(e^{i\alpha}\phi_g\) remains a minimizer. $2)$ The second invariance property comes from coordinate rotations: assuming the trapping potential \(V(\bm{x})\) is rotationally symmetric about the $z$-axis, i.e., for any \(\beta \in [-\pi, \pi)\), \(V(\bm{x})=V(A_{\beta}\bm{x})\), where
\begin{align*}
	A_{\beta}=\left(\begin{matrix}
		\cos\beta &-\sin\beta\\
		\sin\beta &\cos\beta
	\end{matrix}\right)\ \text{for}\; d=2,\quad A_{\beta}=\left(\begin{matrix}
		\cos\beta &-\sin\beta &0\\
		\sin\beta & \cos\beta &0\\
		0      & 0         &1
	\end{matrix}\right)\ \text{for}\; d=3.
\end{align*}
Then, for a minimizer \(\phi_g\) and any \(\beta \in [-\pi, \pi)\), a coordinate transformation \(\phi_g(A_{\beta}\bm{x})\) also produces a minimizer. 

\textbf{Contribution.} Previous studies \cite{2024Riem,2024On,2010A,2017Computation,2020Sobolev,2023The,2024Convergence,2010Tackling,2022Exp} have considered both non-rotational and rotational cases. {Our work primarily focuses on the rotating setting, where the situation differs significantly from the non-rotating case. To the best of our knowledge, only \cite{2024Convergence} has established a quantitative local convergence rate in this setting. However, this convergence rate describes convergence to an equivalence class of minimizers rather than to a specific limit point. Moreover, it is restricted to the specific preconditioner $\mathcal{P}_\phi = \mathcal{H}_\phi$.} The first major contribution of this work is the proposal of a unified framework for the design and analysis of preconditioned Riemannian gradient methods for minimizing the Gross-Pitaevskii energy functional. This framework considers both the phase shift invariance and the coordinate rotation invariance of the energy functional. Under the assumption that the energy functional is a Morse–Bott functional, we provide an exact characterization of the linear convergence rate for preconditioned Riemannian gradient methods. This framework encompasses all existing Sobolev gradient projection methods. Furthermore, by precisely characterizing the local convergence behavior, we derive the locally quasi-optimal preconditioner and identify the corresponding optimal local convergence rate. Finally, a central contribution of this work is the extension of the optimal convergence rate of Riemannian gradient descent from isolated minimizers satisfying the second-order sufficient condition to the Morse-Bott setting.

The rest of the paper is organized as follows: In Section \ref{Sec2}, we introduce preliminary notations and present the properties of minimization problem. In Section \ref{Sec3}, we present the necessary assumptions on the preconditioner and then introduce preconditioned Riemannian gradient methods and discuss its properties. In Section \ref{Sec4}, the convergence results of the proposed algorithms and the corresponding theoretical proofs are provided. In Section \ref{Sec5}, we verify the theoretical findings through a series of convincing numerical experiments. Finally, conclusions are presented in Section \ref{Sec6}.

\section{Preliminaries}\label{Sec2}
In this section, we introduce problem settings, basic notations, and some important properties of the problem. 

\subsection{Problem settings and notations}
In our analytical settings, the domain is truncated from the full space $\mathbb{R}^d$ to the bounded domain $\mathcal{D}$ and the homogeneous Dirichlet boundary condition is imposed on $\partial\mathcal{D}$ due to the trapping potential. On the bounded domain $\mathcal{D}$, we adopt standard notations for the Lebesgue spaces $L^p(\mathcal{D})=L^p(\mathcal{D},\mathbb{C})$ and the Sobolev space $H^1(\mathcal{D})=H^1(\mathcal{D},\mathbb{C})$ as well as the corresponding norms $\|\cdot\|_{L^p}$ and $\|\cdot\|_{H^1}$. Here, we drop the $\mathcal{D}$ dependence in the norms to simplify the notations. Thereby, we consider the Gross-Pitaevskii energy functional \eqref{GP-Energy} and the constrained optimization problem \eqref{O-P} on $\mathcal{D}$, i.e.,
\begin{align}\label{Riem-Opt-Problem}
	\nonumber E(\phi)&:=\frac{1}{2}\int_{\mathcal{D}}\left(\frac{1}{2}|\nabla\phi|^2+V(\bm{x})|\phi|^2
	-\Omega\overline{\phi} \mathcal{L}_z\phi+F(\rho_{\phi})\right)\text{d}\bm{x}\quad\text{and}\\
	\phi_g&:=\argmin_{\phi\in\mathcal{M}} E(\phi)\quad
	\mbox{with} \quad  \mathcal{M}:=\left\{\phi\in H_0^1(\mathcal{D})\big|\|\phi\|_{L^2}^2=1\right\}.
\end{align}
Furthermore, $\mathcal{M}$ is a Riemannian manifold, its tangent space is denoted by \(T_{\phi}\mathcal{M}\):
{\begin{align}\label{Tangent-space}
	T_{\phi}\mathcal{M}:=\left\{v\in H^1_0(\mathcal{D})\;\Big|\;\text{Re}\int_{\mathcal{D}}\phi\overline{v}\;\text{d}\bm{x}=0\right\}\quad\text{for}\quad \phi\in\mathcal{M}.
\end{align}}
For the simplicity of presentation, in what follows, we always assume that\\
\begin{itemize}%[label={}, labelsep=0pt, leftmargin=*]
	\item[\bf (A1)] {The domain} $\mathcal{D}\subset\mathbb{R}^d$ is a bounded Lipschitz-domain that is rotationally symmetric about the $z$-axis for $d=2, 3$, such as a disk for $d=2$ and a ball for $d=3$.
	\item[\bf (A2)] {The potential function} $V\in L^{\infty}(\mathcal{D})$ is a rotationally symmetric about the $z$-axis, i.e., $V(\bm{x})=V(A_{\beta}\bm{x})$. 
	\item[\bf (A3)] {The nonlinear term} $f \colon \mathbb{R}_{+} \to \mathbb{R}_{+}$ is differentiable, satisfies $f(0) = 0$, and $\lim\limits_{s \to 0^+} f'(s^2)s^2 = 0$. Additionally, there exists $\theta \in [0,3)$ such that the function $s \mapsto f'(s^2)s^2$ is Lipschitz continuous with polynomial growth. Specifically, for every $s_1, s_2 \ge 0$, we have
	\begin{align*}
		\left| f'(s_1^2)s_1^2 - f'(s_2^2)s_2^2 \right| \le C (s_1 + s_2)^{\theta} |s_1 - s_2|.
	\end{align*}
	\item[\bf (A4)] There is a constant $K>0$ such that
	\begin{align*}
		V(\bm{x})-\frac{1+K}{2}\Omega^2(x^2+y^2)\ge0\quad \text{for almost all}\ \bm{x}\in\mathcal{D}.
	\end{align*}
	\item[\bf (A5)] If $\phi_g$ is a minimizer, then $\mathcal{L}_z\phi_g\in H_0^1(\mathcal{D})$.
\end{itemize}

\medskip

Let us begin with some explanations of the above assumptions. {\bf (A1)} and {\bf (A2)} ensure that the Gross-Pitaevskii energy functional possesses rotational invariance with respect to coordinate rotations. For  {\bf (A3)}, the condition $f\ge0$ can be relaxed to being lower-bounded, but for simplicity, we assume non-negativity. The assumption on  $f'$ is adapted from the classical reference \cite{2003Semi} to ensure that the Gross-Pitaevskii energy functional is $C^2(H^1_0(\mathcal{D}),\mathbb{R})$. Regarding {\bf (A4)}, we can relax the condition to allow values greater than a certain negative constant, but for simplicity in our analysis, we assume that {\bf (A4)} holds. Since any stationary states must be exponentially decaying, {\bf (A5)} is rarely violated in practical calculations. {{\bf (A5)} ensures that, under assumption {\bf (A2)}, $i\mathcal{L}_z\phi_g$ is well-defined in the tangent space $T_{\phi_g}\mathcal{M}$. If it were not satisfied, $i\mathcal{L}_z\phi_g$ would not lie in the tangent space, and thus could not be a zero eigenfunction of $E''(\phi_g) - \lambda_g\mathcal{I}$ (see {\bf Proposition~\ref{Prop1}}).} {We remark that {\bf (A5)} holds rigorously when the domain $\mathcal{D}$ is rotationally symmetric about the $z$-axis and has a $C^{1,1}$ boundary, since $\phi_g \in H^2(\mathcal{D}) \cap H_0^1(\mathcal{D})$ implies that the relevant trace is well defined and vanishes.} These assumptions we consider are widely accepted in both numerical simulations and physical experiments, making them meaningful in practice. Moreover, under the assumptions of {\bf (A1)}-{\bf (A4)}, the existence of minima \eqref{Riem-Opt-Problem} can be proven using standard techniques. For more details, see \cite{2013Mathematical}, which will not be discussed in this paper.

Since the Gross-Pitaevskii energy functional \(E\) is real-valued while the wave function \(\phi\) is complex-valued, \(E\) is not complex Fr\'echet differentiable in the usual complex Hilbert space. Therefore, we work within a real-linear space consisting of complex-valued functions, as done in \cite{2021The, 2003Semi}. In this setting, the function space is viewed as a real Hilbert space, meaning that all variations are taken with respect to real parameters. To this end, we equip the Lebesgue space \(L^2(\mathcal{D})\) and the Sobolev space \(H^1_0(\mathcal{D})\) with the following real inner products:
\begin{align*}
	(u,v)_{L^2}:=\text{Re}\int_{\mathcal{D}}u \overline{v}\;\text{d}\bm{x}\quad \text{and}\quad (u,v)_{H^1}:=\text{Re}\left(\int_{\mathcal{D}}u \overline{v}\;\text{d}\bm{x}+\int_{\mathcal{D}}\nabla u \overline{\nabla v}\;\text{d}\bm{x}\right).
\end{align*}
The corresponding real dual space is denoted
by $H^{-1}(\mathcal{D}):=\big(H^1_0(\mathcal{D})\big)^*$. And for any set $\mathcal{U}\subset\mathcal{M}$, we introduce the $\sigma$-neighborhood $\mathcal{B}_{\sigma}(\mathcal{U})$ of $\mathcal{U}$ by
\begin{align}
	\mathcal{B}_{\sigma}(\mathcal{U}):=\left\{\varphi\in\mathcal{M}\big|\exists\phi\in\mathcal{U},\|\varphi-\phi\|_{H^1}<\sigma\right\}.
\end{align}

Then, we define a real-symmetric and coercive bilinear form through the symmetric and coercive real linear operator $\mathcal{A}:H_0^1(\mathcal{D})\to H^{-1}(\mathcal{D})$ as follows:
\begin{equation}\label{Bilinear-operator}
	(u,v)_{\mathcal{A}} := \big\langle \mathcal{A}u, v \big\rangle \quad \text{for all}\; u,v \in H_0^1(\mathcal{D}),
\end{equation}
where $\langle\cdot,\cdot\rangle$ represents the canonical duality pairing between $H^{-1}(\mathcal{D})$ and $H_0^1(\mathcal{D})$. This bilinear form induces an inner product on $H_0^1(\mathcal{D})$, with the associated norm given by $\|v\|_{\mathcal{A}} := \sqrt{\langle\mathcal{A}v, v\rangle}$. Furthermore, for any closed subset $W \subset H_0^1(\mathcal{D})$, we denote its orthogonal complement relative to this inner product by
\begin{equation}\label{Orth-Comp}
	W^{\bot}_{\mathcal{A}} := \left\{ u \in H_0^1(\mathcal{D}) \big| (u,v)_{\mathcal{A}} = 0, \ {\text{for all}}\; v \in W \right\}.
\end{equation}

Finally, hereinafter, we introduce two types of constants:

\noindent $(i)$ {The constant} $ C $ denotes a generic constant depending only on $ \mathcal{D} $, $ d $, $ K $, and  $V_{\infty}:=\|V\|_{L^{\infty}}$. This includes constants arising from Sobolev inequalities.

\noindent $(ii)$ {The constant} $ C_{v_1,\dots, v_k} $ denotes a positive constant that depends monotonically increasing on the $ H^1$-norms of the functions $ v_1,\dots, v_k $. For any $j \in \{1,\dots,k\}$, if
\begin{equation}
	\|v_{j}\|_{H^1} \leq \|\widetilde{v}_{j}\|_{H^1},
\end{equation}
then it follows that
\begin{equation}
	C_{v_1,\dots, v_{j},\dots, v_k} \leq C_{v_1,\dots, \widetilde{v}_{j},\dots, v_k},
\end{equation}
and in particular, if $\|v_j\|_{H^1} \leq M$, we have
\begin{equation}
	C_{v_1,\dots, v_j,\dots, v_k} \leq C_{v_1,\dots, M,\dots, v_k}.
\end{equation}

\subsection{Properties of the problem}
Given $\phi\in H_0^1(\mathcal{D})$, we introduce a bounded real linear operator $\mathcal{H}_{\phi}:H_0^1(\mathcal{D})\to H^{-1}(\mathcal{D})$ by 
\begin{align}
	\left\langle\mathcal{H}_{\phi}u,v\right\rangle:=\frac{1}{2}\left(\nabla u,\nabla v\right)_{L^2}+\left(\left(V -\Omega\mathcal{L}_z\right)u,v\right)_{L^2}+\big\langle f(\rho_{\phi})u,v\big\rangle,\quad{\text{for all}}\;\ u,v \in H_0^1(\mathcal{D}),
\end{align}
where
\begin{align*}
	\big\langle f(\rho_{\phi})u,v\big\rangle:=\text{Re}\int_{\mathcal{D}}f(\rho_{\phi})u\overline{v}\;\text{d}\bm{x}.
\end{align*}

In particular, the linear part of $\mathcal{H}_{\phi}$, i.e., let $f(\rho_{\phi})=0$ in $\mathcal{H}_{\phi}$, is denoted by $\mathcal{H}_{0}$.
Under our assumptions, $\mathcal{H}_0$ is continuous and coercive. Especially,  $\|\cdot\|_{\mathcal{H}_0}$ is equivalent to the $H^1$-norm (cf. \cite{2010A}).

From an optimization perspective, the minimizer $\phi_g$ satisfies the first-order and second-order necessary conditions:{
\begin{align}
	\phi_g \in \mathcal{M}, \quad 
	E'(\phi_g) = \lambda_g \mathcal{I} \phi_g, \quad \text{and} \quad 
	\left\langle \big(E''(\phi_g) - \lambda_g \mathcal{I}\big) v, v \right\rangle \ge 0 
	\quad \text{for all } v \in T_{\phi_g} \mathcal{M},
\end{align}}
where $E'(\phi)=\mathcal{H}_{\phi}\phi:H^1_0(\mathcal{D})\to H^{-1}(\mathcal{D})$ denotes the {real Fr\'echet derivative} of $E(\phi)$, $\lambda_g=\left\langle\mathcal{H}_{\phi_g}\phi_g,\phi_g\right\rangle$ is an eigenvalue with eigenfunction $\phi_g$, $\mathcal{I}: L^2(\mathcal{D})\to L^2(\mathcal{D})\subset H^{-1}(\mathcal{D})$ denotes the canonical identification $\mathcal{I}v:=(v,\cdot)_{L^2}$, $E''$ denotes the second real Fr\'echet derivative. Given $\phi\in H_0^1(\mathcal{D})$, \(E''(\phi):H_0^1(\mathcal{D})\to H^{-1}(\mathcal{D})\) is computed as
\begin{align}
	\left\langle E''(\phi)u,v\right\rangle&=\left\langle \mathcal{H}_{\phi}u,v\right\rangle+\left\langle f'(\rho_{\phi})\big(|\phi|^2+\phi^2\overline{\cdot}\big)u,v\right\rangle,
\end{align}
where 
\begin{align*}
	\left\langle f'(\rho_{\phi})\big(|\phi|^2+\phi^2\overline{\cdot}\big)u,v\right\rangle:=\text{Re}\int_{\mathcal{D}}f'(\rho_{\phi})\big(|\phi|^2+\phi^2\overline{\cdot}\big)u\overline{v}\;\text{d}\bm{x}.
\end{align*}
Obviously, $E''(\phi)$ is symmetric. Notice that under the assumption of {\bf (A3)}, both $E'$ and $E''$ are well defined as bounded real linear operators on $H_0^1(\mathcal{D})$ (see {\bf Proposition~\ref{Property-of-E''}}).

In particular, for $\Omega=0$ and $f(s)=\eta s,\ \eta\ge0$, when the space of functions is restricted to real-valued functions, then the second-order sufficient condition is
satisfied at the minimizer:
\begin{align}\label{SOSC}
	\left\langle\big(E''(\phi_g)-\lambda_g\mathcal{I}\big)v,v\right\rangle\ge C\|v\|^2_{H^1}\quad \text{for all}\ v\in T_{\phi_g}\mathcal{M}.
\end{align}
In the \ref{Appendix-SOSC}, we explain why the second-order sufficient condition takes the above form in an infinite-dimensional Hilbert space. This condition implies the local uniqueness of the minimum. This is not true for $\Omega>0$, but we will see that it holds on a closed subspace of $T_{\phi_g}\mathcal{M}$.

Indeed, given a minimizer \(\phi_g\) and any angles \(\alpha, \beta \in [-\pi, \pi)\), $e^{i\alpha}\phi_g(A_{\beta}\bm{x})$ is also a minimizer with the same eigenvalue $\lambda_g$ by
\[
\|e^{i\alpha}\phi_g(A_{\beta}\bm{x})\|_{L^2} \equiv \|\phi_g\|_{L^2},\quad E(e^{i\alpha}\phi_g(A_{\beta}\bm{x})) \equiv E(\phi_g), 
\]
and
\[
\lambda_g=2E(\phi_g)+\int_{\mathcal{D}}\left(f(\rho_{\phi_g})|\phi_g|^2-F(\rho_{\phi_g})\right)\text{d}\bm{x}, 
\]
which may present additional challenges in the convergence analysis of common algorithms. 

In light of this, local uniqueness of minimizers can only be expected up to a constant phase and rotation factor. To account for the general lack of uniqueness by phase shifts and coordinate rotations, we define the phase shifts and coordinate rotations as linear group actions $I_{\alpha}^{\beta}$ for any function $\phi$ 
\begin{align}\label{I-alpha-beta}
	I_{\alpha}^{\beta}\phi:=e^{i\alpha}\phi(A_{\beta}\bm{x})\quad \text{for all}\ \alpha,\beta\in[-\pi, \pi).
\end{align}
We introduce the following set and energy level constructed from a minimizer $\phi_g$:
\begin{align}\label{mathcal_S}
	\mathcal{S}:=\Big\{\phi\in \mathcal{M}\big|\phi=I_{\alpha}^{\beta}\phi_{g},\ \alpha,\beta\in[-\pi, \pi)\Big\}\quad\text{and}\quad E_{\mathcal{S}}:=E(\phi),\quad {\text{for all}}\;\phi\in\mathcal{S}.
\end{align}
Noting that $\mathcal{S}$ is the orbit of the ground state under the group action $I_{\alpha}^{\beta}$, it is a finite-dimensional $C^1$ submanifold of $\mathcal{M}$. {The tangent space to this orbit at $\phi$ is spanned by the infinitesimal generators of the group action, namely phase and rotation:}
\[
T_{\phi}\mathcal{S} = \mathrm{span}\big\{ i\phi,\, i\mathcal{L}_z \phi \big\}.
\]
 In addition, $\dim \mathcal{S} = 1$ if $\phi$ is rotationally symmetric (i.e., $\phi = e^{ic\theta} \varphi(r,z)$), and $\dim \mathcal{S} = 2$ otherwise. In this work, we focus on the more challenging case $\dim \mathcal{S} = 2$, where the symmetry-induced degeneracy is maximal. To eliminate the influence of this degeneracy, we define the subspace
\begin{align}\label{Orth-Complement}
	N_{\phi}\mathcal{M} := \Big\{ v \in T_{\phi}\mathcal{M} \,\Big|\, (i\phi, v)_{L^2} = 0,\, (i\mathcal{L}_z\phi, v)_{L^2} = 0 \Big\},
\end{align}
which is orthogonal to the symmetry directions in $L^2$. This space will play a key role in the convergence analysis.

\begin{remark}
	If the quadratic and nonlinear parts of $E$ admit additional finite symmetries arising from linear group actions, the resulting critical submanifold $\mathcal{S}$ may have a higher dimension. However, the theoretical results established in this work still hold. Without loss of generality, we focus on the two-dimensional case, which is consistent with numerical experiments.
\end{remark}

The following proposition states that the second-order sufficient condition does not hold for  the case $\Omega>0$.

\begin{prop}\label{Prop1}
	Assume {\bf (A1)}-{\bf (A5)}. Then, for all $\phi\in\mathcal{S}$, it holds that {$T_{\phi}\mathcal{S}\subseteq\ker\left(E''(\phi)-\lambda_g\mathcal{I}\right)|_{T_{\phi}\mathcal{M}}$}, i.e., for all $v\in T_{\phi}\mathcal{M}$
	\begin{align*}
		\left\langle (E''(\phi)-\lambda_g\mathcal{I})i\phi,v\right\rangle=0\quad and\quad	\left\langle (E''(\phi)-\lambda_g\mathcal{I})i\mathcal{L}_z\phi,v\right\rangle=0.
	\end{align*}
\end{prop}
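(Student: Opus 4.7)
The plan is to exploit the invariance of $E$ and $\mathcal{M}$ under the group action $I_\alpha^\beta$ and then to differentiate the first-order optimality condition along the orbit. For every $\alpha,\beta\in[-\pi,\pi)$, $I_\alpha^\beta\phi_g$ is a minimizer with the \emph{same} Lagrange multiplier $\lambda_g$ (this follows from $E(I_\alpha^\beta\phi_g)=E(\phi_g)$, $\|I_\alpha^\beta\phi_g\|_{L^2}=1$, and the explicit formula for $\lambda_g$ in terms of $E(\phi_g)$ and $\rho_{\phi_g}$ already displayed in the excerpt), so the criticality identity
$$E'(I_\alpha^\beta\phi_g)=\lambda_g\,\mathcal{I}(I_\alpha^\beta\phi_g)$$
holds in $H^{-1}(\mathcal{D})$ for every $(\alpha,\beta)$.

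Next, I would verify that the orbit map $(\alpha,\beta)\mapsto I_\alpha^\beta\phi_g$ is of class $C^1$ from a neighbourhood of $(0,0)$ into $H^1_0(\mathcal{D})$, with
$$\partial_\alpha I_\alpha^\beta\phi_g\big|_{(0,0)}=i\phi_g,\qquad \partial_\beta I_\alpha^\beta\phi_g\big|_{(0,0)}=i\mathcal{L}_z\phi_g.$$
The $\alpha$-derivative is immediate from $e^{i\alpha}=1+i\alpha+o(\alpha)$. For the $\beta$-derivative, a direct chain-rule computation gives $\partial_\beta\phi_g(A_\beta\bm{x})|_{\beta=0}=-y\,\partial_x\phi_g+x\,\partial_y\phi_g=i\mathcal{L}_z\phi_g$; here assumption \textbf{(A5)} is decisive, since it places $i\mathcal{L}_z\phi_g$ in $H^1_0(\mathcal{D})$. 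Equivalently, \textbf{(A5)} says that $\phi_g$ lies in the $H^1_0$-domain of the infinitesimal generator of the rotation group, which is exactly what is needed to promote the pointwise difference-quotient limit to an $H^1_0$-norm limit. The same regularity also puts both derivatives into $T_{\phi_g}\mathcal{M}$, the required $L^2$-orthogonality to $\phi_g$ following from the skew-adjointness of multiplication by $i$ and the self-adjointness of $\mathcal{L}_z$ on $H^1_0(\mathcal{D})$.

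Applying the chain rule to the criticality identity and differentiating at $(0,0)$, using the Fr\'echet differentiability of $E'$ with derivative $E''$ (well defined thanks to \textbf{(A3)} and Proposition~\ref{Property-of-E''}), then yields
$$E''(\phi_g)(i\phi_g)=\lambda_g\,\mathcal{I}(i\phi_g),\qquad E''(\phi_g)(i\mathcal{L}_z\phi_g)=\lambda_g\,\mathcal{I}(i\mathcal{L}_z\phi_g)$$
as identities in $H^{-1}(\mathcal{D})$. Pairing against any $v\in T_{\phi_g}\mathcal{M}$ produces the two stated equalities. For an arbitrary $\phi=I_{\alpha_0}^{\beta_0}\phi_g\in\mathcal{S}$, repeating the argument at the base point $(\alpha_0,\beta_0)$---or equivalently invoking the equivariance of $E'$ and $E''$ under $I_\alpha^\beta$---transfers the conclusion from $\phi_g$ to $\phi$.

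The main technical obstacle I anticipate sits in the second step: one must certify that the rotation orbit is $C^1$ as a curve into $H^1_0(\mathcal{D})$, not merely into $L^2(\mathcal{D})$, so that chain-rule differentiation of $E'$ in the $H^{-1}$-sense is rigorous. Without \textbf{(A5)}, the pointwise formula for $\partial_\beta\phi_g(A_\beta\bm{x})$ would still hold in a weaker sense, but the derivative would not lie in the domain of $E''$ or in $T_{\phi_g}\mathcal{M}$, and the differentiation under $E'$ would collapse. Once \textbf{(A5)} is in hand, the remainder reduces to routine chain-rule bookkeeping.
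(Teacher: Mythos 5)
Your proposal is correct, but it takes a genuinely different route from the paper's. You differentiate the first-order criticality identity $E'(I_{\alpha}^{\beta}\phi_g)=\lambda_g\mathcal{I}(I_{\alpha}^{\beta}\phi_g)$ once along the symmetry orbit, after checking that the multiplier is constant on the orbit and that the orbit map is $C^1$ into $H^1_0(\mathcal{D})$ (where \textbf{(A5)} enters), and so obtain $E''(\phi_g)(i\phi_g)=\lambda_g\mathcal{I}(i\phi_g)$ and $E''(\phi_g)(i\mathcal{L}_z\phi_g)=\lambda_g\mathcal{I}(i\mathcal{L}_z\phi_g)$ directly as identities in $H^{-1}(\mathcal{D})$. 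The paper instead differentiates the scalar invariance identities $E(\gamma_i(t))\equiv E(\phi)$ and $\|\gamma_i(t)\|_{L^2}^2\equiv 1$ twice along the curves $\gamma_1(t)=e^{it}\phi$, $\gamma_2(t)=\phi(A_t\bm{x})$, combines them with the second-order necessary condition to conclude that the nonnegative quadratic form $\langle(E''(\phi)-\lambda_g\mathcal{I})v,v\rangle$ vanishes at $v=\gamma_i'(0)$, passes from this to the eigen-equation on $T_{\phi}\mathcal{M}$ via the first-order condition for the Rayleigh quotient, and then treats the remaining direction $v=\phi$ by a separate computation. Your route is somewhat more economical: it bypasses the second-order condition and the Rayleigh-quotient detour, and it immediately yields the stronger full-space statement $T_{\phi}\mathcal{S}\subset\ker\left(E''(\phi)-\lambda_g\mathcal{I}\right)$ that the paper only records as an addendum; its price is the need to certify the chain rule, namely the constancy of $\lambda_g$ along the orbit (which you justify, and which the paper also notes) and the $H^1_0$-differentiability of $\beta\mapsto\phi_g(A_{\beta}\bm{x})$, which you correctly tie to \textbf{(A5)} through the generator of the rotation group; that point deserves a short difference-quotient (semigroup-type) argument, but it is no more delicate than the differentiations of $t\mapsto E(\gamma_i(t))$, which implicitly involve $\gamma_i''$, that the paper itself performs. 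In short, both arguments are instances of the standard ``symmetry generators are zero modes of the Hessian'' mechanism; yours differentiates the Euler--Lagrange equation, the paper differentiates the invariances and exploits optimality of every orbit point.
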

Additionally, it follows that {$T_{\phi}\mathcal{S} \subseteq \ker\left(E''(\phi) - \lambda_g \mathcal{I}\right)$}.
\begin{proof} 
	See details in \ref{Appendix_Pro}.
\end{proof}
Therefore, concerning the second-order sufficient condition, the best scenario we can expect is that $T_{\phi}\mathcal{S} = \ker\left(E''(\phi) - \lambda_g \mathcal{I}\right)|_{T_{\phi}\mathcal{M}}$ with $\phi\in\mathcal{S}$. When this condition is met, one calls $E$ a Morse-Bott functional on $\mathcal{S}$ (see \cite{1954Bott,2020L-S,2011An}), i.e., 
\begin{definition}\label{Morse-Bott-P}
	{The energy functional} $E$ is a Morse-Bott functional on {the ground state manifold} $\mathcal{S}$ if for all $\phi\in\mathcal{S}$,
	\begin{align*}
		\ker\left(E''(\phi) - \lambda_g \mathcal{I}\right)|_{T_{\phi}\mathcal{M}} =T_{\phi}\mathcal{S}=\text{\rm span}\big\{i\phi, i\mathcal{L}_z \phi\big\}.
	\end{align*}
\end{definition}
Generally, physical problems often exhibit symmetric structures, which result in degenerate local minimizers, making it challenging to determine the local convergence rate of algorithms. However, according to the following proposition, under the condition that the Morse-Bott property is satisfied, we can relax the requirement for non-degeneracy of local minimizers, thereby enabling us to derive the convergence rate of the algorithm similarly to the non-degenerate case.
\begin{prop}\label{Prop2}
	Assume {\bf (A1)}-{\bf (A5)} and let $E$ be a
	Morse-Bott functional on $\mathcal{S}$. Then, the operator $E''(\phi)-\lambda_g\mathcal{I}$ is coercive on $N_{\phi}\mathcal{M}$ when $\phi\in\mathcal{S}$, i.e., 
	\begin{align*}
		\left\langle(E''(\phi)-\lambda_g\mathcal{I})v,v\right\rangle\ge C\|v\|^2_{H^1}\quad\ for\ all\ v\in N_{\phi}\mathcal{M}.
	\end{align*}
\end{prop}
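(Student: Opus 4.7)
The plan is to argue by contradiction using a standard weak-compactness scheme, exploiting that the top-order piece of $L:=E''(\phi)-\lambda_g\mathcal{I}$ is the Dirichlet form $\tfrac12\|\nabla\cdot\|_{L^2}^2$, while every remaining contribution is a compact perturbation when regarded as a quadratic form on $H^1_0(\mathcal{D})$. The natural target is the scale-invariant inequality $\langle Lv,v\rangle\ge C\|v\|_{H^1}^2$, equivalent by homogeneity to $\inf_{\|v\|_{H^1}=1,\,v\in N_\phi\mathcal{M}}\langle Lv,v\rangle>0$ (stronger than the stated bound, which carries a typo in the exponent). Suppose this fails; extract a sequence $\{v_n\}\subset N_{\phi}\mathcal{M}$ with $\|v_n\|_{H^1}=1$ and $\langle Lv_n,v_n\rangle\to 0$. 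By reflexivity of $H^1_0(\mathcal{D})$ I would pass to a subsequence $v_n\rightharpoonup v$, and Rellich--Kondrachov then gives $v_n\to v$ strongly in $L^2(\mathcal{D})$ and in every subcritical $L^p$.

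I would first check that the weak limit $v$ still lies in $N_\phi\mathcal{M}$: the three $L^2$-orthogonality conditions from \eqref{Orth-Complement} pass to the limit by strong $L^2$ convergence, using $\mathcal{L}_z\phi\in L^2(\mathcal{D})$ from \textbf{(A5)} together with the invariance of $\mathcal{L}_z$ under the group action generating $\mathcal{S}$. Next I would establish the weak lower semicontinuity
\begin{align*}
\langle Lv,v\rangle\le\liminf_{n\to\infty}\langle Lv_n,v_n\rangle=0.
\end{align*}
Only the Dirichlet piece actually requires weak lower semicontinuity; the zero-order contributions containing $V$, $f(\rho_\phi)$, $f'(\rho_\phi)|\phi|^2$, $f'(\rho_\phi)\phi^2\overline{v_n}^2$, and $\lambda_g|v_n|^2$ all converge exactly along the sequence by bounded or Sobolev-embedding-controlled multiplication combined with strong $L^p$ convergence, the polynomial growth in \textbf{(A3)} being precisely the hypothesis that makes these quadratic forms $H^1$-continuous. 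The angular-momentum cross term $-\Omega\,\Re\int\overline{v_n}\mathcal{L}_zv_n$ is treated by the weak-strong product trick: $\mathcal{L}_zv_n$ stays bounded in $L^2$ while $\overline{v_n}\to\overline{v}$ strongly in $L^2$.

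Combined with the second-order necessary condition $\langle Lw,w\rangle\ge 0$ on all of $T_\phi\mathcal{M}$, this forces $\langle Lv,v\rangle=0$. Since $\langle L\cdot,\cdot\rangle$ is a symmetric nonnegative bilinear form on $T_\phi\mathcal{M}$, the Cauchy--Schwarz inequality for seminorms upgrades the null-cone element $v$ to an element of the kernel, that is $v\in\ker L|_{T_\phi\mathcal{M}}$. The Morse--Bott hypothesis (\textbf{Definition~\ref{Morse-Bott-P}}) then identifies $v\in T_\phi\mathcal{S}$, and intersecting with $N_\phi\mathcal{M}$---which is by construction $L^2$-orthogonal to $T_\phi\mathcal{S}$---yields $v=0$. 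Hence $v_n\to 0$ strongly in $L^2(\mathcal{D})$.

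Finally, plugging $v_n\to 0$ in $L^2$ back into $\langle Lv_n,v_n\rangle$ annihilates every lower-order contribution: Cauchy--Schwarz controls the $\mathcal{L}_z$ cross term against $\|v_n\|_{L^2}$, and the Sobolev embedding $H^1_0(\mathcal{D})\hookrightarrow L^q(\mathcal{D})$ together with H\"older bounds the nonlinear quadratic pieces by powers of $\|v_n\|_{L^p}$ that vanish. What survives is $\langle Lv_n,v_n\rangle=\tfrac12\|\nabla v_n\|_{L^2}^2+o(1)$, and $\|v_n\|_{H^1}=1$ with $\|v_n\|_{L^2}\to 0$ forces $\|\nabla v_n\|_{L^2}^2\to 1$, producing the contradiction $\liminf\langle Lv_n,v_n\rangle\ge\tfrac12$. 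The main obstacle I anticipate is the logical step that $\langle Lv,v\rangle=0$ on $T_\phi\mathcal{M}$ forces $v\in\ker L|_{T_\phi\mathcal{M}}$ rather than only a stationarity-type condition on $N_\phi\mathcal{M}$; this resolves because nonnegativity of the form on the whole tangent space lets Cauchy--Schwarz promote $\langle Lv,v\rangle=0$ to $\langle Lv,w\rangle=0$ for every $w\in T_\phi\mathcal{M}$, where the Morse--Bott classification then applies.
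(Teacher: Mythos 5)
Your argument is correct, and it reaches the (intended, squared) coercivity bound by a genuinely different route than the paper. You argue by contradiction: a normalized sequence $\{v_n\}\subset N_{\phi}\mathcal{M}$ with $\langle (E''(\phi)-\lambda_g\mathcal{I})v_n,v_n\rangle\to 0$ has a weak $H^1_0$ limit $v\in N_{\phi}\mathcal{M}$ (the three $L^2$-constraints pass to the limit by Rellich--Kondrachov); weak lower semicontinuity of the Dirichlet part plus exact convergence of the compact lower-order pieces (including the weak--strong treatment of the $\mathcal{L}_z$ cross term) and the second-order necessary condition force $\langle Lv,v\rangle=0$, the Cauchy--Schwarz inequality for the nonnegative form upgrades this to $v\in\ker(E''(\phi)-\lambda_g\mathcal{I})|_{T_\phi\mathcal{M}}=T_\phi\mathcal{S}$ by the Morse--Bott hypothesis, and $T_\phi\mathcal{S}\cap N_\phi\mathcal{M}=\{0\}$ gives $v=0$; then $\|v_n\|_{L^2}\to0$ kills every lower-order term and the surviving Dirichlet energy $\tfrac12\|\nabla v_n\|_{L^2}^2\to\tfrac12$ yields the contradiction. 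The paper instead proceeds directly: it first proves a G\aa rding-type bound $\langle E''(\phi)v,v\rangle\ge \tfrac{C}{2}\|v\|_{H^1}^2-C_\phi\|v\|_{L^2}^2$, then shows by the direct method (minimizing sequence, weak compactness, weak lower semicontinuity) that the Rayleigh quotient attains its infimum $\lambda_3$ on $N_\phi\mathcal{M}$, concludes $\lambda_3>\lambda_g$ from the Morse--Bott property, and finishes with a two-case comparison of $\|v\|_{H^1}$ against $\|v\|_{L^2}$. The ingredients (Rellich--Kondrachov, lower semicontinuity, Morse--Bott exclusion of kernel directions) are the same, but the payoffs differ: your contradiction argument is shorter and avoids the case analysis, at the price of producing only a non-quantitative constant, whereas the paper's route explicitly exhibits the spectral gap $\lambda_3-\lambda_g$, which it reuses later (e.g.\ in \eqref{lambda1} and in the computation of $\mu=(\lambda_3-\lambda_g)/(\lambda_3-\lambda_g+\sigma_0)$ for the optimal preconditioner). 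Your observation that the stated inequality should read $C\|v\|_{H^1}^2$ rather than $C\|v\|_{H^1}$ is also correct; that is how the estimate is used throughout the paper.
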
 
\begin{proof} 
	See details in \ref{Appendix_Pro1}.
\end{proof}
In particular, for the numerical example to be provided later, we have verified that the Gross-Pitaevskii energy functional  indeed qualifies as a Morse-Bott functional.

Finally, for any $\phi\in H_0^1(\mathcal{D})$, the important properties of $E(\phi)$ and $E''(\phi)$ are summarized below. It will be frequently used in the subsequent analysis.
\begin{prop}\label{Property-of-E''}
	Given $\phi\in H_0^1(\mathcal{D})$ and for all $u, v\in H_0^1(\mathcal{D})$, the following conclusions hold:
	\begin{itemize}[label={}, labelsep=0pt, leftmargin=*]
		\item $(i)$ {The second variation} $E''(\phi)$ satisfies the invariance under the following linear group actions
		\begin{align*}
			\left\langle E''(I_{\alpha}^{\beta}\phi)I_{\alpha}^{\beta}v,I_{\alpha}^{\beta}v\right\rangle=\left\langle E''(\phi)v,v\right\rangle \quad for\ all\ \alpha,\beta\in[-\pi, \pi). 
		\end{align*}
		\item $(ii)$ {The second variation} $E''(\phi)$ is a continuous operator on $H_0^1(\mathcal{D})$, i.e.,
		\begin{align*}
			\left|\big\langle E''(\phi)u,v\big\rangle\right|\le C_{\phi}\|u\|_{H^1}\|v\|_{H^1}.
		\end{align*}
		\item $(iii)$ Given $\psi\in H_0^1(\mathcal{D})$, for {$p_0=6/(4-\theta)\in\left[\frac{3}{2},6\right)$}, the following inequality holds
		\begin{align*}
			\left|\left\langle \big(E''(\phi)-E''(\psi)\big)u,v\right\rangle\right|\le C_{\phi,\psi}\|u\|_{H^1}\|v\|_{H^1}\|\phi-\psi\|_{L^{p_0}}.
		\end{align*}
		\item $(iv)$ The following Lipschitz-type inequality holds
		\begin{align*}
			E(\phi+v)-E(\phi)\le \big\langle E'(\phi),v\big\rangle+\frac{1}{2}\big\langle E''(\phi)v,v\big\rangle+C_{\phi,v}\|v\|^3_{H^1}.
		\end{align*}
	\end{itemize}
\end{prop}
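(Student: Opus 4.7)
The plan is to handle the four assertions in sequence, each reducing to standard manipulations once $\langle E''(\phi)u,v\rangle$ is written out explicitly as the sum of the kinetic, $V$, $\Omega\mathcal{L}_z$, $f(\rho_\phi)$, and $f'(\rho_\phi)(|\phi|^2+\phi^2\overline{\,\cdot\,})$ blocks.

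For (i), I would insert $I_\alpha^\beta\phi$ into each block and apply the change of variables $\bm{y}=A_\beta\bm{x}$: the Jacobian is unit, the kinetic form is $O(d)$-invariant, the potential is invariant by (A2), $|\phi|^2$ is radially preserved, and $\mathcal{L}_z$ commutes with rotations about the $z$-axis (so $\mathcal{L}_z(w(A_\beta\bm{x}))=(\mathcal{L}_z w)(A_\beta\bm{x})$), while the global phase $e^{i\alpha}$ cancels in every Hermitian pairing. For (ii), the kinetic, $V$, and $\Omega\mathcal{L}_z$ blocks are bounded by $C\|u\|_{H^1}\|v\|_{H^1}$ via Cauchy--Schwarz together with $V\in L^\infty$ and $|\mathcal{L}_z u|\le C_\mathcal{D}|\nabla u|$ on the bounded domain $\mathcal{D}$. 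For the two nonlinear pieces I would extract the growth bounds $|f(s^2)|\le Cs^{\theta+2}$ and $|f'(s^2)s^2|\le Cs^{\theta+1}$ from (A3) by setting one variable to zero; a H\"older estimate together with the Sobolev embedding $H^1\hookrightarrow L^p$ (valid for $p\in[2,6]$ when $d\le 3$) then supplies a constant $C_\phi$ depending monotonically on $\|\phi\|_{H^1}$.

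Part (iii) is the technical heart. The kinetic, $V$, and $\mathcal{L}_z$ blocks cancel between $E''(\phi)$ and $E''(\psi)$, so only the nonlinear differences survive. The aim is the pointwise inequality
\[
\left|f(|\phi|^2)-f(|\psi|^2)\right| + \left|f'(|\phi|^2)|\phi|^2-f'(|\psi|^2)|\psi|^2\right| + \left|f'(|\phi|^2)\phi^2-f'(|\psi|^2)\psi^2\right| \le C(|\phi|+|\psi|)^\theta|\phi-\psi|.
\]
The first two summands follow directly from (A3) together with $f(u^2)-f(v^2)=\int_v^u 2sf'(s^2)\,ds$ and $|sf'(s^2)|\le Cs^\theta$. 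The third is the subtle one: writing $G(\phi)=f'(|\phi|^2)\phi^2=h(|\phi|)\,(\phi/|\phi|)^2$ with $h(r)=f'(r^2)r^2$ and $|h(r)|\le Cr^{\theta+1}$, a triangle inequality combined with the elementary bound $|\phi/|\phi|-\psi/|\psi||\le 2|\phi-\psi|/|\psi|$ yields the estimate after the $r^{\theta+1}$-vanishing of $h$ absorbs the $1/|\psi|$ singularity of the unit-vector difference. Plugging the pointwise inequality into the integrals and applying four-factor H\"older with exponents $(p_0,6/\theta,6,6)$ together with $H^1\hookrightarrow L^6$ then closes (iii); the exponent identity $1/p_0+\theta/6+2/6=1$ forces exactly $p_0=6/(4-\theta)$, which lies in $[3/2,6)$ thanks to $\theta\in[0,3)$.

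For (iv), I would apply the integral form of Taylor's remainder,
\[
E(\phi+v)-E(\phi)-\langle E'(\phi),v\rangle-\tfrac12\langle E''(\phi)v,v\rangle = \int_0^1(1-t)\big\langle(E''(\phi+tv)-E''(\phi))v,v\big\rangle\,dt,
\]
bound the integrand via (iii) by $tC_{\phi,v}\|v\|^2_{H^1}\|v\|_{L^{p_0}}$, and conclude with $\|v\|_{L^{p_0}}\le C\|v\|_{H^1}$ (which holds since $p_0<6$). The only real obstacle in the whole argument is the pointwise Lipschitz estimate for $G$ in (iii): the phase map $\phi\mapsto(\phi/|\phi|)^2$ is not itself Lipschitz, and one must exploit the vanishing rate of $h$ at the origin to absorb the singularity; once this inequality is secured, the exponent bookkeeping that pins down $p_0=6/(4-\theta)$ is routine.
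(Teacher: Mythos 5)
Your plan follows essentially the same route as the paper's own proof: the decomposition in (iii) of $f'(\rho_\phi)\phi^2\overline{\,\cdot\,}$ into a modulus factor $h(|\phi|)=f'(|\phi|^2)|\phi|^2$ times a unit phase, with the pointwise bound $\bigl|\phi/|\phi|-\psi/|\psi|\bigr|\le 2|\phi-\psi|/|\psi|$ absorbed by the $|\psi|^{1+\theta}$ prefactor, is exactly the paper's estimate \eqref{f'-continu}, and your H\"older bookkeeping $1/p_0+\theta/6+2/6=1$ reproduces \eqref{E''-continu}; your single-integral Taylor remainder in (iv) is the paper's double-integral argument \eqref{Order3} in equivalent form; for (i) the paper instead differentiates the invariance identity $E(I_\alpha^\beta(\phi+tv))\equiv E(\phi+tv)$ twice in $t$, while you verify invariance block by block via the change of variables, but the underlying facts (rotational symmetry of $\mathcal{D}$ and $V$, commutation of $\mathcal{L}_z$ with $A_\beta$, cancellation of the phase in the real pairings) are the same, so this is only a presentational difference.

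One slip in (ii) should be corrected. From \textbf{(A3)} with $v=0$ you get $|f'(s^2)s^2|\le Cs^{1+\theta}$, hence $|2tf'(t^2)|\le Ct^{\theta}$ and, integrating $f(s^2)=\int_0^s 2t f'(t^2)\,\mathrm{d}t$, the growth bound is $|f(s^2)|\le Cs^{1+\theta}$, not $Cs^{\theta+2}$ as you wrote. The bound $Cs^{\theta+2}$ is not a consequence of \textbf{(A3)} near $s=0$ (e.g.\ $f(\rho)=\rho^{(1+\theta)/2}$ satisfies \textbf{(A3)} but behaves like $s^{1+\theta}$), and if one used a growth of order $s^{\theta+2}$ at infinity, the H\"older step would require $\phi\in L^{3(\theta+2)/2}$ with $3(\theta+2)/2>6$ for $\theta>2$, so the embedding $H^1_0(\mathcal{D})\hookrightarrow L^6(\mathcal{D})$ would no longer close the estimate for $\theta\in(2,3)$. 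With the corrected exponent $1+\theta$ (which is what the paper uses, bounding both nonlinear blocks by $C\|\phi\|_{L^6}^{1+\theta}\|u\|_{H^1}\|v\|_{H^1}$), your argument for (ii), and with it the rest of the proposal, goes through.
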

\begin{proof}
	The proofs of these conclusions are straightforward, and are provided in \ref{Appendix-A} for completeness.
\end{proof}

\section{Preconditioned Riemannian gradient methods}\label{Sec3}

In this section, we first review the Riemannian geometric structure of the problem, and then propose the generalized preconditioned Riemannian gradient methods.
\subsection{Riemannian Geometry structure of the problem}
{Firstly, we recall some basic concepts and formulas, namely, Riemannian metrics, orthogonal projections, Riemannian gradients, and retractions, as systematically developed for finite-dimensional Riemannian manifolds in \cite{N.B.appear}. 
In the infinite-dimensional Hilbert manifold setting of this work, these notions are defined by direct analogy; the underlying differential-geometric framework is standard and can be found, for instance, in \cite{1988Manifold, 1995Lang}.}

For the Riemannian manifold $\mathcal{M}$, the Riemannian metric $g_{\phi}(\cdot,\cdot):T_{\phi}\mathcal{M}\times T_{\phi}\mathcal{M}\to\mathbb{R}$ is the restriction of a complete inner product $(\cdot,\cdot)_X$ on $H_0^1(\mathcal{D})$ to $T_{\phi}\mathcal{M}$, i.e.,
\begin{align*}
	g_{\phi}(u,v):=(u,v)_{X}|_{{{T_{\phi}\mathcal{M}}}}\quad \text{for all}\ u,v\in T_{\phi}\mathcal{M}.
\end{align*}
The performance of gradient-based optimization methods in a Hilbert space depends on the metric, making the choice of $(\cdot,\cdot)_X$ critical (see \cite{2010Sobolev}). In this work, we propose utilizing a preconditioner $\mathcal{P}_{\phi}$, {defined for each $\phi \in H_0^1(\mathcal{D})$ as a symmetric and coercive real linear operator from $H_0^1(\mathcal{D})$ to $H^{-1}(\mathcal{D})$}, to define the inner product as described in \eqref{Bilinear-operator}. In the optimization theory, a well-known strategy to enhance the convergence rate of gradient-based methods is applying a suitable preconditioner. The preconditioner should approximate the Hessian operator of the objective functional as closely as possible. Consequently, $\mathcal{P}_{\phi}$ is assumed to meet the following condition:

\medskip

%\begin{itemize}[label={}, labelsep=0pt, leftmargin=*]
\noindent \quad {\bf (A6)} Given $\phi\in H_0^1(\mathcal{D})$ and for all $u,v\in H_0^1(\mathcal{D})$, $\mathcal{P}_{\phi}$ satisfies:

\smallskip

\begin{itemize}%[label={}, labelsep=0pt, leftmargin=*]
	\item[$(i)$] {The preconditioner} $\mathcal{P}_{\phi}$ satisfies the invariance under the following linear group actions
	\begin{align*}
		\left\langle \mathcal{P}_{I_{\alpha}^{\beta}\phi}I_{\alpha}^{\beta}v,I_{\alpha}^{\beta}v\right\rangle=\left\langle \mathcal{P}_{\phi}v,v\right\rangle \quad for\ all\ \alpha,\beta\in[-\pi, \pi). 
	\end{align*}
	\item[$(ii)$] {The preconditioner} $\mathcal{P}_{\phi}$ is coercive and continuous on $H_0^1(\mathcal{D})$, i.e.,
	\begin{align*}
		\left\langle  \mathcal{P}_{\phi}v,v\right\rangle\ge C\|v\|^2_{H^1}\quad\text{and}\quad\left\langle \mathcal{P}_{\phi}u,v\right\rangle\le C_{\phi}\|u\|_{H^1}\|v\|_{H^1}.
	\end{align*}
	\item[$(iii)$] Given $\psi\in H_0^1(\mathcal{D})$, for a constant {$1\le p_1<6$}, the following inequality holds
	\begin{align*}
		\left|\left\langle \big(\mathcal{P}_{\phi}-\mathcal{P}_{\psi}\big)u,v\right\rangle\right|\le C_{\phi,\psi}\|u\|_{H^1}\|v\|_{H^1}\|\phi-\psi\|_{L^{p_1}}.
	\end{align*}
	\item[$(iv)$] {The preconditioner} $\mathcal{P}_{\phi}$ satisfies the following inequality:
	\begin{align*}
		\left\|\mathcal{P}_{\phi}^{-1}\left(E''(\phi)-\mathcal{P}_{\phi}\right)v\right\|_{H^1}\le C_{\phi}\|v\|_{L^{p_2}} \quad \text{for a constant}\ {1\le p_2<6}.
	\end{align*}
\end{itemize}
%\end{itemize}

For the inner product {$(v, w)_{\mathcal{P}_{\phi}} := \langle \mathcal{P}_{\phi} v, w \rangle$ for all $v,w\in H_0^1(\mathcal{D})$}, the $\mathcal{P}_{\phi}$-orthogonal projection operator $\text{Proj}^{\mathcal{P}_{\phi}}_{\phi}: H_0^1(\Omega)\to T_{\phi}\mathcal{M}$ is defined as: for all $v\in T_{\phi}\mathcal{M}$
\begin{align}\label{Proj-P}
	\text{Proj}_{\phi}^{\mathcal{P}_{\phi}}(v)=v-\frac{(\phi,v)_{L^2}}{\big(\phi,\mathcal{P}_{\phi}^{-1}\mathcal{I}\phi\big)_{L^2}}\mathcal{P}_{\phi}^{-1}\mathcal{I}\phi.
\end{align}

Confined to the inner product $(\cdot,\cdot)_{\mathcal{P}_{\phi}}$ and the orthogonal projection $\text{Proj}_{\phi}^{\mathcal{P}_{\phi}}$, we give the formula of the Riemannian gradient $\nabla_{\mathcal{P}}^{\mathcal{R}} E(\phi)$ as follows:
\begin{align}\label{Riem-Grad}
	\nabla_{{\mathcal{P}}}^{\mathcal{R}} E(\phi)&=\text{Proj}_{\phi}^{\mathcal{P}_{\phi}}\nabla_{\mathcal{P}}E(\phi)=\mathcal{P}_{\phi}^{-1}\mathcal{H}_{\phi}\phi-\lambda_{\phi}\mathcal{P}_{\phi}^{-1}\mathcal{I}\phi,\quad
	\lambda_{\phi}=\frac{\big(\phi,\mathcal{P}_{\phi}^{-1}\mathcal{H}_{\phi}\phi\big)_{L^2}}{\big(\phi,\mathcal{P}_{\phi}^{-1}\mathcal{I}\phi\big)_{L^2}}.
\end{align}

Finally, according to the following normalized retraction $\mathfrak{R}_{\phi}(tv)$ \cite{N.B.appear}:
\begin{align}\label{Retraction}
	\mathfrak{R}_{\phi}(tv):=(\phi+tv)/\big\|\phi+tv\big\|_{L^2}\quad\text{for all}\ v\in T_{\phi}\mathcal{M},
\end{align}
the Riemannian gradient method forces all the iterates  to stay on $\mathcal{M}$.

\subsection{Algorithms}
With these preparations, we begin to give the algorithms. Provided with an inner product $(\cdot,\cdot)_{\mathcal{P}_{\phi}}$ (or preconditioner $\mathcal{P}_{\phi}$), an descent direction $d_n$, and the corresponding step size $\tau_n$, the preconditioned Riemannian gradient method can be formulated as an iterative sequence by \eqref{Riem-Grad} and \eqref{Retraction}:
\begin{align}\label{Riem-Opt-menthod}
	\phi^{n+1}=\mathfrak{R}_{\phi^n}(\tau_n d_n)=\frac{\phi^n+\tau_n d_n}{\quad\big\|\phi^n+\tau_n d_n\big\|_{L^2}}\quad\text{with}\quad d_n=-\nabla_{\mathcal{P}}^{\mathcal{R}}E(\phi^n).
\end{align}
Depending on the different choices of the preconditioner $\mathcal{P}_{\phi}$, descent direction $d_n$, and step size $\tau_n$, a variety of algorithms can be derived. In this paper, we do not specify the particular form of the preconditioner but provide a theoretical analysis for preconditioners that satisfy the general form outlined {\bf (A6)}. This theoretical analysis will be detailed in Section \ref{Sec4}.
Moreover, in practical computations, the step size $\tau_n$ is typically determined using either an exact line search or a backtracking line search method (see \cite{2017Efficient,2006Numerical}). Furthermore, if $E\bigl(\mathfrak{R}_{\phi^n}(\tau d_n)\bigr)$ is a rational function of $\tau$, then both backtracking and exact line search can be carried out efficiently (see \cite{2024Convergence}). In addition to these line search-based approaches, adaptive step size strategies, such as the Barzilai-Borwein step size method, which is a special nonmonotone gradient descent method \cite{1988BB}, are widely used to accelerate gradient-type optimization algorithms.

\begin{remark}\label{Proj-Sobo-Grad}
	Different preconditioners can lead to various types of algorithms, such as the $L^2$-projected gradient method \cite{2021Normalized} and a series of projected Sobolev gradient methods  \cite{2024On,2010A,2017Computation,2020Sobolev,2023The,2024Convergence,2010Tackling,2022Exp}. All these methods can be encompassed within the framework of \eqref{Riem-Opt-menthod}, with the respective preconditioners being \(\mathcal{P}_{\phi} = \mathcal{I},\ a\mathcal{I} - \frac{1}{2}\Delta\), \(a\mathcal{I} - \frac{1}{2}\Delta + V(\bm{x})\), \(a\mathcal{I} + \mathcal{H}_0\), and \(a\mathcal{I} + \mathcal{H}_{\phi}\) for all \(a \geq 0\). {In particular, the latter four are preconditioners that satisfy {\bf (A6)}. Beyond the preconditioned Riemannian gradient methods, there are other works that combine preconditioning techniques with the framework of Riemannian optimization \cite{2026JCAM,2024Riem,2017Efficient,2017Computation}}.
\end{remark}

Based on these assumptions, for the preconditioner $\mathcal{P}_{\phi}$, the Riemannian gradient $\nabla_{\mathcal{P}}^{\mathcal{R}}E(\phi)$, and the normalized retraction, we have the following properties.
\begin{prop}\label{Property-P}
	Assume {\rm \textbf{(A1)}}-{\rm \textbf{(A6)}}. Given $\phi\in H_0^1(\mathcal{D})$ and for all $u, v\in H_0^1(\mathcal{D})$ and $w\in H^{-1}(\mathcal{D})$, the following conclusions hold:
	\begin{itemize}[label={}, labelsep=0pt, leftmargin=*]
		\item $(i)$ If $E$ is a
		Morse-Bott functional on $\mathcal{S}$, then for all $\phi\in\mathcal{S}$, $\mathcal{P}_{\phi}$ and $E''(\phi)-\lambda_g\mathcal{I}$ satisfy the spectral equivalence, i.e.,
		\begin{align}\label{Mu-L}
			\inf_{v\in N_{\phi}\mathcal{M}\backslash\{0\}}\frac{\big\langle \big(E''(\phi)-\lambda_g\mathcal{I}\big)v,v\big\rangle}{\big\langle\mathcal{P}_{\phi}v,v\big\rangle}=\mu>0,\;\sup_{v\in T_{\phi}\mathcal{M}\backslash\{0\}}\frac{\big\langle \big(E''(\phi)-\lambda_g\mathcal{I}\big)v,v\big\rangle}{\big\langle\mathcal{P}_{\phi}v,v\big\rangle}=L<\infty.
		\end{align}
		\item $(ii)$ {The operator} $\mathcal{P}^{-1}_{\phi}\mathcal{H}_{\phi}:H_0^1(\mathcal{D})\to H_0^1(\mathcal{D})$ is a bounded linear operator, i.e., 
		\begin{align*}
			\big\|\mathcal{P}^{-1}_{\phi}\mathcal{H}_{\phi}v\big\|_{H^1}\le C_{\phi}\|v\|_{H^{1}}.
		\end{align*}
		Furthermore, $\mathcal{P}_{\phi}^{-1}(\mathcal{H}_{\phi}-\mathcal{P}_{\phi})$ satisfies the following estimate:
		\begin{align*}
			\big\|\mathcal{P}_{\phi}^{-1}(\mathcal{H}_{\phi}-\mathcal{P}_{\phi})v\big\|_{H^1}\le C_{\phi}\|v\|_{L^{p}}\quad\text{with}\quad {p=\max\{p_0,p_2\}\in[1,6)}.
		\end{align*}
		\item $(iii)$ Let $\phi\in\mathcal{M}$, there exists $\sigma$ such that for all $\psi\in\mathcal{B}_{\sigma}(\phi)$, the operator $\nabla^{\mathcal{R}}_{\mathcal{P}} E(\cdot):H_0^1(\mathcal{D})\to H_0^1(\mathcal{D})$ and the functional $\lambda_{(\cdot)}:H^1_0(\mathcal{D})\to \mathbb{R}$ are local Lipschitz continuous at $\phi$, i.e., 
		\begin{align*}
			\big\|\nabla^{\mathcal{R}}_{\mathcal{P}} E(\phi)-\nabla^{\mathcal{R}}_{\mathcal{P}} E(\psi)\big\|_{H^1}\le C_{\phi}\|\phi-\psi\|_{H^1}\quad and\quad \big|\lambda_{\phi}-\lambda_{\psi}\big|\le C_{\phi}\|\phi-\psi\|_{L^p},
		\end{align*}
		where $p=\max\{p_0,p_1,p_2,2\}\in[1,6)$.
		\item $(iv)$ Let $\phi\in\mathcal{M}$, for all $v\in T_{\phi}\mathcal{M}$, it holds that
		\begin{align*}
			\big|\mathfrak{R}_{\phi}(tv)-(\phi+tv)\big|\le\frac{1}{2}t^2\|v\|^2_{L^2}|\phi+tv|.
		\end{align*}
	\end{itemize}
\end{prop}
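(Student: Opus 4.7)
The plan is to address the four items in order, invoking Proposition~\ref{Property-of-E''}, Proposition~\ref{Prop2}, and assumption {\bf (A6)} as needed. For part (i), Proposition~\ref{Prop2} delivers the coercivity $\langle(E''(\phi)-\lambda_g\mathcal{I})v,v\rangle\ge C\|v\|_{H^1}^2$ on $N_\phi\mathcal{M}$, while Proposition~\ref{Property-of-E''}(ii) and {\bf (A6)}(ii) supply matching upper bounds $C_\phi\|v\|_{H^1}^2$ and two-sided $H^1$-control of $\langle\mathcal{P}_\phi v,v\rangle$. Dividing gives a positive, finite pair $\mu,L$; the invariance properties in Proposition~\ref{Property-of-E''}(i) and {\bf (A6)}(i), together with the fact that $I_\alpha^\beta$ maps $N_\phi\mathcal{M}$ bijectively onto $N_{I_\alpha^\beta\phi}\mathcal{M}$, show these ratios are independent of the choice of $\phi\in\mathcal{S}$.

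For part (ii), I first bound $\|\mathcal{H}_\phi v\|_{H^{-1}}\le C_\phi\|v\|_{H^1}$ via {\bf (A3)} and Sobolev embeddings, then compose with $\|\mathcal{P}_\phi^{-1}\|_{H^{-1}\to H_0^1}\le C^{-1}$ from coercivity of $\mathcal{P}_\phi$. For the refined estimate, I decompose
\begin{align*}
\mathcal{P}_\phi^{-1}(\mathcal{H}_\phi-\mathcal{P}_\phi)v=\mathcal{P}_\phi^{-1}(\mathcal{H}_\phi-E''(\phi))v+\mathcal{P}_\phi^{-1}(E''(\phi)-\mathcal{P}_\phi)v.
\end{align*}
The second summand is handled directly by {\bf (A6)}(iv). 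For the first, the explicit identity $\langle(E''(\phi)-\mathcal{H}_\phi)u,w\rangle=(f'(\rho_\phi)(|\phi|^2+\phi^2\overline{\cdot})u,w)_{L^2}$ combined with the growth condition in {\bf (A3)} and Sobolev embeddings yields $\|(E''(\phi)-\mathcal{H}_\phi)v\|_{H^{-1}}\le C_\phi\|v\|_{L^{p_0}}$, and coercivity of $\mathcal{P}_\phi$ absorbs the inverse, producing the claim with $p=\max\{p_0,p_2\}$.

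Part (iii) is the main technical burden. The standard Lipschitz bounds for $\nabla_\mathcal{P}^\mathcal{R}E(\cdot)$ and $\lambda_{(\cdot)}$ follow by inserting mixed intermediate terms and combining {\bf (A6)}(iii), Proposition~\ref{Property-of-E''}(iii), and part (ii); the eigenvalue estimate further uses the factorization
\begin{align*}
\lambda_\phi-\lambda_\psi=\frac{(\phi,\mathcal{P}_\phi^{-1}\mathcal{H}_\phi\phi)_{L^2}-(\psi,\mathcal{P}_\psi^{-1}\mathcal{H}_\psi\psi)_{L^2}}{(\phi,\mathcal{P}_\phi^{-1}\mathcal{I}\phi)_{L^2}}+\lambda_\psi\,\frac{(\psi,\mathcal{P}_\psi^{-1}\mathcal{I}\psi)_{L^2}-(\phi,\mathcal{P}_\phi^{-1}\mathcal{I}\phi)_{L^2}}{(\phi,\mathcal{P}_\phi^{-1}\mathcal{I}\phi)_{L^2}},
\end{align*}
whose denominators are bounded below by coercivity and whose numerators reduce to symmetric $L^2$-pairings absorbing one $L^p$-factor of $\phi-\psi$. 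The hard part is the sharper Lipschitz estimate for $\nabla_\mathcal{P}^\mathcal{R}E(\phi)-\phi$, whose right-hand side must be an $L^p$-norm rather than an $H^1$-norm, so the leading $H^1$-scaling carried by $\phi$ has to cancel. The key identity is
\begin{align*}
\nabla_\mathcal{P}^\mathcal{R}E(\phi)-\phi=\mathcal{P}_\phi^{-1}(\mathcal{H}_\phi-\mathcal{P}_\phi)\phi-\lambda_\phi\mathcal{P}_\phi^{-1}\mathcal{I}\phi,
\end{align*}
after which part (ii) controls the difference of first summands in $L^p$ once {\bf (A6)}(iii) is used to swap the base point of $\mathcal{P}_\phi^{-1}$. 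For the second summand, I telescope
\begin{align*}
\lambda_\phi\mathcal{P}_\phi^{-1}\mathcal{I}\phi-\lambda_\psi\mathcal{P}_\psi^{-1}\mathcal{I}\psi=(\lambda_\phi-\lambda_\psi)\mathcal{P}_\phi^{-1}\mathcal{I}\phi+\lambda_\psi\mathcal{P}_\phi^{-1}\mathcal{I}(\phi-\psi)+\lambda_\psi(\mathcal{P}_\phi^{-1}-\mathcal{P}_\psi^{-1})\mathcal{I}\psi,
\end{align*}
and exploit the $L^p$-Lipschitz bound on $\lambda_{(\cdot)}$ already proved, the continuity of $\mathcal{P}_\phi^{-1}\mathcal{I}:L^p\to H_0^1$, and the resolvent identity $(\mathcal{P}_\phi^{-1}-\mathcal{P}_\psi^{-1})w=\mathcal{P}_\phi^{-1}(\mathcal{P}_\psi-\mathcal{P}_\phi)\mathcal{P}_\psi^{-1}w$ together with {\bf (A6)}(iii) to finish.

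For part (iv), since $v\in T_\phi\mathcal{M}$ satisfies $(\phi,v)_{L^2}=0$ and $\|\phi\|_{L^2}=1$, one has $\|\phi+tv\|_{L^2}^2=1+t^2\|v\|_{L^2}^2$, whence
\begin{align*}
\mathfrak{R}_\phi(tv)-(\phi+tv)=(\phi+tv)\Bigl(\tfrac{1}{\sqrt{1+t^2\|v\|_{L^2}^2}}-1\Bigr),
\end{align*}
and the elementary inequality $1-(1+x)^{-1/2}\le x/2$ for $x\ge 0$ yields the stated pointwise bound.
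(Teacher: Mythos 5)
Your parts (i), (ii) and (iv) follow the paper's own argument almost verbatim (coercivity from Proposition~\ref{Prop2} plus the invariance of $E''$ and $\mathcal{P}$ under $I_\alpha^\beta$ for (i); the splitting of $\mathcal{H}_\phi-\mathcal{P}_\phi$ through $E''(\phi)$ with {\bf (A6)}-$(iv)$ for (ii); the exact normalization identity for (iv)), and your treatment of $\nabla_\mathcal{P}^{\mathcal R}E(\phi)-\phi$ via the identity $\nabla_\mathcal{P}^{\mathcal R}E(\phi)-\phi=\mathcal{P}_\phi^{-1}(\mathcal{H}_\phi-\mathcal{P}_\phi)\phi-\lambda_\phi\mathcal{P}_\phi^{-1}\mathcal{I}\phi$, the resolvent identity and {\bf (A6)}-$(iii)$ is equivalent to the paper's decomposition \eqref{four-part} with the $(\phi-\psi)$ term removed, i.e.\ to \eqref{L-H1}. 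The one place where your plan, as stated, does not go through is the $L^p$-Lipschitz bound for $\lambda_{(\cdot)}$, which you prove \emph{before} any sharper estimate is available: in your quotient factorization the numerator $(\phi,\mathcal{P}_\phi^{-1}\mathcal{H}_\phi\phi)_{L^2}-(\psi,\mathcal{P}_\psi^{-1}\mathcal{H}_\psi\psi)_{L^2}$ does not "reduce to symmetric $L^2$-pairings absorbing one $L^p$-factor": after the natural telescoping one is left with a term of the form $\big(\psi,\mathcal{P}_\psi^{-1}\mathcal{H}_\psi(\phi-\psi)\big)_{L^2}$ (equivalently $\langle\mathcal{H}_\psi(\phi-\psi),\mathcal{P}_\psi^{-1}\mathcal{I}\psi\rangle$), and since $\mathcal{H}_\psi$ only maps $H_0^1$ to $H^{-1}$, symmetry alone prices this term at $\|\phi-\psi\|_{H^1}$, not $\|\phi-\psi\|_{L^p}$. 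The fix is exactly the cancellation the paper builds in: write $\mathcal{P}_\psi^{-1}\mathcal{H}_\psi(\phi-\psi)=(\phi-\psi)+\mathcal{P}_\psi^{-1}(\mathcal{H}_\psi-\mathcal{P}_\psi)(\phi-\psi)$, use your part (ii) for the second piece and pair the first piece in $L^2$; the paper achieves the same effect by inserting $(\phi,\phi)_{L^2}$, $(\psi,\psi)_{L^2}$ into its identity for $\lambda_\phi-\lambda_\psi$ and invoking the pre-established estimate \eqref{L-H1} on $\mathcal{P}_\phi^{-1}\mathcal{H}_\phi\phi-\phi$. With that insertion your ordering (first $\lambda$, then the gradient bound) is legitimate and not circular; also note that the uniform positivity of the denominators $(\psi,\mathcal{P}_\psi^{-1}\mathcal{I}\psi)_{L^2}$ for $\psi\in\mathcal{B}_\sigma(\phi)$ — the paper's \eqref{Positive-denominator}, and the reason the statement is only local — deserves an explicit line rather than the blanket appeal to coercivity.
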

\begin{proof}
	See details in \ref{Appendix-B}.
\end{proof}

\section{Convergence analysis}\label{Sec4} 
In this section, all the analysis results are based on assumptions {\bf (A1)}-{\bf (A6)}, we first give the convergence results of the algorithm, and then prove these theoretical results. The results are as follows.
\subsection{Main results}
\begin{thm}\label{Global-Convergence}
	There exists a constant $\tau_{\max}>0$ that depends on the initial function $\phi^0$ such that for any $\tau_n\in (0,\tau_{\max})$, the iterations $\{\phi^n\}_{n\in\mathbb{N}}$
	generated by the P-RG have the following properties:
	%	\begin{itemize}[label={},left=0pt]
		\item $(i)$ It holds the sufficient descent condition, i.e., the energy is diminishing,
		{\rm\begin{align*} 
				E(\phi^{n+1})-E(\phi^n)\le
				-C_{\tau_n}\left\|d_n \right\|^2_{\mathcal{P}_{\phi^n}}\quad for\ all\ n\ge0
		\end{align*}}
		with a constant $C_{\tau_n}\ge\tau_n-\tau_n^2/\tau_{\max}$. So, the energy sequence $\{E(\phi^n)\}_{n\in\mathbb{N}}$ converges: 
		\[
		E_g:=\lim\limits_{n\to\infty}E(\phi^n).
		\]
		\item $(ii)$ There exists a subsequence $\{\phi^{n_j}\}_{j\in\mathbb{N}}$ and $\phi_g\in\mathcal{M}$ such that
		\begin{align*}
			\lim\limits_{j\to\infty}\|\phi^{n_j}-\phi_g\|_{H^1}=0.
		\end{align*}
		Furthermore, $\phi_g$ satisfies the first-order necessary condition, i.e.,
		{\rm\begin{align*}
				\lambda_{\phi_g}=\lim\limits_{j\to\infty}\lambda_{\phi^{n_j}}=\lambda_g=\big\langle\mathcal{H}_{\phi_g}\phi_g,\phi_g\big\rangle\quad and \quad  \mathcal{H}_{\phi_g}\phi_g=\lambda_g\mathcal{I}\phi_g.
		\end{align*}}
		%	\end{itemize}
\end{thm}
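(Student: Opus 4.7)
For (i), the plan is a Taylor expansion combined with the retraction error bound. Apply the Lipschitz-type inequality of \textbf{Proposition~2.3}(iv) to $\phi=\phi^n$ and $v=\phi^{n+1}-\phi^n$. Using \textbf{Proposition~3.1}(iv) together with the iteration rule \eqref{Riem-Opt-menthod}, write $\phi^{n+1}-\phi^n=\tau_n d_n+r_n$ where the retraction error satisfies $\|r_n\|_{H^1}\le C\tau_n^2\|d_n\|^2_{L^2}$. The decisive computation is the linear term $\tau_n\langle E'(\phi^n),d_n\rangle$; since $d_n\in T_{\phi^n}\mathcal{M}$ gives $(\phi^n,d_n)_{L^2}=0$, combining \eqref{Riem-Grad} with $E'(\phi^n)=\mathcal{H}_{\phi^n}\phi^n$ yields the identity $\langle E'(\phi^n),d_n\rangle=-\|d_n\|^2_{\mathcal{P}_{\phi^n}}$. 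The quadratic and cubic remainders are then $\mathcal{O}(\tau_n^2\|d_n\|^2_{H^1})$ by \textbf{Proposition~2.3}(ii) and the $\mathcal{P}_{\phi^n}$-to-$H^1$ norm equivalence from (A6)(ii). Choosing $\tau_{\max}$ so that the quadratic contribution is absorbed yields $E(\phi^{n+1})-E(\phi^n)\le -(\tau_n-\tau_n^2/\tau_{\max})\|d_n\|^2_{\mathcal{P}_{\phi^n}}$; since (A4) makes $E$ bounded below on $\mathcal{M}$, the resulting monotone sequence $\{E(\phi^n)\}$ converges to some $E_g$.

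For (ii), I would telescope the descent to obtain
\begin{equation*}
\sum_{n=0}^{\infty}C_{\tau_n}\|d_n\|^2_{\mathcal{P}_{\phi^n}}\le E(\phi^0)-E_g<\infty,
\end{equation*}
which together with the $\mathcal{P}_\phi$--$H^1$ equivalence gives $\|d_n\|_{H^1}\to 0$. Coercivity coming from (A4) and the uniform upper bound $E(\phi^n)\le E(\phi^0)$ yield an a priori $H^1$-bound on $\{\phi^n\}$; by the monotone dependence of the constants $C_{\phi,\dots}$ in Section~2, this in turn legitimizes the use of a single $\tau_{\max}$ that depends only on $\phi^0$. Reflexivity of $H^1_0(\mathcal{D})$ together with the Rellich--Kondrachov compact embedding $H^1_0(\mathcal{D})\hookrightarrow\hookrightarrow L^p(\mathcal{D})$ for $p<6$ then produces a subsequence $\phi^{n_j}\rightharpoonup\phi_g$ weakly in $H^1$ and strongly in every such $L^p$, and the local Lipschitz estimate of \textbf{Proposition~3.1}(iii) gives $\lambda_{\phi^{n_j}}\to\lambda_g$.

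The main obstacle is upgrading weak to strong $H^1$-convergence, since only then can one close the eigenvalue equation at the limit. My plan is to recast the identity $\mathcal{P}_{\phi^{n_j}}d_{n_j}=-\bigl(\mathcal{H}_{\phi^{n_j}}\phi^{n_j}-\lambda_{\phi^{n_j}}\mathcal{I}\phi^{n_j}\bigr)$ as the elliptic relation
\begin{equation*}
-\tfrac{1}{2}\Delta\phi^{n_j}=\lambda_{\phi^{n_j}}\phi^{n_j}-V\phi^{n_j}+\Omega\mathcal{L}_z\phi^{n_j}-f(\rho_{\phi^{n_j}})\phi^{n_j}+\mathcal{P}_{\phi^{n_j}}d_{n_j}
\end{equation*}
in $H^{-1}(\mathcal{D})$. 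Each term on the right converges strongly in $H^{-1}$: the linear multiplicative terms and the nonlinear term by compact embedding and the growth condition (A3), the rotation term thanks to (A5) and compactness, and $\mathcal{P}_{\phi^{n_j}}d_{n_j}\to 0$ by (A6)(ii) and $\|d_{n_j}\|_{H^1}\to 0$. Inverting $-\tfrac{1}{2}\Delta$ with homogeneous Dirichlet data then yields $\phi^{n_j}\to\phi_g$ strongly in $H^1_0(\mathcal{D})$; in particular $\|\phi_g\|_{L^2}=1$, so $\phi_g\in\mathcal{M}$. Passing to the limit in the identity gives $\mathcal{H}_{\phi_g}\phi_g=\lambda_g\mathcal{I}\phi_g$ and, by continuity of the quadratic form, $\lambda_g=\langle\mathcal{H}_{\phi_g}\phi_g,\phi_g\rangle$, concluding the proof.
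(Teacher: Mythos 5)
Your part (i) is essentially the paper's own argument: the same decomposition $\phi^{n+1}-\phi^n=\tau_n d_n+r_n$ with the retraction error controlled by \textbf{Proposition~\ref{Property-P}}-$(iv)$, the same identity $\langle E'(\phi^n),d_n\rangle=-\|d_n\|^2_{\mathcal{P}_{\phi^n}}$ coming from the $\mathcal{P}_{\phi^n}$-orthogonal projection, and the same absorption of the quadratic/cubic remainders via \textbf{Proposition~\ref{Property-of-E''}} and \textbf{(A6)}-$(ii)$. Part (ii) is where you genuinely diverge. The paper upgrades weak to strong $H^1$ convergence by a norm-convergence trick: from the identity $\lambda_{\phi^{n_j}}=-\langle\mathcal{P}_{\phi^{n_j}}\nabla^{\mathcal{R}}_{\mathcal{P}}E^{n_j},\phi^{n_j}\rangle+\langle\mathcal{H}_{\phi^{n_j}}\phi^{n_j},\phi^{n_j}\rangle$ it deduces $\|\phi^{n_j}\|_{\mathcal{H}_0}\to\|\phi_g\|_{\mathcal{H}_0}$, and weak convergence plus convergence of an equivalent Hilbert norm gives strong convergence. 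You instead read the stationarity defect as an elliptic equation for $-\tfrac12\Delta\phi^{n_j}$ and show the right-hand side converges strongly in $H^{-1}$, then invert the Dirichlet Laplacian. This is a valid alternative and arguably more transparent about where each structural assumption enters, at the price of having to treat the rotation term separately; the paper's route packages everything into one scalar identity. Two corrections to how you justify your route. First, the strong $H^{-1}$ convergence of $\Omega\mathcal{L}_z\phi^{n_j}$ does not follow from \textbf{(A5)} (which only concerns $\mathcal{L}_z\phi_g\in H^1_0(\mathcal{D})$ at the limit point); the correct reason is that $\mathcal{L}_z\phi^{n_j}$ is bounded in $L^2(\mathcal{D})$ and converges weakly there, and the embedding $L^2(\mathcal{D})\hookrightarrow H^{-1}(\mathcal{D})$ is compact (dual of Rellich--Kondrachov), so the convergence is strong in $H^{-1}$. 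Second, your justification of a single $\tau_{\max}$ depending only on $\phi^0$ is circular as written: the bound $E(\phi^n)\le E(\phi^0)$ presupposes the descent property at the earlier steps, which in turn needs the uniform constant. The paper closes this loop by induction on $n$, running the descent estimate and the a priori bound $\|\phi^n\|_{H^1}\le C\sqrt{E(\phi^n)}\le C\sqrt{E(\phi^0)}$ together with the monotone dependence of $C_{\phi^n}$ on $\|\phi^n\|_{H^1}$; with that induction made explicit your argument is complete.
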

The constant $\tau_{\max}$ is a global estimate, but as noted in {\bf Lemma \ref{C_t}}, larger steps maintaining sufficient descent are allowed around $\mathcal{S}$. 
In addition, if $E$ is a Morse-Bott functional on $\mathcal{S}$, we can weaken {\bf (A6)}-$(iii)$ to the standard Lipschitz continuity around $\phi_g$, i.e., for all $\phi, \psi \in\mathcal{B}_{\sigma}(\phi_g)$ and $u, v\in H_0^1(\mathcal{D})$,
\begin{align}\label{weaker-A6-iii}
	\left|\left\langle \big(\mathcal{P}_{\phi}-\mathcal{P}_{\psi}\big)u,v\right\rangle\right|\le C\|u\|_{H^1}\|v\|_{H^1}\|\phi-\psi\|_{H^1}.
\end{align}
This weaker condition still ensures the validity of {\bf Proposition \ref{Property-P}}, thereby guaranteeing the local convergence of the algorithm.

\begin{thm}\label{Local-Convergence}
	Let $E$ be a Morse-Bott functional on $\mathcal{S}$. Then, for every sufficiently small $\varepsilon>0$, there exist $\sigma>0$ and $\phi_g \in \mathcal{S}$ such that for all $\phi^0 \in \mathcal{B}_{\sigma}(\mathcal{S})$, the sequence $\{\phi^n\}_{n\in\mathbb{N}}$ generated by the P-RG has a locally linear convergence rate, i.e.,
	\begin{align*}
		\|\phi^n-\phi_g\|_{H^1}\le C_{\varepsilon}\|\phi^0-\phi_g\|_{H^1}\big(\sqrt{1-2C_{\tau}(\mu-\varepsilon)}\big)^n,\quad {\text{for all}}\;\ \tau\in(0,2/(L+\varepsilon)),
	\end{align*}
	where $C_{\varepsilon}$ is a constant depended on $\varepsilon$, $C_{\tau}=\tau-\frac{\tau^2}{2}(L+\varepsilon)$, $\mu$ and $L$ see \eqref{Mu-L}.
	Therefore, with the stepsize choice $\tau=1/(L+\varepsilon)$, this choice yields a precisely characterized convergence rate
	\begin{align}\label{Rate-1}
		\|\phi^n-\phi_g\|_{H^1}\le C_{\varepsilon}\|\phi^0-\phi_g\|_{H^1}\Bigg(\sqrt{1-\frac{\mu-\varepsilon}{L+\varepsilon}}\Bigg)^n.
	\end{align}
	
\end{thm}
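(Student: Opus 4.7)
The plan is to (1) establish a Polyak--Łojasiewicz (PL) inequality around the orbit $\mathcal{S}$ using the Morse--Bott assumption together with the spectral equivalence of Proposition~\ref{Property-P}$(i)$, (2) combine it with the sufficient descent of Theorem~\ref{Global-Convergence}$(i)$ to obtain geometric decay of the energy error $E(\phi^n)-E_{\mathcal{S}}$, and (3) upgrade orbital convergence to convergence toward a distinguished $\phi_g\in\mathcal{S}$ via a summable step-size Cauchy argument; the step-size optimization then produces the asserted optimal rate.

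I would first localize. For a prescribed $\varepsilon>0$, the local Lipschitz continuity from Proposition~\ref{Property-of-E''}$(iii)$ and {\bf (A6)}$(iii)$, combined with the uniform two-sided spectral bound $[\mu,L]$ on $N_{\phi}\mathcal{M}$ at $\phi\in\mathcal{S}$ from Proposition~\ref{Property-P}$(i)$, furnishes $\sigma>0$ such that for every $\phi\in\mathcal{B}_\sigma(\mathcal{S})$ with $H^1$-nearest point $\phi_*\in\mathcal{S}$, both bounds persist up to an $\varepsilon$ slack when operators are evaluated at $\phi$ rather than $\phi_*$. Decompose $\phi-\phi_*=v_T+v_N$ with $v_T\in T_{\phi_*}\mathcal{S}$ and $v_N\in N_{\phi_*}\mathcal{M}$: the orbit invariance together with the Morse--Bott identity $\ker(E''(\phi_*)-\lambda_g\mathcal{I})|_{T_{\phi_*}\mathcal{M}}=T_{\phi_*}\mathcal{S}$ reduces the Taylor expansion in Proposition~\ref{Property-of-E''}$(iv)$ to
\[
E(\phi)-E_{\mathcal{S}}=\tfrac12\bigl\langle(E''(\phi_*)-\lambda_g\mathcal{I})v_N,v_N\bigr\rangle+O(\|\phi-\phi_*\|_{H^1}^3).
\]
Writing $A:=E''(\phi_*)-\lambda_g\mathcal{I}$ and $P:=\mathcal{P}_{\phi_*}$, a parallel expansion of the Riemannian gradient (using Proposition~\ref{Property-P}$(iii)$) yields $\nabla_{\mathcal{P}}^{\mathcal{R}} E(\phi)=P^{-1}Av_N+o(\|v_N\|_{H^1})$, where the projection correction is absorbed into the remainder. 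Since $\mu P\le A\le L P$ on $N_{\phi_*}\mathcal{M}$, the operator inequality $\langle AP^{-1}Av_N,v_N\rangle\ge\mu\langle Av_N,v_N\rangle$ gives the sharp PL inequality
\[
\bigl\|\nabla_{\mathcal{P}}^{\mathcal{R}} E(\phi)\bigr\|_{\mathcal{P}_\phi}^2\ge 2(\mu-\varepsilon)\bigl(E(\phi)-E_{\mathcal{S}}\bigr),
\]
after absorbing the cubic remainder and the Lipschitz perturbation of $P$ into the $\varepsilon$ slack.

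For the rate, Proposition~\ref{Property-of-E''}$(iv)$ together with the upper bound $A\le (L+\varepsilon)P$ produces, on $\mathcal{B}_\sigma(\mathcal{S})$, the sharpened descent lemma
\[
E(\phi^{n+1})-E(\phi^n)\le -C_\tau\bigl\|d_n\bigr\|_{\mathcal{P}_{\phi^n}}^2,\qquad C_\tau=\tau-\tfrac{\tau^2}{2}(L+\varepsilon),\quad \tau\in\bigl(0,\tfrac{2}{L+\varepsilon}\bigr).
\]
Composing with the PL inequality gives $E(\phi^{n+1})-E_{\mathcal{S}}\le(1-2C_\tau(\mu-\varepsilon))(E(\phi^n)-E_{\mathcal{S}})$, and the quadratic growth bound $\inf_{\varphi\in\mathcal{S}}\|\phi-\varphi\|_{H^1}^2\le C(E(\phi)-E_{\mathcal{S}})$ that follows from Proposition~\ref{Prop2} converts this into $H^1$ orbital decay at rate $\rho:=\sqrt{1-2C_\tau(\mu-\varepsilon)}$. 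Maximizing $C_\tau$ in $\tau$ yields $\tau^\star=1/(L+\varepsilon)$, $C_{\tau^\star}=1/(2(L+\varepsilon))$, and hence the optimal rate $\sqrt{1-(\mu-\varepsilon)/(L+\varepsilon)}$ of \eqref{Rate-1}.

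The main obstacle is converting orbital convergence into convergence to a \emph{specific} $\phi_g\in\mathcal{S}$, because the PL framework intrinsically controls only the distance to the symmetry orbit and the nearest orbit point $\phi_*$ in principle changes with $n$. To close this gap I would chain the retraction estimate of Proposition~\ref{Property-P}$(iv)$, the $H^1$-$\mathcal{P}_\phi$ norm equivalence from {\bf (A6)}$(ii)$, and the geometric decay of $\|d_n\|_{\mathcal{P}_{\phi^n}}$ (itself geometric by PL and sufficient descent) to obtain
\[
\|\phi^{n+1}-\phi^n\|_{H^1}\le C\rho^{n}\sqrt{E(\phi^0)-E_{\mathcal{S}}},
\]
so that $\{\phi^n\}$ is Cauchy in $H^1$; its limit $\phi_g$ lies in $\mathcal{S}$ because the orbital distance vanishes along the subsequence, and the telescoping estimate $\|\phi^n-\phi_g\|_{H^1}\le\sum_{k\ge n}\|\phi^{k+1}-\phi^k\|_{H^1}$ delivers the stated inequality with $C_\varepsilon$ absorbing the geometric-series prefactor. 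The delicate point is that this Cauchy argument requires both the PL-driven geometric decay \emph{and} the retraction bound, and it is precisely this step that distinguishes the present Morse--Bott analysis from the standard isolated-minimizer case.
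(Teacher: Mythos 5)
Your overall architecture (a PL inequality near $\mathcal{S}$, the sharpened descent estimate with $C_\tau=\tau-\tfrac{\tau^2}{2}(L+\varepsilon)$, and a telescoping Cauchy argument to pass from orbital decay to a specific limit $\phi_g$) is exactly the paper's, and your steps (2)--(3) reproduce {\bf Lemma \ref{C_t}} and the paper's proof of the theorem almost verbatim. The genuine gap is in step (1), the PL inequality, which is precisely where the paper invests its technical work ({\bf Lemma \ref{Transfer}} and {\bf Lemma \ref{Polyak-Loj}}). Two concrete problems. First, you take $\phi_*$ to be the $H^1$-nearest point of $\mathcal{S}$; then $\phi-\phi_*$ is $H^1$-orthogonal to $T_{\phi_*}\mathcal{S}$, but its $L^2$-projection $v_T$ onto $T_{\phi_*}\mathcal{S}$ (the decomposition that matters, since $N_{\phi_*}\mathcal{M}$ and the bounds $\mu,L$ are defined via $L^2$-orthogonality) does not vanish, while every remainder you generate --- the Taylor remainder of $E'$, the Lipschitz perturbations of $\mathcal{P}_\phi$, $\lambda_\phi$ and of the projection --- is of size $o(\|\phi-\phi_*\|_{H^1})$, not $o(\|v_N\|_{H^1})$ as you assert. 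Unless you prove $\|v_T\|_{H^1}\lesssim\|v_N\|_{H^1}$ (this can in fact be extracted from the $H^1$ first-order optimality of $\phi_*$ together with the finite dimensionality of $T_{\phi_*}\mathcal{S}$, but you never argue it), the "absorb into the $\varepsilon$ slack" step is unjustified; this is exactly the obstruction the paper's {\bf Lemma \ref{Transfer}} is engineered to remove, by constructing $\phi_g^*\in\mathcal{S}$ with exact $L^2$-orthogonality to $i\phi_g^*,i\mathcal{L}_z\phi_g^*$ together with $\|\phi-\phi_g^*\|_{H^1}\le C_\phi\|\phi-\phi_g\|_{H^1}$. Second, the claim that "the projection correction is absorbed into the remainder" is false: by {\bf Lemma \ref{F-diff}} the leading term of $\nabla^{\mathcal{R}}_{\mathcal{P}}E(\phi)$ is $\text{Proj}^{\mathcal{P}_{\phi_*}}_{\phi_*}\mathcal{P}_{\phi_*}^{-1}Av_N$, and the subtracted component along $\mathcal{P}_{\phi_*}^{-1}\mathcal{I}\phi_*$ is generically first order in $\|v_N\|$, not higher order; your chain only lower-bounds $\|\mathcal{P}_{\phi_*}^{-1}Av_N\|_{\mathcal{P}_{\phi_*}}$, whereas a $\mathcal{P}$-orthogonal projection can only decrease this norm. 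The repair is short (pair the projected gradient with $v_N\in T_{\phi_*}\mathcal{M}$ in the $\mathcal{P}$-inner product and use Cauchy--Schwarz, or use the paper's device of bounding $(\nabla^{\mathcal{R}}_{\mathcal{P}}E(\phi),z)_{\mathcal{P}_\phi}-\tfrac{\mu-\varepsilon}{2}\|z\|^2_{\mathcal{P}_\phi}$ by its supremum over all $z$, which sidesteps any lower bound on a projected quantity), but as written the step fails. Relatedly, $\mu\mathcal{P}\le A\le L\mathcal{P}$ are quadratic-form bounds only on $N_{\phi_*}\mathcal{M}$, so "$\langle A\mathcal{P}^{-1}Av_N,v_N\rangle\ge\mu\langle Av_N,v_N\rangle$" must itself be derived via Cauchy--Schwarz rather than by composing operator inequalities.

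A further omission: your geometric recursion applies the PL inequality and the sharpened descent at every iterate, but both are only valid on a neighborhood of $\mathcal{S}$, so you must show the iterates never leave it; the paper breaks this circularity with an explicit induction, choosing $\sigma,\sigma_3$ so that $\sigma+\tfrac{C}{2(\mu-\varepsilon)}\sigma_3<\sigma_2$ and bounding $\|\phi^{k+1}-\widetilde{\phi}_g\|_{H^1}$ by the partial geometric sums. Finally, to obtain the stated prefactor you still need the elementary bound $\sqrt{E(\phi^0)-E_{\mathcal{S}}}\lesssim\|\phi^0-\phi_g\|_{H^1}$, which you do not mention. These last two points are routine; the two issues in the PL step, however, sit at the crux of the theorem and must be fixed before the proposal constitutes a proof.
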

Examining the local convergence rates, it becomes evident that the convergence rate improves as $\mu$ approaches $L$. Notably, a superlinear convergence rate (see \cite{2006Numerical}) is attainable when $\mu = L$. Furthermore, according to {\bf Remark~\ref{Proj-Sobo-Grad}}, this observation clarifies that the essence of acceleration in projected Sobolev gradient methods is fundamentally akin to preconditioning: both achieve faster convergence by improving the condition number of the problem. {We emphasize that the contraction factor of the form $\rho=\sqrt{1-(\mu-\varepsilon)/(L+\varepsilon)}$ is the standard rate associated with first-order methods under a Polyak-\L ojasiewicz-type descent
inequality, which we will use in the proof. While the Morse–Bott (second-order) structure implies the Polyak-\L ojasiewicz inequality and thus guarantees local linear convergence, the standard rate bound may be suboptimal. In particular, problems satisfying stronger second-order sufficient conditions may in principle admit faster (local) convergence rates, although projected Sobolev gradient methods (being first-order schemes) remain governed by the condition number through the ratio $\mu/L$ and therefore exhibit the Polyak-\L ojasiewicz-type rate given above.}

According to \eqref{Mu-L}, the operator 
\begin{align*}
	\mathcal{P}_{\phi} = E''(\phi) - \widetilde{\lambda}_{\phi}\mathcal{I},\quad\text{with}\quad \widetilde{\lambda}_{\phi}=\left\langle\mathcal{H}_{\phi}\phi,\phi\right\rangle,
\end{align*}
represents a theoretically optimal local preconditioner. However, it is not necessarily coercive even at $\phi_g$. Thus, a natural idea is to choose a {quasi}-optimal local preconditioner:
\begin{align}\label{Varep-P}
	\mathcal{P}_{\phi}=E''(\phi)-\big(\widetilde{\lambda}_{\phi}-\sigma_0\big)\mathcal{I}
\end{align}
around $\phi_g$, where {$\sigma_0>0$ is a small regularization parameter introduced solely to ensure the coercivity of $\mathcal{P}_{\phi}$. In theory, one chooses $\sigma_0>0$ as small as possible, yet large enough to ensure coercivity of the preconditioned operator on $H_0^1(\mathcal{D})$.} 
Since the preconditioner \eqref{Varep-P} does not satisfy {\bf (A6)}-$(iii)$, its global convergence cannot be guaranteed in general. {In fact, although $E''(\phi)$ itself satisfies {\bf (A6)}-$(iii)$, the term $\widetilde{\lambda}_{\phi} \mathcal{I}$ lacks the necessary regularity, as $\widetilde{\lambda}_{\phi}$ depends on $\phi$ only in a Lipschitz continuous manner; consequently, the full preconditioner $\mathcal{P}_{\phi}$ fails to satisfy this condition.} However, it can be shown that the preconditioner \eqref{Varep-P} is Lipschitz continuous with respect to $\phi$ based on the Lipschitz continuity of $E''(\phi)$ and $\widetilde{\lambda}_{\phi}$. Therefore, the {local} convergence of the P-RG can still be guaranteed for the preconditioner \eqref{Varep-P}. 

The following theorem demonstrates that the P-RG exhibit the best rate of local convergence when the preconditioner is chosen in the specified form.
\begin{thm}\label{Var-P-convergence}
	Let $E$ be a Morse-Bott functional on $\mathcal{S}$. Then, for every sufficiently small $\varepsilon > 0$, there exist $\sigma>0$ and $\phi_g \in \mathcal{S}$ such that for all $\phi^0 \in \mathcal{B}_{\sigma}(\mathcal{S})$, the sequence $\{\phi^n\}_{n\in\mathbb{N}}$ generated by the P-RG with the quasi-optimal local preconditioner \eqref{Varep-P} yields another locally linear convergence rate, i.e., $\text{for all}\ \tau\in(0,2/(L+\varepsilon))$
	\begin{align*}
		\|\phi^n-\phi_g\|_{H^1}\le C_{\varepsilon}\|\phi^0-\phi_g\|_{H^1}\left(\max\left\{|1-\tau\mu|,|1-\tau L |\right\}+\varepsilon\right)^n.
	\end{align*}
	Hence, when $\tau=2/(L+\mu)$, we have the well-known best local linear convergence rate for $\big\{\phi^n\big\}_{n\in\mathbb{N}}$
	{\rm\begin{align}
			\|\phi^n-\phi_g\|_{H^1}\le C_{\varepsilon}\|\phi^0-\phi_g\|_{H^1}\left(\frac{L-\mu}{L+\mu}+\varepsilon\right)^n.
	\end{align}}
\end{thm}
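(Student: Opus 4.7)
The plan is to mimic the strategy behind \textbf{Theorem \ref{Local-Convergence}}, but with the specific preconditioner \eqref{Varep-P}, whose main virtue is that $\mathcal{P}_{\phi_g}$ differs from $E''(\phi_g)-\lambda_g\mathcal{I}$ only by the small perturbation $\sigma_0\mathcal{I}$, so the combined operator $\mathcal{P}_{\phi_g}^{-1}(E''(\phi_g)-\lambda_g\mathcal{I})$ on $N_{\phi_g}\mathcal{M}$ has a very tight spectrum. First, I would verify that although \eqref{Varep-P} may fail the full assumption \textbf{(A6)}-$(iii)$, it still satisfies the weakened Lipschitz estimate \eqref{weaker-A6-iii}: Proposition \ref{Property-of-E''}-$(iii)$ gives Lipschitz continuity of $E''(\phi)$ on $H^1$-balls, and $\widetilde\lambda_\phi=\langle\mathcal{H}_\phi\phi,\phi\rangle$ is Lipschitz in $\phi$ in the $H^1$-topology by a direct computation. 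Picking $\sigma_0>0$ sufficiently small, Proposition \ref{Prop2} guarantees coercivity of $\mathcal{P}_{\phi_g}$ on $N_{\phi_g}\mathcal{M}$, and Proposition \ref{Property-P} remains applicable around $\phi_g$, which is all that is needed locally.

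Fix a reference minimizer $\phi_g\in\mathcal{S}$ (chosen at each step as the $L^2$-nearest point of $\mathcal{S}$ to $\phi^n$, so that $\phi^n-\phi_g$ lies, up to higher-order terms, in $N_{\phi_g}\mathcal{M}$). Writing the iteration as
\begin{align*}
\phi^{n+1}-\phi_g=\mathfrak{R}_{\phi^n}\big(-\tau\,\nabla_{\mathcal{P}}^{\mathcal{R}}E(\phi^n)\big)-\phi_g,
\end{align*}
and Taylor-expanding each factor, I would use Proposition \ref{Property-P}-$(iv)$ to control the retraction error and Proposition \ref{Property-P}-$(iii)$ to linearize $\nabla_{\mathcal{P}}^{\mathcal{R}}E(\cdot)-(\cdot)$ at $\phi_g$. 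Since $\nabla_{\mathcal{P}}^{\mathcal{R}}E(\phi_g)=0$ and the Fr\'echet derivative of $\nabla_{\mathcal{P}}^{\mathcal{R}}E$ at $\phi_g$ agrees, modulo a projection onto $T_{\phi_g}\mathcal{M}$, with $\mathcal{A}:=\mathcal{P}_{\phi_g}^{-1}(E''(\phi_g)-\lambda_g\mathcal{I})$, the leading-order iteration becomes
\begin{align*}
\phi^{n+1}-\phi_g=(I-\tau\mathcal{A})(\phi^n-\phi_g)+R_n,
\end{align*}
with $\|R_n\|_{H^1}=o(\|\phi^n-\phi_g\|_{H^1})$ uniformly for $\phi^n\in\mathcal{B}_\sigma(\mathcal{S})$.

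The key quantitative step is now a spectral estimate for $I-\tau\mathcal{A}$ on $N_{\phi_g}\mathcal{M}$. Because $\mathcal{A}$ is $\mathcal{P}_{\phi_g}$-self-adjoint and, by \eqref{Mu-L}, has spectrum in $[\mu,L]$, standard functional calculus yields
\begin{align*}
\|I-\tau\mathcal{A}\|_{\mathcal{L}(N_{\phi_g}\mathcal{M},\,\mathcal{P}_{\phi_g})}=\max\{|1-\tau\mu|,\,|1-\tau L|\}.
\end{align*}
Translating back to $\|\cdot\|_{H^1}$ via the norm equivalence from \textbf{(A6)}-$(ii)$, absorbing the orbit drift (from re-choosing $\phi_g^n$) and the residual $R_n$ as an $\varepsilon$-perturbation, one obtains the one-step contraction
\begin{align*}
\|\phi^{n+1}-\phi_g\|_{H^1}\le\big(\max\{|1-\tau\mu|,|1-\tau L|\}+\varepsilon\big)\|\phi^n-\phi_g\|_{H^1}.
\end{align*}
Iterating yields the first claim, and minimizing $\max\{|1-\tau\mu|,|1-\tau L|\}$ over $\tau$ gives the classical optimum $\tau=2/(L+\mu)$ with rate $(L-\mu)/(L+\mu)$, plus the unavoidable $\varepsilon$ term.

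The main obstacle will be the bookkeeping caused by the symmetry orbit. Since $\phi^n$ can drift along $\mathcal{S}$, the reference minimizer $\phi_g$ implicitly changes from step to step, and I must show that (i) replacing $\phi_g^n$ by $\phi_g^{n+1}$ contributes only an $O(\|\phi^n-\phi_g^n\|_{H^1}^2)$ error, absorbable into $\varepsilon$, and (ii) the spectral bounds $[\mu,L]$ from Proposition \ref{Property-P}-$(i)$ hold uniformly along $\mathcal{S}$ (which follows from the invariance properties \textbf{(A6)}-$(i)$ and Proposition \ref{Property-of-E''}-$(i)$ under $I_\alpha^\beta$). A secondary difficulty is controlling three small-but-coupled perturbations simultaneously—the cubic retraction remainder, the $\phi$-dependence of $\mathcal{P}_\phi$ (only $H^1$-Lipschitz under \eqref{weaker-A6-iii}), and the projection error from $T_{\phi^n}\mathcal{M}$ onto $N_{\phi_g^n}\mathcal{M}$—so that each is dominated by $\varepsilon\|\phi^n-\phi_g\|_{H^1}$; each one is a direct consequence of Proposition \ref{Property-P} once the neighborhood radius $\sigma$ is chosen sufficiently small in terms of $\varepsilon$, $\mu$ and $L$.
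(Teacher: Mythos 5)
Your outline has the right shape (linearize the iteration near the orbit, bound the linear part by $\max\{|1-\tau\mu|,|1-\tau L|\}$, treat everything else as an $\varepsilon$-perturbation), but it misses the one structural ingredient that makes the theorem true for the preconditioner \eqref{Varep-P} and not for a general one satisfying {\bf (A6)}: for $\mathcal{P}_{\phi_g}=E''(\phi_g)-(\lambda_g-\sigma_0)\mathcal{I}$ the symmetry directions are eigenfunctions of the preconditioner, i.e. $\mathcal{P}_{\phi_g}^{-1}\mathcal{I}\,i\phi_g=i\phi_g/\sigma_0$ and $\mathcal{P}_{\phi_g}^{-1}\mathcal{I}\,i\mathcal{L}_z\phi_g=i\mathcal{L}_z\phi_g/\sigma_0$. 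This is what the paper uses to show that $g'(\phi_g)$ maps $T_{\phi_g}\mathcal{M}$ into $N_{\phi_g}\mathcal{M}$, hence that the components of the increments $\phi^{n+1}-\phi^n$ along $i\phi_g$, $i\mathcal{L}_z\phi_g$ and $\phi_g$ are $o(\phi^n-\phi_g)$, so that the error decomposes as $\phi^n-\phi_g=\mathcal{J}_{\phi_g}(\phi^n-\phi_g)+o(\phi^n-\phi_g)$ and the contraction need only be proved for the $N_{\phi_g}\mathcal{M}$-component. In your scheme this issue reappears as follows: the linearized map is $\mathrm{Proj}^{\mathcal{P}_{\phi_g}}_{\phi_g}\mathcal{P}_{\phi_g}^{-1}(E''(\phi_g)-\lambda_g\mathcal{I})$ (Lemma \ref{F-diff}), not $\mathcal{A}$ itself, and it does not leave $N_{\phi_g}\mathcal{M}$ invariant for a generic $\mathcal{P}$; its output has a first-order component in the $\mathcal{P}_{\phi_g}$-orthogonal complement of $N_{\phi_g}\mathcal{M}$, which is spanned by $\mathcal{P}_{\phi_g}^{-1}\mathcal{I}i\phi_g$, $\mathcal{P}_{\phi_g}^{-1}\mathcal{I}i\mathcal{L}_z\phi_g$, $\mathcal{P}_{\phi_g}^{-1}\mathcal{I}\phi_g$. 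Re-centering at the $L^2$-nearest orbit point plus renormalization can only absorb (to second order) components along $i\phi_g$, $i\mathcal{L}_z\phi_g$, $\phi_g$; these two trios coincide, up to the harmless normal direction, precisely when the eigenfunction property above holds. Your stated rationale for \eqref{Varep-P} — that the spectrum of the combined operator is tight — concerns only the size of $[\mu,L]$ in \eqref{Mu-L} and does not close this gap, so the claimed one-step contraction with factor $\max\{|1-\tau\mu|,|1-\tau L|\}+\varepsilon$ is not justified as written.

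Two further points. First, the operator-norm identity you invoke "by standard functional calculus" needs the relevant composite operator to be self-adjoint in the inner product in which you measure the error; the paper instead works with $\mathcal{G}_{\tau}(\phi_g)=\mathcal{J}_{\phi_g}(I-\tau g'(\phi_g))|_{N_{\phi_g}\mathcal{M}}$, proves via compactness (Lemma \ref{G-spectrum}) that its spectral radius is at most $\max\{|1-\tau\mu|,|1-\tau L|\}$, and then must pass through the perturbed-iteration Lemma \ref{Nonlinear-Iteration} to convert a spectral-radius bound into a rate — a per-step norm contraction is exactly what is not available in general, and with your $L^2$-based re-centering the effective projection is the $L^2$ one, which is not $\mathcal{P}_{\phi_g}$-orthogonal unless the alignment property holds, so self-adjointness again requires the special form of \eqref{Varep-P}. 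Second, the paper's proof is anchored at the \emph{fixed} limit point $\phi_g$ furnished by Theorem \ref{Local-Convergence}, and uses the already-established linear convergence to sum the tails $\sum_{k\ge n}o(\phi^k-\phi_g)=o(\phi^n-\phi_g)$; if you insist on a moving reference you must additionally show that the accumulated orbit drift is summable and that the $L^2$-nearest point is $H^1$-comparable to the eventual limit (the paper's Lemma \ref{Transfer} constructs a modified functional precisely because $L^2$-minimality alone does not control $H^1$ quantities). These are repairable bookkeeping items, but the missing eigenfunction/kernel-alignment argument is the essential idea of the paper's proof and must be supplied.
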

It is observed that the rate of convergence described in the {\bf Theorem \ref{Var-P-convergence}} matches the optimal convergence rate achieved by the gradient descent method for solving unconstrained, strongly convex optimization problems \cite{2006Numerical}. This observation suggests that, when non-uniqueness stems exclusively from specific symmetries, the problem retains properties analogous to those of a strongly convex optimization problem. Indeed, this is subtly implied by the definition of the Morse-Bott property, and our theoretical findings rigorously substantiate this assertion. Furthermore, in this context, we have $\mu = (\lambda_3 - \lambda_g)\big/(\lambda_3 - \lambda_g + \sigma_0)$ and $L = 1$. See \ref{Appendix-Opt-Pre} for the computation of $\mu$ and $L$, and \eqref{lambda1} for the definition of $\lambda_3$. {Therefore, in the idealized continuous setting, where the preconditioned linear system is solved exactly in the infinite-dimensional Hilbert space, without any concern for computational cost or numerical stability, the asymptotic convergence rate 
	\[
	\rho = \frac{L-\mu}{L+\mu} +\varepsilon= \frac{\sigma_0}{2(\lambda_3 - \lambda_g)} + \varepsilon
	\] 
	shows that a smaller $\sigma_0 > 0$ yields faster local convergence, but only when the iterate is sufficiently close to the ground state $\phi_g$, a requirement that becomes stricter as $\sigma_0$ decreases, since coercivity of $E''(\phi) - (\widetilde{\lambda}_{\phi} - \sigma_0)\mathcal{I}$ is guaranteed only in a $\sigma_0$-dependent neighborhood of $\phi_g$. In practice, however, this theoretical advantage comes at a price: smaller $\sigma_0$ typically leads to a more ill-conditioned preconditioned system, thereby increasing the computational cost of solving it, as noted in the following \textbf{Remark~\ref{Re4.1}}.}

\begin{remark}\label{Re4.1}{
In our current framework, the P-RG algorithm solves, in the full space $H_0^1(\mathcal{D})$, linear systems of the form: for given $w \in H^{-1}(\mathcal{D})$, find $u \in H_0^1(\mathcal{D})$ such that
	\[
	\big\langle (E''(\phi) - (\widetilde{\lambda}_{\phi} - \sigma_0)\mathcal{I}) u, v \big\rangle = \langle w, v \rangle \quad \text{for all } v \in H_0^1(\mathcal{D}).
	\]
	Since the operator $E''(\phi_g) - \lambda_g \mathcal{I}$ lacks coercivity due to the $U(1) \times SO(2)$ symmetry (corresponding to phase and spatial rotations), the system becomes increasingly ill-conditioned as $\sigma_0 \to 0^+$, and singular when $\sigma_0 = 0$. Consequently, in practical numerical simulations, the choice of $\sigma_0$ entails a delicate trade-off between the theoretical convergence rate $\rho$ and the computational cost of solving the preconditioned system. This balance cannot be characterized by a universal rule, as it depends on a multitude of intertwined factors, such as physical parameters $\Omega$ and $\eta$, the spatial discretization scheme, and the linear solver employed.}
	{In addition,
		a natural question arises: can one avoid the ill-conditioning associated with $\sigma_0 \to 0^+$ (or $\sigma_0=0$) altogether, thereby allowing $\sigma_0 \to 0^+$ (or $\sigma_0=0$) to be used safely and yielding local superlinear or even quadratic convergence?
	In principle, the system can be rendered well-posed by restricting it to a suitable closed subspace. Specifically, define
	\[
	\mathcal{T}_\phi := \left\{ u \in H_0^1(\mathcal{D}) \,\middle|\, (u, i\phi)_{L^2} = 0,\; (u, i\mathcal{L}_z \phi)_{L^2} = 0 \right\}.
	\]
	Then, for any $w \in \mathcal{T}_\phi^*$, we solve: find $u \in \mathcal{T}_\phi$ such that
	\[
	\big\langle (E''(\phi) - (\widetilde{\lambda}_{\phi} - \sigma_0)\mathcal{I}) u, v \big\rangle = \langle w, v \rangle \quad \text{for all } v \in \mathcal{T}_\phi.
	\]
	When $\sigma_0 = 0$ and $\phi$ is sufficiently close to the ground state $\phi_g$, the operator is coercive on $\mathcal{T}_\phi$, ensuring well-posedness. This construction is well-motivated by the underlying geometric structure: the degeneracy stems from the invariance of the energy functional under the action of the symmetry group $ U(1) \times SO(2)$. This group induces an equivalence relation on $H_0^1(\mathcal{D}) \setminus \{0\}$ via
	\[
	u \sim v \quad \Longleftrightarrow \quad v(\bm{x}) = e^{i\alpha} u(A_\beta \bm{x}) \quad \text{for } \alpha,\,\beta \in [-\pi,\pi).
	\]
 	The associated quotient space is then
	\[
	[H_0^1(\mathcal{D})] := \big( H_0^1(\mathcal{D}) \setminus \{0\} \big) / (U(1) \times SO(2)),
	\]
	and its tangent space at the equivalence class $[\phi]$ is precisely the subspace $\mathcal{T}_\phi$ defined above.}
\end{remark}
	{We note that the discussion in Remark~\ref{Re4.1} regarding the quotient manifold provides a feasibility-oriented justification rather than a rigorous convergence analysis. Since this work focuses on local linear convergence within a first-order Riemannian optimization framework, such considerations lie beyond the scope of the present paper. Moreover, although methods formulated on the quotient manifold are theoretically sound, their practical implementation may incur additional computational overhead at each iteration. In summary, there may be no generally applicable rule for choosing $\sigma_0$ that guarantees consistently high computational efficiency in practice. Nevertheless, the theoretical results presented in this work, together with the quasi-optimal preconditioner, offer valuable insight into the origin of computational difficulties and thereby guide the algorithmic adjustments when challenges arise.}

Finally, we give the following corollary.
\begin{corollary}\label{E-eq-Phi}
	Let $E$ be a Morse-Bott functional on $\mathcal{S}$. For the sequence $\{\phi^n\}_{n\in\mathbb{N}}$ generated by the P-RG and its corresponding limit point $\phi_g$, if $\phi^0\in\mathcal{B}_{\sigma}(\mathcal{S})$, then the energy difference and the wave function difference are equivalent, i.e.,
	\begin{align*}
		\sqrt{E^{n}-E^{n+1}}\le \sqrt{E^{n}-E(\phi_g)}\lesssim\|\phi^{n}-\phi_g\|_{H^1}\lesssim \sqrt{E^{n}-E(\phi_g)}\lesssim \sqrt{E^{n}-E^{n+1}},
	\end{align*}
	where $E^n:=E(\phi^n)$.
\end{corollary}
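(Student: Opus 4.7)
The chain contains four inequalities, which I would establish in two groups. The leftmost one $\sqrt{E^n - E^{n+1}} \le \sqrt{E^n - E(\phi_g)}$ is immediate from the monotone decrease of $\{E^n\}$ (Theorem \ref{Global-Convergence}(i)) combined with $\lim_{n\to\infty} E^n = E(\phi_g)$, which forces $E^{n+1} \ge E(\phi_g)$. For the second inequality $\sqrt{E^n - E(\phi_g)} \lesssim \|\phi^n - \phi_g\|_{H^1}$, I would invoke the Lipschitz-type Taylor expansion of Proposition \ref{Property-of-E''}(iv) at $\phi = \phi_g$ with increment $v = \phi^n - \phi_g$, use the first-order condition $E'(\phi_g) = \lambda_g \mathcal{I}\phi_g$, and substitute the normalization identity $(\phi_g, \phi^n - \phi_g)_{L^2} = -\tfrac{1}{2}\|\phi^n - \phi_g\|^2_{L^2}$ (a consequence of $\|\phi^n\|_{L^2} = \|\phi_g\|_{L^2} = 1$) to collapse the linear contribution. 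The remaining quadratic term is then bounded by $C\|\phi^n - \phi_g\|^2_{H^1}$ via the continuity statement in Proposition \ref{Property-of-E''}(ii), and the cubic remainder is absorbed for $\phi^n$ sufficiently close to $\phi_g$.

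The rightmost inequality $\sqrt{E^n - E(\phi_g)} \lesssim \sqrt{E^n - E^{n+1}}$ I would obtain by combining the sufficient descent estimate $E^n - E^{n+1} \gtrsim \|d_n\|^2_{\mathcal{P}_{\phi^n}}$ of Theorem \ref{Global-Convergence}(i) with the Polyak-\L ojasiewicz inequality $\|d_n\|^2_{\mathcal{P}_{\phi^n}} \gtrsim E^n - E(\phi_g)$, which is the key estimate derived in the proof of Theorem \ref{Local-Convergence} from the Morse-Bott coercivity of Proposition \ref{Prop2}. Chaining the two estimates yields the inequality and, equivalently, the geometric decay $E^{n+1} - E(\phi_g) \le (1-c)(E^n - E(\phi_g))$ for some $c \in (0,1)$.

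The middle inequality $\|\phi^n - \phi_g\|_{H^1} \lesssim \sqrt{E^n - E(\phi_g)}$ is the most delicate and I would establish it by a telescoping argument. Using the retraction bound of Proposition \ref{Property-P}(iv) together with the coercivity of $\mathcal{P}_{\phi^n}$ from assumption (A6)(ii), I would first deduce
\[
\|\phi^{n+1} - \phi^n\|_{H^1} \lesssim \tau_n\|d_n\|_{H^1} + C\tau_n^2\|d_n\|^2_{L^2} \lesssim \|d_n\|_{\mathcal{P}_{\phi^n}} \lesssim \sqrt{E^n - E^{n+1}}.
\]
Summing from index $n$ to $\infty$ and invoking the geometric energy decay just established,
\[
\|\phi^n - \phi_g\|_{H^1} \le \sum_{k=n}^{\infty}\|\phi^{k+1} - \phi^k\|_{H^1} \lesssim \sum_{k=n}^{\infty}(1-c)^{(k-n)/2}\sqrt{E^n - E(\phi_g)} \lesssim \sqrt{E^n - E(\phi_g)}.
\]

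The principal difficulty is not any individual inequality, but securing the Polyak-\L ojasiewicz estimate at the \emph{specific} limit point $\phi_g$ rather than only with respect to the whole symmetry orbit $\mathcal{S}$: the Morse-Bott coercivity in Proposition \ref{Prop2} naturally controls only the $N_{\phi}\mathcal{M}$-component of the increment, while $\phi^n - \phi_g$ may have a non-trivial projection onto $T_{\phi_g}\mathcal{S}$. The telescoping device above is precisely what promotes a submanifold-distance estimate to a pointwise one: once geometric energy decay is in hand, the iterates automatically form a Cauchy sequence whose tail is comparable to $\sqrt{E^n - E(\phi_g)}$, irrespective of any residual drift along the symmetry directions.
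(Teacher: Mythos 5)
Your proposal is correct and follows essentially the same route as the paper: the sufficient-descent estimate of Lemma \ref{C_t}, the Polyak-\L ojasiewicz inequality, and the telescoping sum with geometric energy decay (the paper's \eqref{Phi-eq-Grad}--\eqref{Phi-eq-Energy}) are exactly the ingredients the paper chains together. The only deviation is minor: for $\sqrt{E^{n}-E(\phi_g)}\lesssim\|\phi^{n}-\phi_g\|_{H^1}$ you use a direct Taylor expansion at $\phi_g$ (collapsing the linear term via the normalization identity), whereas the paper routes this direction through $\left\|\nabla_{\mathcal{P}}^{\mathcal{R}}E^n\right\|$ by combining the Polyak-\L ojasiewicz inequality with the local Lipschitz continuity of the Riemannian gradient in Proposition \ref{Property-P}-$(iii)$; both arguments are valid.
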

This corollary shows that to terminate the iteration, the frequently used conditions via wave function error $|\phi^{n+1}-\phi^n|$ (see \cite{2003Computing}) and via energy error $|E^{n+1}-E^n|$ (see \cite{2017Efficient}) are equivalent. {The proof is provided later in Section~4.3.}

\subsection{Technical lemmas}
Before presenting the proof, we introduce several key lemmas that will be instrumental in establishing various aspects of our results. Specifically: {\bf Lemma \ref{Transfer}-\ref{Nonlinear-Iteration}} will be employed to demonstrate the local convergence rates, i.e., {\bf Theorem \ref{Local-Convergence}} and {\bf Theorem \ref{Var-P-convergence}}.

In order to obtain accurate local convergence rates, we establish some local estimates. Firstly, we introduce the following lemma.
\begin{lemma}\label{Transfer}
	Let $E$ be a
	Morse-Bott functional on $\mathcal{S}$. For any $\phi \in \mathcal{M}$ and $\phi_g\in\mathcal{S}$, there exists $\phi^*_g \in \mathcal{S}$ such that the following orthogonality conditions hold:
	{\rm
		\begin{align*}
			(\phi-\phi_g^*,i\phi_g^*)_{L^2}=0\quad and\quad(\phi-\phi_g^*,i\mathcal{L}_z\phi_g^*)_{L^2}=0.
		\end{align*}
	}
	Furthermore, $\|\phi-\phi_g^*\|_{H^1}\le C_{\phi} \|\phi-\phi_g\|_{H^1}.$
\end{lemma}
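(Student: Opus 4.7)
The plan is to construct $\phi_g^*$ as an element of $\mathcal{S}$ minimizing the $L^2$-distance from $\phi$, and then derive the two orthogonality conditions as first-order necessary conditions for this minimization. The $H^1$-norm estimate will then come from the fact that on the finite-dimensional compact submanifold $\mathcal{S}$, the induced $L^2$- and $H^1$-topologies are equivalent, with an equivalence constant depending only on $\phi_g$.

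\textbf{Construction of $\phi_g^*$.} Parametrize $\mathcal{S}$ by the continuous map $\iota:\mathbb{T}^2\to\mathcal{M}$, $\iota(\alpha,\beta):=I_{\alpha}^{\beta}\phi_g$, where $\mathbb{T}^2=[-\pi,\pi)^2$. By assumption \textbf{(A5)}, $\mathcal{L}_z\phi_g\in H_0^1(\mathcal{D})$, so $\iota$ is $C^1$ into $H_0^1(\mathcal{D})$ with
\begin{align*}
\partial_{\alpha}\iota(\alpha,\beta)=i\,I_{\alpha}^{\beta}\phi_g,\qquad \partial_{\beta}\iota(\alpha,\beta)=i\mathcal{L}_z I_{\alpha}^{\beta}\phi_g,
\end{align*}
where the second identity uses the rotational invariance of $\mathcal{L}_z$. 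Define $g:\mathbb{T}^2\to\mathbb{R}$ by $g(\alpha,\beta):=\|\phi-\iota(\alpha,\beta)\|_{L^2}^2$. Since $g$ is continuous on the compact torus $\mathbb{T}^2$, it attains a minimum at some $(\alpha^*,\beta^*)$. Set $\phi_g^*:=\iota(\alpha^*,\beta^*)\in\mathcal{S}$.

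\textbf{Orthogonality.} Using the chain rule with the derivatives above,
\begin{align*}
\partial_{\alpha}g(\alpha,\beta)=-2(\phi-\iota(\alpha,\beta),\,i\,\iota(\alpha,\beta))_{L^2},\qquad \partial_{\beta}g(\alpha,\beta)=-2(\phi-\iota(\alpha,\beta),\,i\mathcal{L}_z\iota(\alpha,\beta))_{L^2}.
\end{align*}
Setting these equal to zero at $(\alpha^*,\beta^*)$ yields exactly the two required orthogonality identities.

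\textbf{Norm estimate.} Since $\phi_g\in\mathcal{S}$, the minimization property gives $\|\phi-\phi_g^*\|_{L^2}\le\|\phi-\phi_g\|_{L^2}$, and the triangle inequality yields $\|\phi_g^*-\phi_g\|_{L^2}\le 2\|\phi-\phi_g\|_{L^2}\le 2\|\phi-\phi_g\|_{H^1}$. It therefore suffices to establish the claim
\begin{align*}
\|\psi-\phi_g\|_{H^1}\le C_{\phi_g}\|\psi-\phi_g\|_{L^2}\qquad\text{for all }\psi\in\mathcal{S},
\end{align*}
after which the triangle inequality finishes the proof with constant $1+2C_{\phi_g}$. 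To prove the claim, note that under the Morse-Bott assumption $i\phi_g$ and $i\mathcal{L}_z\phi_g$ are linearly independent in $L^2$, so by the phase-rotation equivariance of $\iota$ its differential has rank two everywhere, i.e., $\iota$ is an immersion. The isotropy set $\iota^{-1}(\phi_g)\subset\mathbb{T}^2$ is therefore discrete, hence finite by compactness. Near each such point a Taylor expansion combined with the positive definiteness of the Gram matrix of $\{i\phi_g,i\mathcal{L}_z\phi_g\}$ yields a local quadratic lower bound on $\|\iota(\alpha,\beta)-\phi_g\|_{L^2}$, while away from this finite set the $L^2$-distance has a uniform positive lower bound by continuity and compactness. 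Combining with the global $H^1$-Lipschitz bound $\|\iota(\alpha_1,\beta_1)-\iota(\alpha_2,\beta_2)\|_{H^1}\le C\|(\alpha_1-\alpha_2,\beta_1-\beta_2)\|$ (with $C$ depending on $\|\phi_g\|_{H^1}$ and $\|\mathcal{L}_z\phi_g\|_{H^1}$) gives the desired equivalence.

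\textbf{Main obstacle.} The delicate step is establishing the equivalence of $L^2$- and $H^1$-metrics along $\mathcal{S}$. The minimization is intrinsically $L^2$, so converting the resulting $L^2$-bound into an $H^1$-bound requires both the Morse-Bott-induced local injectivity of $\iota$ and a global compactness argument to rule out any ratio blow-up away from $\phi_g$. Once this equivalence is in hand, the rest is routine.
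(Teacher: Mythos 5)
Your proof is correct, but it takes a genuinely different route from the paper. The paper deliberately avoids minimizing the plain $L^2$-distance: it builds the auxiliary functional $\mathcal{F}_{\phi}(u)$ in \eqref{f-phi-u}, shows it is two-sidedly equivalent to $\|\phi-u\|_{H^1}^2$ on $\mathcal{S}$, minimizes it over $\mathcal{S}$, and obtains the $L^2$-orthogonality because the first-order conditions $\left\langle\big(E''(\phi_g^*)+C_{\phi}\big)(\phi-\phi_g^*),\gamma_i'(0)\right\rangle=0$ collapse, via {\bf Proposition~\ref{Prop1}}, to $(\lambda_g+C_{\phi})(\phi-\phi_g^*,i\phi_g^*)_{L^2}=0$ (and likewise for $i\mathcal{L}_z\phi_g^*$); the $H^1$ bound then falls out of \eqref{n-e} and $\mathcal{F}_{\phi}(\phi_g^*)\le\mathcal{F}_{\phi}(\phi_g)$. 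You instead take the route the authors explicitly flag in their remark after {\bf Lemma~\ref{Nonlinear-Iteration}} as obstructed (``the $L^2$ norm does not control the $H^1$ norm''): you minimize $\|\phi-u\|_{L^2}^2$ over $\mathcal{S}$, get the orthogonality immediately from the Euler--Lagrange equations on the torus, and then repair the norm issue by proving that the $L^2$- and $H^1$-distances to $\phi_g$ are equivalent \emph{along} the compact two-dimensional orbit $\mathcal{S}$ — which is legitimate, since by the triangle inequality the only $H^1$ quantity you must control is $\|\phi_g^*-\phi_g\|_{H^1}$, and this difference lives on $\mathcal{S}$. Your equivalence argument is sound: {\bf (A5)} gives the $C^1$/Lipschitz parametrization of $\mathcal{S}$ in $H^1$, the linear independence of $i\phi_g$ and $i\mathcal{L}_z\phi_g$ (which, strictly, comes from the paper's standing convention $\dim\mathcal{S}=2$ rather than the Morse--Bott property itself) makes $\iota$ an immersion and yields the local linear lower bound on the $L^2$-distance near the finitely many isotropy parameters, and compactness handles the rest. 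What each approach buys: yours is more elementary, avoids constructing and estimating $\mathcal{F}_{\phi}$ (in particular the nonlinear term $I$ and the choice of $U$), and even delivers a constant depending only on $\phi_g$ rather than on $\|\phi\|_{H^1}$; the paper's approach never needs the orbitwise $L^2$--$H^1$ equivalence and its compactness/immersion bookkeeping, and transfers verbatim to situations where one would rather not analyze the isotropy structure of the group action. Only cosmetic quibbles: say ``linear lower bound on the $L^2$-distance'' (quadratic for its square), and attribute the independence of the two generators to the $\dim\mathcal{S}=2$ setting stated in Section~\ref{Sec2}.
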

\begin{proof}
	We construct a functional as follows
	\begin{align}\label{f-phi-u}
		\mathcal{F}_{\phi}(u)&:=\frac{1}{2}\|\phi-u\|^2_{\mathcal{H}_0}+\frac{U}{2}\|\phi-u\|^2_{L^2}\\
		\nonumber&\qquad\qquad+\underbrace{\frac{1}{2}\left\langle f(\rho_{u})(\phi-u),\phi-u\right\rangle+\frac{1}{2}\int_{\mathcal{D}}\int_{|u|^2}^{|\phi|^2}f'(s)\big(|\phi|^2-s\big)\;\text{d}s\text{d}\bm{x}}_{=:I},
	\end{align}
	where $U$ is an undetermined constant. According to {\bf (A3)}, we have
	\begin{align*}
		|I|&\le C\left\langle \left(1+|u|^{1+\theta}\right)(\phi-u),\phi-u\right\rangle+C\int_{\mathcal{D}}\int_{|u|^2}^{|\phi|^2}s^{(\theta-1)/2}\big(|\phi|^2-|u|^2\big)\;\text{d}s\text{d}\bm{x}\\
		&\le C\left\langle \left(1+|u|^{1+\theta}\right)(\phi-u),\phi-u\right\rangle+C\left\langle \left(|\phi|+|u|\right)^{1+\theta}(\phi-u),\phi-u\right\rangle\\
		&\le C\left\langle \left(1+\left(|\phi|+|u|\right)^{1+\theta}\right)(\phi-u),\phi-u\right\rangle.
	\end{align*}
	Similar to \eqref{Positive1}, we further obtain
	\begin{align*}
		|I|&\le C\|\phi-u\|^2_{L^2}+C\left(\|\phi\|^{1+\theta}_{L^6}+\|u\|^{1+\theta}_{L^6}\right)\|\phi-u\|^2_{L^{p}}\\
		&\le C_{\phi,u}\left(\varepsilon^{-\frac{(1-2/p)d}{2-(1-2/p)d}}\|\phi-u\|^{2}_{L^{2}}+\varepsilon\|\phi-u\|^{2}_{H^1}\right),
	\end{align*}
	where $p=12/(5-\theta)\in[\frac{12}{5},6)$.
	Let $u\in\mathcal{S}$, combined with the coerciveness and continuity of $\mathcal{H}_0$, we can choose a sufficiently small constans $\varepsilon$ and a sufficiently large constant $U=C_{\phi}\neq-\lambda_{g}$ positively correlated with $\|\phi\|_{H^1}$ such that
	\begin{align}\label{n-e}
		C\|\phi-u\|^2_{H^1}\le \mathcal{F}_{\phi}(u)\le C_{\phi}\|\phi-u\|^2_{H^1}.
	\end{align}
	Now we consider the global optimization of $\mathcal{F}_{\phi}(u)$ on the manifold $\mathcal{S}$:
	\begin{align*}
		\phi_g^*:=\argmin\limits_{u\in\mathcal{S}}\mathcal{F}_{\phi}(u).
	\end{align*}
	Noting that $\mathcal{S}$ is a finite dimensional $C^1$ submanifold and $\mathcal{F}_{\phi}$ is a {continuously} differentiable function with respect to $u$, then the solution $\phi_g^{*}$ to the above optimization problem exists and it satisfies the first order necessary condition, i.e., let $\gamma_1(t)=e^{it}\phi_g^{*},\ \gamma_2(t)=\phi_g^{*}(A_{t}\bm{x})$, for $i=1$ or $2$, 
	\begin{align*}
		\frac{\text{d}\mathcal{F}_{\phi}(\gamma_i(t))}{\text{d}t}\Bigg|_{t=0}=0.
	\end{align*}
	Calculating directly yields the following result
	\begin{align*}
		&\frac{\text{d}\mathcal{F}_{\phi}(\gamma_i(t))}{\text{d}t}\Bigg|_{t=0}=-\left\langle\big(\mathcal{H}_{\phi_g^{*}}+C_{\phi}\big)(\phi-\phi_g^{*}),\gamma^{\prime}_i(0)\right\rangle+\left\langle f'(\rho_{\phi_g^*})|\phi-\phi_g^{*}|^2\phi_g^{*},\gamma^{\prime}_i(0)\right\rangle\\
		&\qquad\qquad\qquad\qquad\qquad\qquad\qquad\qquad\qquad\qquad\quad-\left\langle f'(\rho_{\phi_g^*})\big(|\phi|^2-|\phi_g^{*}|^2\big)\phi_g^{*},\gamma^{\prime}_i(0)\right\rangle\\
		&=-\left\langle\big(\mathcal{H}_{\phi_g^{*}}+C_{\phi}\big)(\phi-\phi_g^{*}),\gamma^{\prime}_i(0)\right\rangle+\left\langle f'(\rho_{\phi_g^*})(2|\phi_g^{*}|^2-\phi\overline{\phi_g^{*}}-\phi_g^{*}\overline{\phi})\phi_g^{*},\gamma^{\prime}_i(0)\right\rangle\\
		&=-\left\langle\big(\mathcal{H}_{\phi_g^{*}}+C_{\phi}\big)(\phi-\phi_g^{*}),\gamma^{\prime}_i(0)\right\rangle-\left\langle f'(\rho_{\phi_g^*})\big(|\phi_g^{*}|^2+(\phi_g^{*})^2\overline{\cdot}\big)(\phi-\phi_g^{*}),\gamma^{\prime}_i(0)\right\rangle\\
		&=-\left\langle \big(E''(\phi_g^{*})+C_{\phi}\big)(\phi-\phi_g^{*}),\gamma^{\prime}_i(0)\right\rangle.
	\end{align*}
	Thus, we derive
	\begin{align*}
		\left\langle \big(E''(\phi_g^{*})+C_{\phi}\big)(\phi-\phi_g^*),i\phi_g^*\right\rangle&=\left(\lambda_g+C_{\phi}\right)( \phi-\phi_g^*,i\phi_g^*)_{L^2}=0,\\
		\left\langle \big(E''(\phi_g^{*})+C_{\phi}\big)(\phi-\phi_g^*),i\mathcal{L}_z\phi_g^*\right\rangle&=\left(\lambda_g+C_{\phi}\right)( \phi-\phi_g^*,i\mathcal{L}_z\phi_g^*)_{L^2}=0.
	\end{align*}
	In addition, since \(\phi_g^*\) corresponds to the global minimum of \(\mathcal{F}_{\phi}\) and according to \eqref{n-e}, we have
	\begin{align*}
		C\big\|\phi-\phi_g^*\big\|^2_{H^1}\le \mathcal{F}_{\phi}(\phi_g^*)\le \mathcal{F}_{\phi}(\phi_g)\le C_{\phi}\|\phi-\phi_g\|^2_{H^1}.
	\end{align*}
	This completes the proof.
\end{proof}

This lemma shows that $E$ satisfies the Polyak-\L ojasiewicz inequality around $\phi_g$.
\begin{lemma}\label{Polyak-Loj}
	Let $E$ be a Morse-Bott functional on $\mathcal{S}$. For any $\phi_g\in\mathcal{S}$, and for every sufficiently small $\varepsilon>0$, there exists $\sigma>0$ such that for any $\phi \in \mathcal{B}_{\sigma}(\phi_g)$, the following Polyak-\L ojasiewicz inequality holds:
	\[
	E(\phi) - E(\phi_g)\leq \frac{1}{2(\mu - \varepsilon)} \left\| \nabla^{\mathcal{R}}_{\mathcal{P}} E(\phi)\right\|^2_{\mathcal{P}_{\phi}}.
	\]
\end{lemma}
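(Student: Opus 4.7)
The strategy is to Taylor-expand the energy around a carefully chosen representative $\phi_g^*\in\mathcal{S}$ of the orbit through $\phi_g$, then match the resulting quadratic form with the preconditioned gradient via the Morse-Bott spectral gap from Proposition~\ref{Property-P}(i).

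For $\phi\in\mathcal{B}_\sigma(\phi_g)$, Lemma~\ref{Transfer} produces $\phi_g^*\in\mathcal{S}$ (so $E(\phi_g^*)=E(\phi_g)$) with $\tilde v:=\phi-\phi_g^*$ satisfying $(\tilde v, i\phi_g^*)_{L^2}=(\tilde v, i\mathcal{L}_z\phi_g^*)_{L^2}=0$ and $\|\tilde v\|_{H^1}\le C_\phi\|\phi-\phi_g\|_{H^1}$. Because $\phi$ and $\phi_g^*$ both lie on the unit sphere, $(\phi_g^*,\tilde v)_{L^2}=-\tfrac12\|\tilde v\|_{L^2}^2$, so I write
\[
\tilde v \;=\; v_\perp \;-\; \tfrac12\|\tilde v\|_{L^2}^2\,\phi_g^*, \qquad (v_\perp,\phi_g^*)_{L^2}=0.
\]
Since $\phi_g^*\perp_{L^2}\{i\phi_g^*, i\mathcal{L}_z\phi_g^*\}$, this forces $v_\perp\in N_{\phi_g^*}\mathcal{M}$, and Sobolev embedding gives $\|v_\perp\|_{H^1}\sim\|\tilde v\|_{H^1}$. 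Taylor-expanding via Proposition~\ref{Property-of-E''}(iv) and using $\langle E'(\phi_g^*),\tilde v\rangle = \lambda_g(\phi_g^*,\tilde v)_{L^2}=-\tfrac{\lambda_g}{2}\|\tilde v\|^2_{L^2}$ to recombine the linear and $\lambda_g$-diagonal terms, then substituting the splitting and bounding the mixed contributions through the identity $\langle(E''(\phi_g^*)-\lambda_g\mathcal{I})\phi_g^*,\cdot\rangle = 2(f'(\rho_{\phi_g^*})|\phi_g^*|^2\phi_g^*,\cdot)_{L^2}$, one obtains
\[
E(\phi)-E(\phi_g) \;\le\; \tfrac12\big\langle\big(E''(\phi_g^*)-\lambda_g\mathcal{I}\big)v_\perp,v_\perp\big\rangle \;+\; C_{\phi_g^*,\tilde v}\|\tilde v\|_{H^1}^3.
\]

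To link this quadratic form with the preconditioned gradient, start from $\langle\mathcal{P}_\phi\nabla_{\mathcal{P}}^{\mathcal{R}}E(\phi),v_\perp\rangle = \langle E'(\phi)-\lambda_\phi\mathcal{I}\phi, v_\perp\rangle$. Taylor-expand $E'$ at $\phi_g^*$ via Proposition~\ref{Property-of-E''}(iii), use the construction identity $(\phi,v_\perp)_{L^2}=\|v_\perp\|_{L^2}^2$, and bound $|\lambda_\phi-\lambda_g|\le C_{\phi_g^*}\|\tilde v\|_{L^p}$ from Proposition~\ref{Property-P}(iii); all resulting remainders are $o(\|v_\perp\|^2_{H^1})$ as $\sigma\to 0$, so
\[
\langle\mathcal{P}_\phi\nabla_{\mathcal{P}}^{\mathcal{R}}E(\phi),v_\perp\rangle \;\ge\; (1-\varepsilon_1)\,\big\langle\big(E''(\phi_g^*)-\lambda_g\mathcal{I}\big)v_\perp,v_\perp\big\rangle.
\]
Applying Cauchy-Schwarz in $(\cdot,\cdot)_{\mathcal{P}_\phi}$ on the left, together with the continuity of $\mathcal{P}_\phi$ from \textbf{(A6)}(iii) and the lower spectral bound $\mu\|v_\perp\|_{\mathcal{P}_{\phi_g^*}}^2\le\langle(E''(\phi_g^*)-\lambda_g\mathcal{I})v_\perp,v_\perp\rangle$, produces $\|v_\perp\|_{\mathcal{P}_\phi}\le\sqrt{(1+\varepsilon_2)/\mu}\,\sqrt{\langle(E''(\phi_g^*)-\lambda_g\mathcal{I})v_\perp,v_\perp\rangle}$, whence
\[
\big\langle\big(E''(\phi_g^*)-\lambda_g\mathcal{I}\big)v_\perp,v_\perp\big\rangle \;\le\; \tfrac{1}{\mu-\varepsilon}\,\|\nabla_{\mathcal{P}}^{\mathcal{R}}E(\phi)\|_{\mathcal{P}_\phi}^2.
\]
The same chain run backwards gives $\|\nabla_{\mathcal{P}}^{\mathcal{R}}E(\phi)\|^2_{\mathcal{P}_\phi}\gtrsim\|\tilde v\|_{H^1}^2$, which licenses absorbing the cubic remainder from the energy estimate into a further perturbation of $\varepsilon$, and the PL inequality follows.

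The main obstacle is the gradient-form comparison: the test direction $v_\perp$ is orthogonal to the symmetry directions at $\phi_g^*$ but not at $\phi$, and the Lagrange multiplier $\lambda_\phi$ drifts linearly in $\tilde v$, so both the multiplier shift and the loss of tangency must be shown to be strictly subordinate to $\langle(E''(\phi_g^*)-\lambda_g\mathcal{I})v_\perp,v_\perp\rangle$---itself of size $\Theta(\|v_\perp\|_{H^1}^2)$ by Morse-Bott coercivity---before Cauchy-Schwarz can deliver the sharp constant $\mu-\varepsilon$.
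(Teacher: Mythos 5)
Your proof is correct, but it assembles the same ingredients along a genuinely different route than the paper. The paper Taylor-expands $E(\phi_g^*)$ around $\phi$ (not around $\phi_g^*$), so the first-order term appears directly as $\big(\nabla^{\mathcal{R}}_{\mathcal{P}}E(\phi),\phi-\phi_g^*\big)_{\mathcal{P}_{\phi}}$ together with $-\tfrac12\langle(E''(\phi)-\lambda_\phi\mathcal{I})(\phi-\phi_g^*),\phi-\phi_g^*\rangle$; after transporting the direction into $N_{\phi_g^*}\mathcal{M}$ by the same sphere-geometry splitting you use and invoking the spectral bound $\mu$, it finishes in one stroke with the completing-the-square estimate $\big(\nabla^{\mathcal{R}}_{\mathcal{P}}E(\phi),w\big)_{\mathcal{P}_{\phi}}-\tfrac{\mu-\varepsilon}{2}\|w\|^2_{\mathcal{P}_{\phi}}\le\sup_{v}\big((\nabla^{\mathcal{R}}_{\mathcal{P}}E(\phi),v)_{\mathcal{P}_{\phi}}-\tfrac{\mu-\varepsilon}{2}\|v\|^2_{\mathcal{P}_{\phi}}\big)=\tfrac{1}{2(\mu-\varepsilon)}\|\nabla^{\mathcal{R}}_{\mathcal{P}}E(\phi)\|^2_{\mathcal{P}_{\phi}}$, absorbing all remainders into the quadratic term, so it never needs a lower bound on the gradient. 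You instead expand at the critical point $\phi_g^*$, kill the linear term by criticality, and then prove a separate near-identity $\langle\mathcal{P}_{\phi}\nabla^{\mathcal{R}}_{\mathcal{P}}E(\phi),v_\perp\rangle=(1+o(1))\,\langle(E''(\phi_g^*)-\lambda_g\mathcal{I})v_\perp,v_\perp\rangle$, from which Cauchy--Schwarz plus the spectral gap and \textbf{(A6)}-$(iii)$ give the bound of the quadratic form by $\tfrac{1}{\mu-\varepsilon}\|\nabla^{\mathcal{R}}_{\mathcal{P}}E(\phi)\|^2_{\mathcal{P}_{\phi}}$, and you additionally need $\|\nabla^{\mathcal{R}}_{\mathcal{P}}E(\phi)\|_{\mathcal{P}_{\phi}}\gtrsim\|\phi-\phi_g^*\|_{H^1}$ (via Morse--Bott coercivity) to absorb the cubic remainder of the energy expansion. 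Both routes rest on Lemma~\ref{Transfer}, the splitting $\phi-\phi_g^*=v_\perp-\tfrac12\|\phi-\phi_g^*\|^2_{L^2}\phi_g^*$, and the spectral equivalence on $N_{\phi_g^*}\mathcal{M}$; yours must chase two extra $\varepsilon$'s through the Cauchy--Schwarz step and verify that the multiplier drift and the loss of tangency at $\phi$ are subordinate to the coercive quadratic form (which you do correctly), but it yields as a by-product the two-sided comparison $\|\nabla^{\mathcal{R}}_{\mathcal{P}}E(\phi)\|\sim\|\phi-\phi_g^*\|_{H^1}$, essentially the error-bound content the paper only extracts later (cf.\ Corollary~\ref{E-eq-Phi}), whereas the paper's supremum trick is shorter and delivers the sharp constant without any gradient lower bound.
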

\begin{proof}
	According to $E(\phi_g^*)=E(\phi_g)$ and Taylor's formula at $\phi$, we have
	\begin{align}
		\nonumber E(\phi)-&E(\phi_g)=E(\phi)-E(\phi_g^*)\\
		\nonumber=&\left\langle E'(\phi),\phi-\phi_g^*\right\rangle-\frac{1}{2}\left\langle E''(\phi)(\phi-\phi_g^*),\phi-\phi_g^*\right\rangle+o\left(\|\phi-\phi_g^*\|^2_{H^1}\right)\\
		=&\left( \nabla^{\mathcal{R}}_{\mathcal{P}} E(\phi),\phi-\phi_g^*\right)_{\mathcal{P}_{\phi}}\hspace{-0.32cm}-\frac{1}{2}\left\langle \big(E''(\phi)-\lambda_{\phi}\mathcal{I}\big)(\phi-\phi_g^*),\phi-\phi_g^*\right\rangle+o\left(\|\phi-\phi_g^*\|^2_{H^1}\right)\label{P-L0}.
	\end{align}
	Note that
	\begin{align}
		\nonumber\phi-\phi_g^*&=-\phi_g^*+(\phi_g^*,\phi)_{L^2}\phi+(\phi-\phi_g^*,\phi)_{L^2}\phi\\
		\nonumber&=\phi-\phi_g^*+(\phi_g^*-\phi,\phi)_{L^2}\phi+\frac{1}{2}\left(\|\phi\|_{L^2}^2-\|\phi_g^*\|_{L^2}^2+\|\phi-\phi_g^*\|_{L^2}\right)\phi\\
		&=\text{Proj}^{L^2}_{\phi}(\phi-\phi_g^*)+\frac{1}{2}\|\phi-\phi_g^*\|^2_{L^2}\phi,\label{P-L1-1}\\
		\nonumber\phi-\phi_g^*&=\phi-(\phi,\phi_g^*)_{L^2}\phi_g^*-(\phi_g^*-\phi,\phi_g^*)_{L^2}\phi_g^*\\
		&=\text{Proj}^{L^2}_{\phi_g^*}(\phi-\phi_g^*)-\frac{1}{2}\|\phi-\phi_g^*\|^2_{L^2}\phi_g^*,\label{P-L1}\\
		\Longrightarrow\;&\text{Proj}^{L^2}_{\phi}(\phi-\phi_g^*)=\text{Proj}^{L^2}_{\phi_g^*}(\phi-\phi_g^*)-\frac{1}{2}\|\phi-\phi_g^*\|^2_{L^2}\phi-\frac{1}{2}\|\phi-\phi_g^*\|^2_{L^2}\phi_g^*,\label{P-L1-2}
	\end{align}
	where $\text{Proj}^{L^2}_{\phi_g^*}(\phi-\phi_g^*)\in N_{\phi_g^*}\mathcal{M}$.
	Substituting \eqref{P-L1-1} into \eqref{P-L0}, and using {\bf Proposition \ref{Property-of-E''}}-$(ii)$ and {\bf Proposition \ref{Property-P}}-$(iii)$, we derive
	\begin{align*}
		E(\phi)-E(\phi_g)&=\left( \nabla^{\mathcal{R}}_{\mathcal{P}} E(\phi),\text{Proj}^{L^2}_{\phi}(\phi-\phi_g^*)\right)_{\mathcal{P}_{\phi}}\\
		\nonumber-&\frac{1}{2}\left\langle \big(E''(\phi)-\lambda_{\phi}\mathcal{I}\big)\text{Proj}^{L^2}_{\phi}(\phi-\phi_g^*),\text{Proj}^{L^2}_{\phi}(\phi-\phi_g^*)\right\rangle+o\left(\|\phi-\phi_g^*\|^2_{H^1}\right).
	\end{align*}
	Plugging \eqref{P-L1-2} into the above identity, we get
	
	\begin{align}\label{P-L2}
		\nonumber E(\phi)-E(\phi_g)&=\left(\nabla^{\mathcal{R}}_{\mathcal{P}} E(\phi),\text{Proj}^{L^2}_{\phi_g^*}(\phi-\phi_g^*)\right)_{\mathcal{P}_{\phi}}\\
		-&\frac{1}{2}\left\langle \big(E''(\phi)-\lambda_{\phi}\mathcal{I}\big)\text{Proj}^{L^2}_{\phi_g^*}(\phi-\phi_g^*),\text{Proj}^{L^2}_{\phi_g^*}(\phi-\phi_g^*)\right\rangle+o\left(\|\phi-\phi_g^*\|^2_{H^1}\right).
	\end{align}
	Based on {\bf Proposition \ref{Property-of-E''}}-$(iii)$, {\bf Proposition \ref{Property-P}}-$(iii)$, and {\bf (A6)}-$(iii)$, the following estimations hold
	\begin{align*}
		\left\langle \big(E''(\phi)-E''(\phi_g^*)\big)\text{Proj}^{L^2}_{\phi_g^*}(\phi-\phi_g^*),\text{Proj}^{L^2}_{\phi_g^*}(\phi-\phi_g^*)\right\rangle&=o\left(\|\phi-\phi_g^*\|^2_{H^1}\right),\\
		\left\langle \big(\lambda_{\phi_g^*}\mathcal{I}-\lambda_{\phi}\mathcal{I}\big)\text{Proj}^{L^2}_{\phi_g^*}(\phi-\phi_g^*),\text{Proj}^{L^2}_{\phi_g^*}(\phi-\phi_g^*)\right\rangle&=o\left(\|\phi-\phi_g^*\|^2_{H^1}\right),\\
		\left\langle\big(\mathcal{P}_{\phi}-\mathcal{P}_{\phi_g^*}\big)\text{Proj}^{L^2}_{\phi_g^*}(\phi-\phi_g^*),\text{Proj}^{L^2}_{\phi_g^*}(\phi-\phi_g^*)\right\rangle&=o\left(\|\phi-\phi_g^*\|^2_{H^1}\right).
	\end{align*}
	According to {\bf Proposition \ref{Property-P}}-$(i)$ and $\text{Proj}^{L^2}_{\phi_g^*}(\phi-\phi_g^*)\in N_{\phi_g^*}\mathcal{M}$, the following lower bound estimate holds
	\begin{align*}
		\frac{\left\langle\big( E''(\phi_g^*)-\lambda_{g}\mathcal{I}\big)\text{Proj}^{L^2}_{\phi_g^*}(\phi-\phi_g^*),\text{Proj}^{L^2}_{\phi_g^*}(\phi-\phi_g^*)\right\rangle}{\left\langle\mathcal{P}_{\phi_g^*}\text{Proj}^{L^2}_{\phi_g^*}(\phi-\phi_g^*),\text{Proj}^{L^2}_{\phi_g^*}(\phi-\phi_g^*)\right\rangle}\ge\mu.
	\end{align*}
	In summary, the estimate we want is derived
	\begin{align*}
		-\frac{1}{2}&\left\langle \big(E''(\phi)-\lambda_{\phi}\mathcal{I}\big)\text{Proj}^{L^2}_{\phi_g^*}(\phi-\phi_g^*),\text{Proj}^{L^2}_{\phi_g^*}(\phi-\phi_g^*)\right\rangle\\
		&\qquad\qquad\qquad\le-\frac{\mu}{2}\left\langle\mathcal{P}_{\phi}\text{Proj}^{L^2}_{\phi_g^*}(\phi-\phi_g^*),\text{Proj}^{L^2}_{\phi_g^*}(\phi-\phi_g^*)\right\rangle
		+o\left(\|\phi-\phi_g^*\|^2_{H^1}\right).
	\end{align*}
	Combining the above inequality with \eqref{P-L2}, we get
	\begin{align}\label{P-L3}
		\nonumber E(\phi)-E(\phi_g)&\le\left(\nabla^{\mathcal{R}}_{\mathcal{P}} E(\phi),\text{Proj}^{L^2}_{\phi_g^*}(\phi-\phi_g^*)\right)_{\mathcal{P}_{\phi}}\\
		&-\frac{\mu}{2}\left(\text{Proj}^{L^2}_{\phi_g^*}(\phi-\phi_g^*),\text{Proj}^{L^2}_{\phi_g^*}(\phi-\phi_g^*)\right)_{\mathcal{P}_{\phi}}+o\left(\|\phi-\phi_g^*\|^2_{H^1}\right).
	\end{align}
	By {\bf Lemma \ref{Transfer}} and {\bf (A6)}-$(ii)$, we know that 
	\begin{align}\label{phi_g^*-phi_g}
		\|\phi-\phi_g^*\|_{{H^1}}\le C\|\phi-\phi_g^*\|_{\mathcal{P}_{\phi}}\le C_{\phi}\|\phi-\phi_g^*\|_{H^1}\le C_{\phi} \|\phi-\phi_g\|_{H^1}. 
	\end{align}
	Recalling \eqref{P-L1}, then for all sufficiently small $\varepsilon$, there exists $\sigma$ such that for any $\phi \in \mathcal{B}_{\sigma}(\phi_g)$, we have
	\begin{align}\label{P-L4}
		\left|o\left(\|\phi-\phi_g^*\|^2_{H^1}\right)\right|\le\frac{\varepsilon}{2}\left\|\text{Proj}^{L^2}_{\phi_g^*}(\phi-\phi_g^*)\right\|^2_{\mathcal{P}_{\phi}}.
	\end{align}
	Then, by \eqref{P-L3}, the Polyak-\L ojasiewicz inequality is deduced as follows
	\begin{align*}
		&E(\phi)-E(\phi_g)\\
		\le&\left(\nabla^{\mathcal{R}}_{\mathcal{P}} E(\phi),\text{Proj}^{L^2}_{\phi_g^*}(\phi-\phi_g^*)\right)_{\mathcal{P}_{\phi}}-\frac{\mu-\varepsilon}{2}\left(\text{Proj}^{L^2}_{\phi_g^*}(\phi-\phi_g^*),\text{Proj}^{L^2}_{\phi_g^*}(\phi-\phi_g^*)\right)_{\mathcal{P}_{\phi}}\\
		\le&\sup\limits_{v\in H_0^1(\mathcal{D})}\Bigg(\left( \nabla^{\mathcal{R}}_{\mathcal{P}} E(\phi),v\right)_{\mathcal{P}_{\phi}}-\frac{\mu-\varepsilon}{2}(v,v)_{\mathcal{P}_{\phi}}\Bigg)
		=\frac{1}{2(\mu-\varepsilon)}\left\|\nabla^{\mathcal{R}}_{\mathcal{P}} E(\phi)\right\|^2_{\mathcal{P}_{\phi}}.
	\end{align*}
\end{proof}

In order to obtain the exact rate of local convergence, we need to derive the exact local energy dissipation as follows. For brevity, we denote $\widetilde{\phi}^{n+1}$ by $\widetilde{\phi}^{n+1}=\phi^n+\tau_nd_n$.
\begin{lemma}\label{C_t}
	Let $E$ be a
	Morse-Bott functional on $\mathcal{S}$. For any $\phi_g\in\mathcal{S}$, and for every sufficiently small $\varepsilon>0$, there exists $\sigma>0$ such that for any {$\phi^n \in \mathcal{B}_{\sigma}(\phi_g)$}, the local energy dissipation is estimated by:
	\begin{align*}
		E(\phi^{n+1})-E(\phi^n)\le-C_\tau\|d_n\|^2_{\mathcal{P}_{\phi^n}}\quad for\ all\  \tau\in\big(0,2/(L+\varepsilon)\big),
	\end{align*} 
	where 
	$C_{\tau}=\tau-\frac{\tau^2}{2}(L+\varepsilon)$.
	In particular, the optimal upper bound is obtained when $\tau=1/(L+\varepsilon)$, i.e.,
	\begin{align*}
		E(\phi^{n+1})-E(\phi^n)\le-\frac{1}{2(L+\varepsilon)}\|d_n\|^2_{\mathcal{P}_{\phi^n}}.
	\end{align*} 
\end{lemma}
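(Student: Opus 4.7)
The plan is to apply the third-order Taylor expansion (Proposition~\ref{Property-of-E''}-(iv)) to the increment $w := \phi^{n+1} - \phi^n$ induced by the retraction, extract the linear descent term $-\tau\|d_n\|_{\mathcal{P}_{\phi^n}}^2$, and upper-bound the resulting ``Riemannian Hessian'' quadratic form by $(L+\varepsilon)\|d_n\|_{\mathcal{P}_{\phi^n}}^2$ via a Morse--Bott spectral analysis anchored at a nearby $\phi_g^* \in \mathcal{S}$ produced by Lemma~\ref{Transfer}.

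First, since $d_n \in T_{\phi^n}\mathcal{M}$ gives $\|\phi^n+\tau d_n\|_{L^2}^2 = 1+\tau^2\|d_n\|_{L^2}^2$, write $\phi^{n+1} = \kappa_n(\phi^n+\tau d_n)$ with $\kappa_n = (1+\tau^2\|d_n\|_{L^2}^2)^{-1/2}$ and $|\kappa_n-1|\le\tfrac12\tau^2\|d_n\|_{L^2}^2$. Substitute $w = \tau\kappa_n d_n + (\kappa_n-1)\phi^n$ into Proposition~\ref{Property-of-E''}-(iv). The identity $(\nabla^{\mathcal{R}}_{\mathcal{P}} E(\phi^n), v)_{\mathcal{P}_{\phi^n}} = \langle E'(\phi^n), v\rangle$ for $v \in T_{\phi^n}\mathcal{M}$ converts the leading linear contribution into $-\tau\|d_n\|_{\mathcal{P}_{\phi^n}}^2$, while the factor $(\kappa_n-1)\langle E'(\phi^n),\phi^n\rangle = (\kappa_n-1)\widetilde\lambda_{\phi^n} = -\tfrac{\tau^2}{2}\widetilde\lambda_{\phi^n}\|d_n\|_{L^2}^2 + O(\tau^4)$ completes the Riemannian Hessian structure, yielding
\[
E(\phi^{n+1}) - E(\phi^n) \le -\tau\|d_n\|_{\mathcal{P}_{\phi^n}}^2 + \tfrac{\tau^2}{2}\bigl\langle\bigl(E''(\phi^n) - \widetilde\lambda_{\phi^n}\mathcal{I}\bigr) d_n, d_n\bigr\rangle + R,
\]
where $R$ collects the cubic remainder $C_{\phi^n,w}\|w\|_{H^1}^3$ and the higher-order $(\kappa_n-1)$ corrections. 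Because $\nabla^{\mathcal{R}}_{\mathcal{P}} E$ vanishes on $\mathcal{S}$ and is locally Lipschitz (Proposition~\ref{Property-P}-(iii)), $\|d_n\|_{H^1}$ can be made arbitrarily small by shrinking $\sigma$, so $|R| \le \tfrac{\tau^2}{2}\varepsilon'\|d_n\|_{\mathcal{P}_{\phi^n}}^2$ by the coercivity of $\mathcal{P}_{\phi^n}$.

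The crux is to show $\langle(E''(\phi^n) - \widetilde\lambda_{\phi^n}\mathcal{I})d_n, d_n\rangle \le (L+\varepsilon')\|d_n\|_{\mathcal{P}_{\phi^n}}^2$. Pick $\phi_g^* \in \mathcal{S}$ with $\|\phi^n - \phi_g^*\|_{H^1} \le C\sigma$ from Lemma~\ref{Transfer}, and note $\widetilde\lambda_{\phi_g^*} = \lambda_g$. Propositions~\ref{Property-of-E''}-(iii), \ref{Property-P}-(iii) and \textbf{(A6)}-(iii) yield $\langle(E''(\phi^n)-\widetilde\lambda_{\phi^n}\mathcal{I})d_n,d_n\rangle = \langle(E''(\phi_g^*)-\lambda_g\mathcal{I})d_n,d_n\rangle + o(1)\|d_n\|_{H^1}^2$ together with $\|d_n\|_{\mathcal{P}_{\phi^n}}^2 = (1+o(1))\|d_n\|_{\mathcal{P}_{\phi_g^*}}^2$, reducing the task to bounding $\langle(E''(\phi_g^*)-\lambda_g\mathcal{I})d_n, d_n\rangle$ by $L\|d_n\|_{\mathcal{P}_{\phi_g^*}}^2$ up to an $o(1)$ term. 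Since $T_{\phi_g^*}\mathcal{S} \subset \ker(E''(\phi_g^*) - \lambda_g\mathcal{I})$, the quadratic form is invariant under subtracting any $T_{\phi_g^*}\mathcal{S}$-component, so replace $d_n$ by its $L^2$-projection orthogonal to $T_{\phi_g^*}\mathcal{S}$; further split the result as $a\phi_g^* + \widehat d_n^\perp$ with $\widehat d_n^\perp \in N_{\phi_g^*}\mathcal{M}$ and $|a| = |(d_n,\phi_g^*-\phi^n)_{L^2}| \le C\sigma\|d_n\|_{L^2}$ (using $d_n\in T_{\phi^n}\mathcal{M}$). Proposition~\ref{Property-P}-(i) then delivers $\langle(E''(\phi_g^*)-\lambda_g\mathcal{I})\widehat d_n^\perp, \widehat d_n^\perp\rangle \le L\|\widehat d_n^\perp\|_{\mathcal{P}_{\phi_g^*}}^2$, while the $a\phi_g^*$ piece and cross terms are $O(\sigma)\|d_n\|_{\mathcal{P}_{\phi_g^*}}^2$ by Proposition~\ref{Property-of-E''}-(ii) and coercivity. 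Shrinking $\sigma$ produces the bound with constant $L+\varepsilon$; maximizing $C_\tau = \tau - \tfrac{\tau^2}{2}(L+\varepsilon)$ in $\tau$ gives $\tau=1/(L+\varepsilon)$ and the sharp constant $\tfrac{1}{2(L+\varepsilon)}$.

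The principal obstacle is that the sup defining $L$ in Proposition~\ref{Property-P}-(i) is taken over the $L^2$-defined subspace $N_{\phi_g^*}\mathcal{M}$, whereas $d_n$ is only $\mathcal{P}_{\phi_g^*}$-close to this subspace through the orthogonality identities $(d_n,i\phi^n)_{\mathcal{P}_{\phi^n}}=(d_n,i\mathcal{L}_z\phi^n)_{\mathcal{P}_{\phi^n}}=0$ and the $L^2$-near-orthogonality to $\phi_g^*$. A careful bookkeeping of how this $L^2$-orthogonal decomposition interacts with the $\mathcal{P}_{\phi_g^*}$-norm is therefore essential to guarantee that the Pythagorean defect between $\widehat d_n^\perp$ and $d_n$ is genuinely $O(\sigma)\|d_n\|_{\mathcal{P}_{\phi_g^*}}^2$ rather than a constant inflation of $L$, which would otherwise wreck the sharp step-size choice $\tau=1/(L+\varepsilon)$.
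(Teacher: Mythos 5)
Your first two paragraphs are essentially the paper's own argument written out in more detail: the paper likewise expands $E$ at $\phi^n$ to get $E(\phi^{n+1})-E(\phi^n)=-\tau\|d_n\|^2_{\mathcal{P}_{\phi^n}}+\tfrac{\tau^2}{2}\langle(E''(\phi^n)-\lambda_{\phi^n}\mathcal{I})d_n,d_n\rangle+o(\|d_n\|^2_{H^1})$, controls the retraction error through {\bf Proposition \ref{Property-P}}, uses $\|d_n\|_{H^1}=\mathcal{O}(\|\phi^n-\phi_g\|_{H^1})$ to absorb the remainder into $\tfrac{\tau^2}{2}\varepsilon\|d_n\|^2_{\mathcal{P}_{\phi^n}}$, and freezes the coefficients at the reference point via {\bf Proposition \ref{Property-of-E''}}-$(iii)$, {\bf Proposition \ref{Property-P}}-$(iii)$ and {\bf (A6)}-$(iii)$. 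Up to there your proposal is fine and matches the paper.

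The divergence, and the genuine gap, is in the decisive spectral bound. You discard the $L^2$-component $s_n$ of $d_n$ along $T_{\phi_g^*}\mathcal{S}$ (harmless on the left-hand side, since $T_{\phi_g^*}\mathcal{S}\subset\ker(E''(\phi_g^*)-\lambda_g\mathcal{I})$ and the operator is symmetric), apply {\bf Proposition \ref{Property-P}}-$(i)$ to $\widehat d_n^\perp\in N_{\phi_g^*}\mathcal{M}$, and then need $\|\widehat d_n^\perp\|^2_{\mathcal{P}_{\phi_g^*}}\le(1+O(\sigma))\|d_n\|^2_{\mathcal{P}_{\phi_g^*}}$ — but you never establish this, and it does not follow from what you have. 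The splitting is $L^2$-orthogonal, not $\mathcal{P}$-orthogonal, and $s_n$ is not shown to be $o(\|d_n\|_{H^1})$; the identities you do have, $(d_n,i\phi^n)_{\mathcal{P}_{\phi^n}}=(d_n,i\mathcal{L}_z\phi^n)_{\mathcal{P}_{\phi^n}}=0$ (correct, by phase and rotation invariance of $E$), only say that $d_n$ is approximately $\mathcal{P}$-orthogonal to the kernel, and removing the $L^2$-kernel component of such a vector can \emph{increase} its $\mathcal{P}$-norm by a fixed factor: in the two-dimensional model $B=\mathrm{diag}(0,b)$ with $L^2$ the Euclidean product and $\mathcal{P}=\bigl(\begin{smallmatrix}1&c\\ c&1\end{smallmatrix}\bigr)$, the Rayleigh quotient on the $\mathcal{P}$-orthogonal complement of the kernel is $b/(1-c^2)>b=L$. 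So, as written, your bookkeeping only delivers a constant of the form $L/(1-c^2)+\varepsilon$ unless you additionally prove either that the kernel component of $d_n$ is $o(\|d_n\|_{H^1})$ or that the relevant quotient over the $\mathcal{P}$-orthogonal complement does not exceed $L$; you yourself label this ``the principal obstacle'' and leave it open, which breaks the sharp constant and hence the step-size choice $\tau=1/(L+\varepsilon)$. The paper does not go through {\bf Lemma \ref{Transfer}} or any $L^2$-splitting here at all: it replaces $d_n$ by its $\mathcal{P}_{\phi_g}$-orthogonal tangent projection $\mathrm{Proj}^{\mathcal{P}_{\phi_g}}_{\phi_g}d_n$ (continuity of the projections, {\bf Proposition \ref{Property-P}}-$(iii)$) and invokes {\bf Proposition \ref{Property-P}}-$(i)$ directly, so your route is both different from the paper's and, at its crux, incomplete.
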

\begin{proof}
	Using {\bf Proposition \ref{Property-P}}-$(iii)$, the estimates of $\phi^{n+1}-\phi^n$ and $\|d_n\|_{H^1}$ are given by
	\begin{align}
		\nonumber\phi^{n+1}-\phi^n&=\widetilde{\phi}^{n+1}-\phi^n+\phi^{n+1}-\widetilde{\phi}^{n+1}\\
		\nonumber&=\widetilde{\phi}^{n+1}-\phi^n+o\left(\big\|\widetilde{\phi}^{n+1}-\phi^n\big\|_{H^1}\right)\widetilde{\phi}^{n+1}\\
		&=\tau d_n+o\left(\|d_n\|_{H^1}\right)\widetilde{\phi}^{n+1}\label{C_t1},\\
		\|d_n\|_{H^1}&=\mathcal{O}\left(\|\phi^n-\phi_g\|_{H^1}\right)\label{C_t2}.
	\end{align} 
	Under Taylor expansion at $\phi^n$, we have
	\begin{align*}
		E(\phi^{n+1})-E(\phi^n)=-\tau\|d_n\|^2_{\mathcal{P}_{\phi^n}}+\frac{\tau^2}{2}\left\langle \big(E''(\phi^{n})-\lambda_{\phi^n}\mathcal{I}\big)d_n,d_n\right\rangle+o\left(\|d_n\|^2_{H^1}\right).
	\end{align*}
	Similarly, we estimate the second term on the right of the above equation.
	According to {\bf Proposition \ref{Property-of-E''}}-$(iii)$, {\bf Proposition \ref{Property-P}}-$(iii)$, and the continuity of $\mathcal{P}_{\phi}$, we derive
	\begin{align*}
		&\left\langle \big(E''(\phi^n)-\lambda_{\phi^n}\mathcal{I}\big)d_n,d_n\right\rangle-\left\langle \big(E''(\phi_g)-\lambda_g\big)d_n,d_n\right\rangle=o\left(\|d_n\|^2_{H^1}\right),\\
		&\left\langle\big(\mathcal{P}_{\phi^n}-\mathcal{P}_{\phi_g}\big)d_n,d_n\right\rangle=o\left(\|d_n\|^2_{H^1}\right).
	\end{align*}
	By $d_n\in T_{\phi^n}\mathcal{M}$ and the continuity of $\text{Proj}_{\phi}^{\mathcal{P}_{\phi}}$, we get
	\begin{align*}
		d_n=\text{Proj}^{\mathcal{P}_{\phi^n}}_{\phi^n}d_n=\text{Proj}^{\mathcal{P}_{\phi_g}}_{\phi_g}d_n+o(d_n).
	\end{align*}
	This shows that 
	\begin{align*}
		\left\langle \big(E''(\phi_g)-\lambda_g\big)d_n,d_n\right\rangle=\left\langle \big(E''(\phi_g)-\lambda_g\big)\text{Proj}^{\mathcal{P}_{\phi_g}}_{\phi_g}d_n,\text{Proj}^{\mathcal{P}_{\phi_g}}_{\phi_g}d_n\right\rangle+o\left(\|d_n\|^2_{H^1}\right).
	\end{align*}
	Using {\bf Proposition \ref{Property-P}}-$(i)$, the following upper bound estimate holds
	\begin{align*}
		\left\langle \big(E''(\phi_g)-\lambda_g\big)d_n,d_n\right\rangle\le L\|d_n\|^2_{\mathcal{P}_{\phi_g}}.
	\end{align*}
	Combining the above estimates, we get 
	\begin{align*}
		\frac{\tau^2}{2}\left\langle \big(E''(\phi^n)-\lambda_{\phi^n}\big)d_n,d_n\right\rangle
		\le \frac{\tau^2}{2}L\|d_n\|^2_{\mathcal{P}_{\phi^n}}+o	\left(\tau^2\|d_n\|^2_{H^1}\right).
	\end{align*}
	The local estimate is obtained from the above result:
	\begin{align*}
		E(\phi^{n+1})-E(\phi^n)&\le-\tau\|d_n\|^2_{\mathcal{P}_{\phi^n}}+\frac{\tau^2}{2}L\|d_n\|^2_{\mathcal{P}_{\phi^n}}+o\left(\tau^2\|d_n\|^2_{H^1}\right).
	\end{align*}
	{By \textbf{Proposition~\ref{Property-P}}-$(iii)$, for any $\phi^n \in \mathcal{B}_{\sigma}(\phi_g)$, we obtain
	\[
	\|d_n\|_{H^1}\le C\|\phi^n-\phi_g\|_{H^1}\le C\sigma.
	\]
	Combining this with \textbf{(A6)}-$(ii)$, we deduce that for all sufficiently small $\varepsilon > 0$, there exists $\sigma > 0$ such that for any $\phi^n \in \mathcal{B}_{\sigma}(\phi_g)$,}
	\begin{align*}
		\left|o\left(\tau^2\|d_n\|^2_{H^1}\right)\right|\le\frac{\tau^2}{2}\varepsilon\|d_n\|^2_{\mathcal{P}_{\phi^n}}.
	\end{align*}
	Consequently, the conclusion is obtained
	\begin{align*}
		E(\phi^{n+1})-E(\phi^n)
		&\le\left(\frac{\tau^2L}{2}-\tau\right)\|d_n\|^2_{\mathcal{P}_{\phi^n}}+o\left(\tau^2\|d_n\|^2_{H^1}\right)
		\le \frac{\tau^2(L+\varepsilon)-2\tau}{2} \|d_n\|^2_{\mathcal{P}_{\phi^n}}\\
		&=-C_{\tau}\|d_n\|^2_{\mathcal{P}_{\phi^n}}
		\le-\sup\limits_{\tau\in\big(0,2/(L+\varepsilon)\big)}\left(\tau-\frac{\tau^2}{2}(L+\varepsilon)\right)\|d_n\|^2_{\mathcal{P}_{\phi^n}}\\
		&=-\frac{1}{2(L+\varepsilon)}\|d_n\|^2_{\mathcal{P}_{\phi^n}},\qquad {\rm when}\qquad \tau=1/(L+\varepsilon).
	\end{align*}
\end{proof}

To prove {\bf Theorem \ref{Var-P-convergence}}, we define the operator \(g(\phi) :=\nabla^{\mathcal{R}}_{\mathcal{P}} E(\phi)\), and let \(\mathcal{J}_{\phi_g}: H_0^1(\mathcal{D}) \to N_{\phi_g}\mathcal{M}\) denote the $\mathcal{P}_{\phi_g}$-orthogonal projection from $H^1_0(\mathcal{D})$ onto $N_{\phi_g}\mathcal{M}$.  

The lemma that follows shows the regularity of \(g\). 
\begin{lemma}\label{F-diff}
	For any \(\mathcal{P}_{\phi}\), \(g(\phi)\) is real Fr\'echet differentiable at \(\phi_g\), and the derivative \(g'(\phi_g)\) is given by
	{\rm\begin{align*}
			g'(\phi_g) = \text{Proj}^{ \mathcal{P}_{\phi_g}}_{\phi_g} \mathcal{P}^{-1}_{\phi_g} \left(E''(\phi_g) - \lambda_g \mathcal{I}\right).
	\end{align*}}
\end{lemma}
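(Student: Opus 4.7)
The plan is to rewrite $g(\phi) = \nabla^{\mathcal{R}}_{\mathcal{P}} E(\phi)$ using \eqref{Riem-Grad} as $g(\phi) = \mathcal{P}^{-1}_{\phi}\bigl(E'(\phi) - \lambda_{\phi}\mathcal{I}\phi\bigr)$ and then exploit the first-order necessary condition $E'(\phi_g) = \lambda_g\mathcal{I}\phi_g$, which immediately gives $g(\phi_g) = 0$. It then suffices to expand $g(\phi_g + h)$ to first order in $h \in H^1_0(\mathcal{D})$ and read off the linear part.

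First, I would split
\[
g(\phi_g + h) = \mathcal{P}^{-1}_{\phi_g}\bigl[E'(\phi_g+h) - \lambda_{\phi_g+h}\mathcal{I}(\phi_g+h)\bigr] + \bigl(\mathcal{P}^{-1}_{\phi_g+h} - \mathcal{P}^{-1}_{\phi_g}\bigr)\bigl[E'(\phi_g+h) - \lambda_{\phi_g+h}\mathcal{I}(\phi_g+h)\bigr].
\]
The second piece is treated as a pure remainder: the resolvent-type identity $\mathcal{P}^{-1}_{\phi_g+h} - \mathcal{P}^{-1}_{\phi_g} = -\mathcal{P}^{-1}_{\phi_g+h}(\mathcal{P}_{\phi_g+h} - \mathcal{P}_{\phi_g})\mathcal{P}^{-1}_{\phi_g}$ together with {\bf (A6)}-$(ii)$-$(iii)$ shows that its norm from $H^{-1}$ to $H^1_0$ is $O(\|h\|_{L^{p_1}})$, while the bracket is $O(\|h\|_{H^1})$ in $H^{-1}$ since it vanishes at $\phi_g$ and is locally Lipschitz by {\bf Proposition \ref{Property-P}}-$(iii)$; hence this contribution is $o(\|h\|_{H^1})$ in $H^1$.

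For the main term, I would Taylor-expand $E'(\phi_g+h) = E'(\phi_g) + E''(\phi_g)h + o(\|h\|_{H^1})$ in $H^{-1}$ (via the $C^2$ regularity collected in {\bf Proposition \ref{Property-of-E''}}) and compute $(D\lambda)(\phi_g)[h]$ from the quotient $\lambda_\phi = N(\phi)/D(\phi)$ with $N(\phi)=(\phi,\mathcal{P}^{-1}_{\phi}\mathcal{H}_{\phi}\phi)_{L^2}$ and $D(\phi)=(\phi,\mathcal{P}^{-1}_{\phi}\mathcal{I}\phi)_{L^2}$. The decisive observation—which avoids having to postulate Fr\'echet differentiability of $\mathcal{P}_\phi$ itself—is that in the quotient-rule combination $N'(\phi_g)[h] - \lambda_g D'(\phi_g)[h]$ the contributions coming from the variation of $\mathcal{P}^{-1}_\phi$ cancel identically, precisely because $E'(\phi_g) = \lambda_g\mathcal{I}\phi_g$. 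Using symmetry of $\mathcal{P}^{-1}_{\phi_g}$ and $E''(\phi_g)$, what remains is
\[
(D\lambda)(\phi_g)[h] = \frac{\bigl(\phi_g,\,\mathcal{P}^{-1}_{\phi_g}(E''(\phi_g)-\lambda_g\mathcal{I})h\bigr)_{L^2}}{\bigl(\phi_g,\,\mathcal{P}^{-1}_{\phi_g}\mathcal{I}\phi_g\bigr)_{L^2}}.
\]

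Assembling the two expansions, the linear part of $g(\phi_g+h)$ becomes
\[
\mathcal{P}^{-1}_{\phi_g}(E''(\phi_g)-\lambda_g\mathcal{I})h - (D\lambda)(\phi_g)[h]\,\mathcal{P}^{-1}_{\phi_g}\mathcal{I}\phi_g,
\]
which I would then recognize, via the projection formula \eqref{Proj-P} applied to $v = \mathcal{P}^{-1}_{\phi_g}(E''(\phi_g)-\lambda_g\mathcal{I})h$, as $\text{Proj}^{\mathcal{P}_{\phi_g}}_{\phi_g}\mathcal{P}^{-1}_{\phi_g}(E''(\phi_g)-\lambda_g\mathcal{I})h$, which is exactly the claimed derivative. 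The main obstacle will not be the algebra but verifying that every remainder is genuinely $o(\|h\|_{H^1})$ in the $H^1$-norm (as required for Fr\'echet rather than merely G\^ateaux differentiability); this is precisely where the Lipschitz-in-$L^p$ bounds of {\bf (A6)}-$(iii)$ and {\bf Proposition \ref{Property-of-E''}}-$(iii)$, combined with the Sobolev embedding $H^1_0(\mathcal{D}) \hookrightarrow L^p(\mathcal{D})$ for $p \in [1,6)$, do the work.
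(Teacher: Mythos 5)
Your proposal is correct, but it takes a noticeably different route from the paper. The paper never differentiates $\lambda_{\phi}$ at all: it first uses the fact that $\text{Proj}^{\mathcal{P}_{\phi}}_{\phi}$ annihilates $\mathcal{P}^{-1}_{\phi}\mathcal{I}\phi$ to rewrite $g(\phi)=\text{Proj}^{\mathcal{P}_{\phi}}_{\phi}\mathcal{P}^{-1}_{\phi}\left(\mathcal{H}_{\phi}\phi-\lambda_g\mathcal{I}\phi\right)$ with the \emph{fixed} constant $\lambda_g$, so that the inner factor vanishes at $\phi_g$, expands as $\left(E''(\phi_g)-\lambda_g\mathcal{I}\right)h+o(h)$, and then only local Lipschitz continuity of $\text{Proj}^{\mathcal{P}_{\phi}}_{\phi}$ and $\mathcal{P}^{-1}_{\phi}$ (estimate \eqref{L-Proj} and {\bf (A6)}) is needed to freeze them at $\phi_g$. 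You instead keep $\lambda_{\phi}$, compute its derivative, and reassemble the projection formula \eqref{Proj-P} at the end; the algebra does close up to exactly the claimed operator, and your remainder estimates (resolvent identity plus {\bf (A6)}-$(ii)$,$(iii)$, Lipschitz continuity of the bracket which vanishes at $\phi_g$) are the right tools, so the argument is sound. One caveat: your phrasing via the quotient rule with $N'(\phi_g)[h]$ and $D'(\phi_g)[h]$ is slightly at odds with your stated goal of not assuming differentiability of $\mathcal{P}_{\phi}$, since $N$ and $D$ need not be individually differentiable; to make the cancellation rigorous you should expand the combined quantity
\begin{align*}
\lambda_{\phi_g+h}-\lambda_g=\frac{\left(\phi_g+h,\ \mathcal{P}^{-1}_{\phi_g+h}\left(E'(\phi_g+h)-\lambda_g\mathcal{I}(\phi_g+h)\right)\right)_{L^2}}{\left(\phi_g+h,\ \mathcal{P}^{-1}_{\phi_g+h}\mathcal{I}(\phi_g+h)\right)_{L^2}}
\end{align*}
directly, where the numerator's argument is already $O(h)$ by the first-order condition, so replacing $\mathcal{P}^{-1}_{\phi_g+h}$ by $\mathcal{P}^{-1}_{\phi_g}$ costs only $o(h)$ — which is precisely the mechanism you describe, just applied before rather than after a formal differentiation. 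The trade-off between the two proofs: the paper's use of the projection's kernel property makes the $\lambda_{\phi}$-dependence disappear for free and yields a three-line expansion, while your version is longer but makes explicit the first-order behavior of $\lambda_{\phi}$, which is of some independent interest.
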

\begin{proof}
	Noting that 
	\begin{align*}
		g(\phi)= \text{Proj}^{\mathcal{P}_{\phi}}_{\phi}\mathcal{P}_{\phi}^{-1}\mathcal{H}_{\phi}\phi=\text{Proj}^{\mathcal{P}_{\phi}}_{\phi}\mathcal{P}_{\phi}^{-1}\left(\mathcal{H}_{\phi}\phi-\lambda_g\mathcal{I}\phi\right)\quad\text{and}\quad \mathcal{H}_{\phi_g}\phi_g-\lambda_g\mathcal{I}\phi_g=0,
	\end{align*}
	combined with the continuity of $\text{Proj}^{\mathcal{P}_{\phi}}_{\phi}$ (see \eqref{L-Proj}) and $\mathcal{P}_{\phi}$ at $\phi_g$, for all $h\in H_0^1(\mathcal{D})$, we obtain
	\begin{align*}
		g(\phi_g+h)-g(\phi_g)&=\text{Proj}^{\mathcal{P}_{\phi_g+h}}_{\phi_g+h}\mathcal{P}_{\phi_g+h}^{-1}\left(\mathcal{H}_{\phi_g+h}(\phi_g+h)-\lambda_g\mathcal{I}(\phi_g+h)\right)\\
		&=\text{Proj}^{\mathcal{P}_{\phi_g+h}}_{\phi_g+h}\mathcal{P}_{\phi_g+h}^{-1}\left(E''(\phi_g)h-\lambda_g\mathcal{I}h+o\left(h\right)\right)\\
		&=\text{Proj}^{\mathcal{P}_{\phi_g}}_{\phi_g}\mathcal{P}_{\phi_g}^{-1}\left(E''(\phi_g)-\lambda_g\mathcal{I}\right)h+o\left(h\right).
	\end{align*}
	This suggests that for any $\mathcal{P}_{\phi}$,
	\begin{align*}
		g'(\phi_g)h&= \text{Proj}^{\mathcal{P}_{\phi_g}}_{\phi_g}\mathcal{P}^{-1}_{\phi_g}\left(E''(\phi_g)-\lambda_g\mathcal{I}\right)h.
	\end{align*}
\end{proof}

We further define \(\mathcal{G}_{\tau}(\phi_g): N_{\phi_g}\mathcal{M} \to N_{\phi_g}\mathcal{M}\) by
\begin{align*}
	\mathcal{G}_{\tau}(\phi_g) &:=\mathcal{J}_{\phi_g} \left(I - \tau g'(\phi_g)\right)\big|_{N_{\phi_g}\mathcal{M}}= \mathcal{J}_{\phi_g} \left(I - \tau\mathcal{P}^{-1}_{\phi_g} \left(E''(\phi_g) - \lambda_g \mathcal{I}\right)\right)\Big|_{N_{\phi_g}\mathcal{M}}.
\end{align*}
The spectrum characterization of $\mathcal{G}_{\tau}(\phi_g)$ is given as follows.
\begin{lemma}\label{G-spectrum}
	Let $E$ be a
	Morse-Bott functional on $\mathcal{S}$. Then, the spectrum of $\mathcal{G}_{\tau}(\phi_g)$ fulfills
	\begin{align*}
		\sigma\left(\mathcal{G}_{\tau}(\phi_g)\right)\subset\big\{1-\tau,1-\tau\mu_1,1-\tau\mu_2,\cdots\big\},
	\end{align*}
	where $(\mu_i,v_i)\in\mathbb{R}\backslash\{0\}\times N_{\phi_g}\mathcal{M}\backslash\{0\}$ denotes the eigenpairs to the eigenvalue problem:
	\begin{align*}
		\mathcal{J}_{\phi_g}\mathcal{P}^{-1}_{\phi_g} \left(E''(\phi_g) - \lambda_g \mathcal{I}\right)v_i=\mu_iv_i.
	\end{align*}
	Furthermore, the spectral radius of $\mathcal{G}_{\tau}(\phi_g)$ is bounded by
	\begin{align*}
		\rho\left(\mathcal{G}_{\tau}(\phi_g)\right) \le\max\big\{|1-\tau\mu|,|1-\tau L|\big\}.
	\end{align*}
\end{lemma}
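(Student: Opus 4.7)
My plan is to reduce $\mathcal{G}_\tau(\phi_g)$ on $N_{\phi_g}\mathcal{M}$ to $I-\tau T'$ for a single self-adjoint operator $T'$, and then invoke Proposition~\ref{Property-P}-(i). Since $\mathcal{J}_{\phi_g}$ is the $\mathcal{P}_{\phi_g}$-orthogonal projection onto $N_{\phi_g}\mathcal{M}$, it acts as the identity on any $v\in N_{\phi_g}\mathcal{M}$. Substituting the formula for $g'(\phi_g)$ from Lemma~\ref{F-diff}, for $v\in N_{\phi_g}\mathcal{M}$ one gets
\[
\mathcal{G}_\tau(\phi_g)v = v - \tau\,\mathcal{J}_{\phi_g}\,\text{Proj}^{\mathcal{P}_{\phi_g}}_{\phi_g}\mathcal{P}_{\phi_g}^{-1}\bigl(E''(\phi_g)-\lambda_g\mathcal{I}\bigr)v.
\]
The key simplification is that the inner projection $\text{Proj}^{\mathcal{P}_{\phi_g}}_{\phi_g}$ can be dropped: by \eqref{Proj-P}, its correction term is a multiple of $\mathcal{P}_{\phi_g}^{-1}\mathcal{I}\phi_g$, and for any $w\in N_{\phi_g}\mathcal{M}\subset T_{\phi_g}\mathcal{M}$,
\[
\bigl(\mathcal{P}_{\phi_g}^{-1}\mathcal{I}\phi_g,w\bigr)_{\mathcal{P}_{\phi_g}} = \langle\mathcal{I}\phi_g,w\rangle = (\phi_g,w)_{L^2}=0,
\]
so $\mathcal{J}_{\phi_g}\mathcal{P}_{\phi_g}^{-1}\mathcal{I}\phi_g = 0$. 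This yields $\mathcal{G}_\tau(\phi_g)=I_{N_{\phi_g}\mathcal{M}}-\tau T'$ with $T':=\mathcal{J}_{\phi_g}\mathcal{P}_{\phi_g}^{-1}\bigl(E''(\phi_g)-\lambda_g\mathcal{I}\bigr)\big|_{N_{\phi_g}\mathcal{M}}$, exactly the operator whose eigenvalue problem appears in the statement.

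Next I would verify that $T'$ is self-adjoint on the real Hilbert space $(N_{\phi_g}\mathcal{M},(\cdot,\cdot)_{\mathcal{P}_{\phi_g}})$. Since $\mathcal{J}_{\phi_g}$ is a $\mathcal{P}_{\phi_g}$-orthogonal projection and both $v,w$ lie in $N_{\phi_g}\mathcal{M}$,
\[
(T'v,w)_{\mathcal{P}_{\phi_g}} = \bigl(\mathcal{P}_{\phi_g}^{-1}(E''(\phi_g)-\lambda_g\mathcal{I})v,w\bigr)_{\mathcal{P}_{\phi_g}} = \bigl\langle (E''(\phi_g)-\lambda_g\mathcal{I})v,w\bigr\rangle,
\]
and symmetry of $E''(\phi_g)-\lambda_g\mathcal{I}$ immediately gives $(T'v,w)_{\mathcal{P}_{\phi_g}}=(v,T'w)_{\mathcal{P}_{\phi_g}}$. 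Proposition~\ref{Property-P}-(i) then provides the two-sided Rayleigh-quotient bound
\[
\mu\le\frac{(T'v,v)_{\mathcal{P}_{\phi_g}}}{(v,v)_{\mathcal{P}_{\phi_g}}}=\frac{\langle(E''(\phi_g)-\lambda_g\mathcal{I})v,v\rangle}{\langle\mathcal{P}_{\phi_g}v,v\rangle}\le L\quad\text{for all } v\in N_{\phi_g}\mathcal{M}\setminus\{0\},
\]
so by the spectral theorem for bounded self-adjoint operators, $\sigma(T')\subset[\mu,L]$.

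Finally, the spectral mapping theorem gives $\sigma(\mathcal{G}_\tau(\phi_g))=\{1-\tau\nu:\nu\in\sigma(T')\}$; each non-zero point eigenvalue $\mu_i$ of $T'$ with eigenfunction $v_i\in N_{\phi_g}\mathcal{M}\setminus\{0\}$ contributes $1-\tau\mu_i$, while the listed value $1-\tau$ accommodates the case $\nu=1\in\sigma(T')\subset[\mu,L]$, which establishes the claimed inclusion. The spectral radius estimate then follows from
\[
\rho(\mathcal{G}_\tau(\phi_g))\le\sup_{\nu\in[\mu,L]}|1-\tau\nu|=\max\bigl\{|1-\tau\mu|,|1-\tau L|\bigr\},
\]
because $\nu\mapsto|1-\tau\nu|$ is convex and therefore attains its maximum on $[\mu,L]$ at an endpoint. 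The main obstacle is the algebraic reduction in the first paragraph: without the orthogonality $\mathcal{J}_{\phi_g}\mathcal{P}_{\phi_g}^{-1}\mathcal{I}\phi_g=0$, the compositions $\mathcal{J}_{\phi_g}\,\text{Proj}^{\mathcal{P}_{\phi_g}}_{\phi_g}$ would not collapse and $\mathcal{G}_\tau(\phi_g)$ would not manifestly be an affine function of a single self-adjoint operator, making the spectral theorem inapplicable; once this reduction is in place, the rest is a routine application of spectral calculus together with Proposition~\ref{Property-P}-(i).
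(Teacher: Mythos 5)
Your reduction of $\mathcal{G}_{\tau}(\phi_g)$ to $I-\tau T'$ with $T'=\mathcal{J}_{\phi_g}\mathcal{P}^{-1}_{\phi_g}\big(E''(\phi_g)-\lambda_g\mathcal{I}\big)\big|_{N_{\phi_g}\mathcal{M}}$, the verification that $T'$ is self-adjoint in the $(\cdot,\cdot)_{\mathcal{P}_{\phi_g}}$ inner product, and the conclusion $\sigma(T')\subset[\mu,L]$ via the numerical range are all correct, and they do yield the spectral radius bound $\rho(\mathcal{G}_{\tau}(\phi_g))\le\max\{|1-\tau\mu|,|1-\tau L|\}$ by the spectral mapping theorem; in fact this is cleaner than the paper's route, which obtains the bound from its spectrum list and therefore has to prove separately, via an approximating sequence for the compact part, that $1\in[\mu,L]$ so that the point $1-\tau$ is covered.

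However, there is a genuine gap in your treatment of the first assertion of the lemma. The claim $\sigma(\mathcal{G}_{\tau}(\phi_g))\subset\{1-\tau,1-\tau\mu_1,1-\tau\mu_2,\cdots\}$ says that, apart from the single value $1-\tau$, every spectral point comes from an \emph{eigenvalue} $\mu_i$ of $\mathcal{J}_{\phi_g}\mathcal{P}^{-1}_{\phi_g}(E''(\phi_g)-\lambda_g\mathcal{I})$ with eigenfunction in $N_{\phi_g}\mathcal{M}$. Self-adjointness plus $\sigma(T')\subset[\mu,L]$ does not give this: a bounded self-adjoint operator may have continuous spectrum filling a subinterval of $[\mu,L]$, in which case the spectrum of $I-\tau T'$ would contain points that are neither $1-\tau$ nor of the form $1-\tau\mu_i$ with $\mu_i$ an eigenvalue, and your sentence ``the listed value $1-\tau$ accommodates the case $\nu=1$'' does not repair this, since nothing forces the non-eigenvalue part of $\sigma(T')$ to be the single point $1$. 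The missing ingredient is exactly the paper's key step: one shows that $\widetilde{\mathcal{G}}_{\tau}:=\mathcal{G}_{\tau}(\phi_g)-(1-\tau)I|_{N_{\phi_g}\mathcal{M}}=\tau\,\mathcal{J}_{\phi_g}\mathcal{P}^{-1}_{\phi_g}\big(E''(\phi_g)-\mathcal{P}_{\phi_g}-\lambda_g\mathcal{I}\big)\big|_{N_{\phi_g}\mathcal{M}}$ is compact, using the smoothing estimate $\big\|\widetilde{\mathcal{G}}_{\tau}v\big\|_{H^1}\le C\|v\|_{L^{p}}$ (from {\bf (A6)}-$(iv)$ together with {\bf Proposition \ref{Property-P}}-$(ii)$) and the compact embedding $H_0^1(\mathcal{D})\hookrightarrow L^{p}(\mathcal{D})$ for $p<6$. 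Compactness forces $\sigma(\widetilde{\mathcal{G}}_{\tau})$ to consist of $0$ together with eigenvalues only, which after the shift gives the stated discrete inclusion; note this is precisely where the structural assumption {\bf (A6)}-$(iv)$ on the preconditioner enters, and your argument never uses it. Supplement your proof with this compactness step and the lemma is fully established; as written, only the radius bound is proved.
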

\begin{proof}
	Let $\widetilde{\mathcal{G}}_{\tau}:=\mathcal{G}_{\tau}(\phi_g)-(1-\tau)\mathcal{J}_{\phi_g}=\mathcal{G}_{\tau}(\phi_g)-(1-\tau)I|_{N_{\phi_g}\mathcal{M}}$.
	Since $\sigma\big(\widetilde{\mathcal{G}}_{\tau}\big)$ is only a shift $1-\tau$ with respect to $\sigma\left(\mathcal{G}_{\tau}(\phi_g)\right)$, the spectrum of $\mathcal{G}_{\tau}(\phi_g)$ is obtained by considering the spectrum of $\widetilde{\mathcal{G}}_{\tau}$. In fact, for any {uniformly} bounded sequence $\big\{v^n\big\}_{n\in\mathbb{N}}\subset N_{\phi_g}\mathcal{M}$, the sequence $\left\{\widetilde{\mathcal{G}}_{\tau}v^n\right\}_{n\in\mathbb{N}}$ contains a converging subsequence. By Rellich-Kondrachov embedding, we can extract a subsequence $\big\{v^{n_j}\big\}_{j\in\mathbb{N}}$ that converges to some $v^*\in N_{\phi_g}\mathcal{M}$ weakly in $H_0^1(\mathcal{D})$ and strongly in $L^p$ (with $1\le p<6$ for $d\le3$). Using {\bf (A6)}-$(iv)$ and {\bf Proposition \ref{Property-P}}-$(ii)$, we derive
	\begin{align*}
		\big\|\widetilde{\mathcal{G}}_{\tau}v\big\|_{H^1}&=\tau\left\|\mathcal{J}_{\phi_g}\mathcal{P}^{-1}_{\phi_g}\big(E''(\phi_g)-\mathcal{P}_{\phi_g}-\lambda_g\mathcal{I}\big)v\right\|_{H^1}\\
		&\le C\left(\big\|\mathcal{P}^{-1}_{\phi_g}\big(E''(\phi_g)-\mathcal{P}_{\phi_g}\big)v\big\|_{H^1}+\lambda_g\big\|\mathcal{P}^{-1}_{\phi_g}\mathcal{I}v\big\|_{H^1}\right)\le C\|v\|_{L^{p}}.
	\end{align*}
	Hence, replacing $v$ by $v^{n_j}-v^*$, $\widetilde{\mathcal{G}}_{\tau}v^{n_j}$ converges strongly to $\widetilde{\mathcal{G}}_{\tau}v^*$ in $H_0^1(\mathcal{D})$. This implies that $\widetilde{\mathcal{G}}_{\tau}$ is a compact operator from $N_{\phi_g}\mathcal{M}$ to $N_{\phi_g}\mathcal{M}$ {as it maps bounded sets into relatively compact ones.} The spectrum characterization of $\mathcal{G}_{\tau}(\phi_g)$ is obtained by the property of the compact operator $\widetilde{\mathcal{G}}_{\tau}$, {which is self-adjoint with respect to the $\mathcal{P}_{\phi_g}$-inner product on $N_{\phi_g}\mathcal{M}$. 
	Indeed, for all $u, v \in N_{\phi_g}\mathcal{M}$,
	\begin{align*}
		(\widetilde{\mathcal{G}}_{\tau}u,v)_{\mathcal{P}_{\phi_g}}&=\Big(\mathcal{J}_{\phi_g}\mathcal{P}^{-1}_{\phi_g} \left(E''(\phi_g) - \mathcal{P}_{\phi_g}-\lambda_g \mathcal{I}\right)u,v\Big)_{\mathcal{P}_{\phi_g}}\\
		&=\Big(\mathcal{J}_{\phi_g}\mathcal{P}^{-1}_{\phi_g} \left(E''(\phi_g) -\mathcal{P}_{\phi_g} - \lambda_g \mathcal{I}\right)v,u\Big)_{\mathcal{P}_{\phi_g}}=(\widetilde{\mathcal{G}}_{\tau}v,u)_{\mathcal{P}_{\phi_g}},
	\end{align*}
	which implies that $\widetilde{\mathcal{G}}_{\tau}$ has a real spectrum. Consequently,
	\begin{align*}
		\sigma\big(\widetilde{\mathcal{G}}_{\tau}\big) \subset \big\{0,\, \tau - \tau\mu_1,\, \tau - \tau\mu_2,\, \dots \big\}
		\quad \Longrightarrow \quad
		\sigma\big(\mathcal{G}_{\tau}(\phi_g)\big) \subset \big\{1 - \tau,\, 1 - \tau\mu_1,\, 1 - \tau\mu_2,\, \dots \big\},
	\end{align*}
	where all $\mu_i \in \mathbb{R}$.} For any eigenvalue $\mu_i$, we have
	\begin{align*}
		\mu_iv_i=\mathcal{J}_{\phi_g}\mathcal{P}^{-1}_{\phi_g} \left(E''(\phi_g) - \lambda_g \mathcal{I}\right)v_i\quad\Longrightarrow\quad\mu_i=\frac{\big\langle\big(E''(\phi_g)-\lambda_g\mathcal{I}\big)v_i,v_i\big\rangle}{\big\langle\mathcal{P}_{\phi_g}v_i,v_i\big\rangle}.
	\end{align*}
	This implies that, by {\bf Proposition \ref{Property-P}}-$(i)$, $\big\{\mu_1,\mu_2,\cdots\big\}\subset[\mu,L]$. The following content is to prove that $\mu\le1\le L$. Since $\widetilde{\mathcal{G}}_{\tau}$ is a compact operator, there exists a sequence $\{u^n\}_{n\in\mathbb{N}}\subset N_{\phi_g}\mathcal{M}$ such that $\big\|u^n\big\|_{H^1}=1$ and $\lim\limits_{n\to\infty}\widetilde{\mathcal{G}}_{\tau}u^n=0$ in $N_{\phi_g}\mathcal{M}$. Let $\widetilde{u}^n:=\widetilde{\mathcal{G}}_{\tau}u^n$, using {\bf(A6)}-$(iii)$ and -$(iv)$, we derive
	\begin{align*}
		\lim\limits_{n\to\infty}\Bigg|\frac{\big\langle\mathcal{P}_{\phi_g}\widetilde{u}^n,u^n\big\rangle}{\big\langle\mathcal{P}_{\phi_g}u^n,u^n\big\rangle}\Bigg|&\le C\lim\limits_{n\to\infty} 	\frac{\big\|\widetilde{u}^n\big\|_{H^1}\big\|u^n\big\|_{H^1}}{\big\|u^n\big\|^2_{H^1}}=0,
	\end{align*}
	and
	\begin{align*}
		\lim\limits_{n\to\infty}\frac{\big\langle\big(E''(\phi_g)-\lambda_g\mathcal{I}\big)u^n,u^n\big\rangle}{\big\langle\mathcal{P}_{\phi_g}u^n,u^n\big\rangle}&=\lim\limits_{n\to\infty}\frac{\big\langle\mathcal{P}_{\phi_g}\big(\widetilde{\mathcal{G}}_{\tau}/\tau+I\big)u^n,u^n\big\rangle}{\big\langle\mathcal{P}_{\phi_g}u^n,u^n\big\rangle}\\
		&=1+\frac{1}{\tau}\lim\limits_{n\to\infty}\frac{\big\langle\mathcal{P}_{\phi_g}\widetilde{u}^n,u^n\big\rangle}{\big\langle\mathcal{P}_{\phi_g}u^n,u^n\big\rangle}=1.
	\end{align*}
	This shows that $\big\{1,\mu_1,\mu_2,\cdots\big\}\subset[\mu,L]$. {Here, the values $\mu_i$ are eigenvalues of finite multiplicity corresponding to the compact operator $\widetilde{\mathcal{G}}_{\tau}$, while the point $1$ arises as an accumulation point of the spectrum.} Thus, $\rho\left(\mathcal{G}(\phi_g)\right) \le\max\big\{|1-\tau\mu|,|1-\tau L|\big\}$.
\end{proof}
Finally, an important lemma is proposed in the following.
\begin{lemma}\label{Nonlinear-Iteration}
	Suppose that the {bounded} linear operator $T$ on a Hilbert space $X$ satisfies the condition $\rho(T)=\rho<1$, and the sequence $\big\{v^n\big\}_{n\in\mathbb{N}}\subset X$ satisfies:
	\begin{align*}
		v^{n+1}=Tv^n+Y(v^n)\quad and \quad \lim\limits_{\|v\|_{X}\to0}\frac{\|Y(v)\|_{X}}{\|v\|_{X}}=0,
	\end{align*}
	{where $Y:X\to X$ is a mapping.}
	Then, for all sufficiently small $\varepsilon$, there exists $\sigma$ such that for all $\|v^0\|_X\le\sigma$, 
	\begin{align*}
		\|v^n\|_{X}\le C_{\varepsilon}\|v^0\|_X(\rho+\varepsilon)^n.
	\end{align*}
\end{lemma}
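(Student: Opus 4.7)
The plan is to establish the result via the standard \emph{Gelfand spectral radius} technique: reduce to an equivalent norm in which $T$ becomes a strict contraction with contraction constant arbitrarily close to $\rho$, and then absorb the nonlinear perturbation $Y$ locally.

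First I would invoke Gelfand's formula $\rho(T) = \lim_{k\to\infty}\|T^k\|^{1/k}$ (where the norm is the operator norm on $X$). For any fixed sufficiently small $\varepsilon>0$, this yields an integer $N = N(\varepsilon)$ such that $\|T^N\| \le (\rho + \varepsilon/2)^N$. With this in hand, I would introduce the adapted norm
\begin{equation*}
\|v\|_* := \max_{0\le k \le N-1} \frac{\|T^k v\|_X}{(\rho+\varepsilon/2)^k},
\end{equation*}
which is equivalent to $\|\cdot\|_X$ via constants depending only on $\varepsilon$ and $T$: say $\|v\|_X \le \|v\|_* \le K_\varepsilon \|v\|_X$ with $K_\varepsilon = \max_{0\le k < N}\|T^k\|/(\rho+\varepsilon/2)^k$. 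A short routine check (splitting into $k<N-1$ and $k=N-1$ and using $\|T^N\|\le(\rho+\varepsilon/2)^N$) shows $\|Tv\|_* \le (\rho+\varepsilon/2)\|v\|_*$.

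Next I would use the hypothesis $\|Y(v)\|_X/\|v\|_X \to 0$ as $\|v\|_X\to 0$ to choose $\sigma>0$ so small that $\|Y(v)\|_X \le \varepsilon/(2K_\varepsilon)\|v\|_X$ whenever $\|v\|_X\le \sigma$. In the adapted norm this gives $\|Y(v)\|_* \le K_\varepsilon \|Y(v)\|_X \le (\varepsilon/2)\|v\|_X \le (\varepsilon/2)\|v\|_*$. Shrinking $\sigma$ further if necessary so that $K_\varepsilon\sigma$ is small, I would then argue by induction on $n$ that every iterate remains in the ball $\{\|v\|_X \le \sigma\}$: from $\|v^n\|_*\le \|v^0\|_*(\rho+\varepsilon)^n$ and $\rho+\varepsilon<1$ we get $\|v^n\|_X \le \|v^n\|_* \le K_\varepsilon\|v^0\|_X \le K_\varepsilon\sigma \le \sigma$ (after a preliminary rescaling of the initial $\sigma$). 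The inductive step itself is just the triangle inequality in $\|\cdot\|_*$:
\begin{equation*}
\|v^{n+1}\|_* \le \|T v^n\|_* + \|Y(v^n)\|_* \le (\rho+\varepsilon/2)\|v^n\|_* + (\varepsilon/2)\|v^n\|_* = (\rho+\varepsilon)\|v^n\|_*.
\end{equation*}

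Translating back to the original norm with the equivalence constant $K_\varepsilon$ yields $\|v^n\|_X \le C_\varepsilon \|v^0\|_X(\rho+\varepsilon)^n$ with $C_\varepsilon = K_\varepsilon$. The only delicate point, and the main thing to be careful about, is the bootstrapping: one must pick $\sigma$ \emph{after} fixing $\varepsilon$ (and hence $K_\varepsilon$) so that the nonlinear term $Y$ is genuinely small in the adapted norm throughout the orbit. Since $\rho + \varepsilon < 1$, the adapted-norm contraction then forces the iterates to stay inside the neighborhood where the smallness of $Y$ is valid, closing the induction. Apart from this bookkeeping, the argument is essentially a perturbation of a linear contraction and involves no estimates specific to the Gross--Pitaevskii setting.
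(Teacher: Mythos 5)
Your proposal is correct, but it proves the lemma by a genuinely different mechanism than the paper. The paper unrolls the recursion into the variation-of-constants form $v^{n}=T^{n}v^{0}+\sum_{k=0}^{n-1}T^{n-1-k}Y(v^{k})$, bounds $\|T^{n}\|\le C_{\varepsilon}(\rho+\varepsilon/3)^{n}$ directly from Gelfand's formula, and then closes the estimate with the classical discrete Gronwall inequality, running the same bootstrapping induction you do to keep the orbit inside the ball where $\|Y(v)\|_{X}\le \frac{\varepsilon}{3C_{\varepsilon}}\|v\|_{X}$. You instead renorm: from $\|T^{N}\|\le(\rho+\varepsilon/2)^{N}$ you build the adapted norm $\|v\|_{*}=\max_{0\le k\le N-1}\|T^{k}v\|_{X}/(\rho+\varepsilon/2)^{k}$, verify that $T$ is a strict $(\rho+\varepsilon/2)$-contraction in $\|\cdot\|_{*}$, absorb $Y$ as an $\varepsilon/2$ perturbation in that norm, and then the induction is a one-line triangle inequality $\|v^{n+1}\|_{*}\le(\rho+\varepsilon)\|v^{n}\|_{*}$, with the equivalence constant $K_{\varepsilon}$ supplying the final $C_{\varepsilon}$. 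Your route avoids Gronwall entirely and makes the contraction structure explicit, at the cost of introducing the equivalent Lyapunov-type norm; the paper's route stays in the original norm but needs the summation and Gronwall bookkeeping. Both arguments handle the essential delicate point the same way — fixing $\varepsilon$ (hence $C_{\varepsilon}$ or $K_{\varepsilon}$) first and only then shrinking $\sigma$ so the smallness of $Y$ persists along the orbit — and both deliver the stated rate $C_{\varepsilon}\|v^{0}\|_{X}(\rho+\varepsilon)^{n}$, so your proof is a valid and self-contained alternative.
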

\begin{proof}
	Based on the discrete Gronwall inequality, the result is standard. Since $\lim\limits_{n\to\infty}\big\|T^n\big\|^{\frac{1}{n}}=\rho<1$, then for any sufficiently small $\varepsilon > 0$, there exists a constant $C_{\varepsilon}$ depending on $\varepsilon$ such that for all $n\in\mathbb{N}$, $\big\|T^n\big\|\le C_{\varepsilon}(\rho+\varepsilon/3)^n$. The condition $\lim\limits_{\|v\|_{X}\to0}\big\|Y(v)\big\|_{X}/\|v\|_{X}=0$ indicates that for any sufficiently small $\varepsilon$, there exists a small enough $\sigma_1$ such that for all $\|v\|_X\le\sigma_1$, $\big\|Y(v)\big\|_{X}\le\frac{\varepsilon}{3C_{\varepsilon}}\big\|v\big\|_{X}$. Let $\sigma\le\frac{\sigma_1}{(1+C_{\varepsilon})}$, we use mathematical induction to prove $\|v^n\|_{X}\le\sigma_1$ for all $n\ge 0$. Obviously, $n=0$ is true, now let us assume $\|v^{k}\|_{X}\le\sigma_1$ for all $k\le n-1\ (n\ge2)$. Hence, the following inequality holds for $k=n$
	\begin{align*}
		\big\|v^{n}\big\|_{X}&=\big\|Tv^{n-1}+Y(v^{n-1})\big\|_{X}\\
		&=\big\|T^2v^{n-2}+TY(v^{n-2})+Y(v^{n-1})\big\|_{X}
		%		&\cdots\cdots\\
		=\Bigg\|T^{n}v^{0}+\sum\limits_{k=0}^{n-1}T^{n-1-k}Y(v^{k})\Bigg\|_{X}\\
		&\le\big\|T^{n}v^{0}\big\|_{X}+\sum\limits_{k=0}^{n-1}\big\|T^{n-1-k}\big\|\big\|Y(v^{k})\big\|_{X}\\
		&\le C_{\varepsilon}\big\|v^{0}\big\|_{X}(\rho+\varepsilon/3)^{n}+\sum\limits_{k=0}^{n-1}(\rho+\varepsilon/3)^{n-1-k}\frac{\varepsilon}{3}\big\|v^{k}\big\|_{X}\\
		\Longrightarrow&\qquad (\rho+\varepsilon/3)^{-n}\big\|v^{n}\big\|_{X}\le C_{\varepsilon}\big\|v^{0}\big\|_{X}+\sum\limits_{k=0}^{n-1}\frac{\varepsilon}{3\rho+\varepsilon}(\rho+\varepsilon/3)^{-k}\big\|v^{k}\big\|_{X}.
	\end{align*}
	Applying the classical discrete Gronwall inequality, we derive
	\begin{align*}
		(\rho+\varepsilon/3)^{-n}\big\|v^{n}\big\|_{X}\le C_{\varepsilon}\|v^{0}\|_{X}\Bigg(1+\frac{\varepsilon}{3\rho+\varepsilon}\Bigg)^{n}\Longrightarrow\big\|v^n\big\|_{X}\le C_{\varepsilon}\|v^{0}\|_{X}(\rho+\varepsilon)^{n}\le\sigma_1.
	\end{align*}
	This not only completes the induction but also proves the conclusion.
\end{proof}
The following remark clarifies the motivation and context behind our technical lemmas.

\begin{remark}
	If only orthogonality were required, \textbf{Lemma~\ref{Transfer}} would admit a simpler proof by considering $\argmin_{u \in \mathcal{S}} \|\phi - u\|_{L^2}^2$. However, the $L^2$-norm does not control the $H^1$-norm, creating an obstruction to establishing the Polyak-\L ojasiewicz inequality. This motivates the construction of the functional \eqref{f-phi-u}.  
	For \textbf{Lemma~\ref{F-diff}}, we emphasize that the Fr\'echet differentiability of $g(\cdot)$ at $\phi_g$ does not require $\mathcal{P}_{\phi}$ to be differentiable. {\textbf{Lemma~\ref{G-spectrum}} provides a generalized eigenvalue characterization that extends the weighted eigenvalue problems in \cite{2023The,2024Convergence} under the identification $\mu_i \leftrightarrow 1 - \mu_i$.}
	\textbf{Lemma~\ref{Nonlinear-Iteration}} is a standard tool in the local stability analysis of dynamical systems, analogous to Ostrowski's theorem for studying the stability of fixed points in nonlinear iterations (see, e.g., \cite{2023The}), and both yield the same convergence rates. Moreover, if the second-order sufficient condition holds at the minimizer (e.g., when $\Omega=0$), then the operator $\mathcal{G}_{\tau}(\phi_g)$ can be analyzed over the entire tangent space, and the best convergence rate estimate established in~\textbf{Theorem~\ref{Var-P-convergence}} for the preconditioner \eqref{Varep-P} also applies to all preconditioners satisfying~\textbf{(A6)}.
\end{remark}

With this, we are ready to prove the theorems.
\subsection{Proof of main results}

\begin{proof}[Proof of {\bf Theorem \ref{Global-Convergence}}]
	$(i)$ {\bf Sufficient descent property :}
	
	Let $e_n:=\big(\phi^{n+1}-\widetilde{\phi}^{n+1}\big)\big/\tau_n^2$, by {\bf Proposition \ref{Property-P}}-$(iv)$, we get 
	\begin{align}\label{e_n}
		\|e_n\|_{\mathcal{P}_{\phi^n}}\le\frac{1}{2}\|d_n\|^2_{L^2}\big\|\phi^n+\tau_nd_n\big\|_{\mathcal{P}_{\phi^n}}\le C_{\phi^n,d_n}\|d_n\|^2_{\mathcal{P}_{\phi^n}}.
	\end{align}
	Applying {\bf Proposition \ref{Property-of-E''}}-$(iv)$, the following inequality holds
	\begin{align*}
		&E(\phi^{n+1})-E(\phi^n)=E(\phi^n+\tau_nd_n+\tau_n^2e_n)-E(\phi^n)\\
		\le&\;\tau_n\left\langle E'(\phi^n), d_n+\tau_ne_n\right\rangle+\tau_n^2\left\langle E''(\phi^n)(d_n+\tau_ne_n),d_n+\tau_ne_n\right\rangle+\tau_n^3C_{\phi^n,d_n}\|d_n\|_{H^1}^3\\
		=&\tau_n\left(\nabla_{\mathcal{P}}E(\phi^n),d_n\right)_{\mathcal{P}_{\phi^n}}+\tau^2_n\left\langle E'(\phi^n),e_n\right\rangle+\tau_n^2\left\langle E''(\phi^n)(d_n+\tau_ne_n),d_n+\tau_ne_n\right\rangle\\
		&\qquad\qquad\qquad\qquad\qquad\qquad\qquad\qquad\qquad\qquad\qquad\qquad\qquad\quad+\tau_n^3C_{\phi^n,d_n}\|d_n\|^3_{H^1}\\
		=&\;-\tau_n\|d_n\|^2_{\mathcal{P}_{\phi^n}}+\tau^2_n\left\langle E'(\phi^n),e_n\right\rangle+\tau_n^2\left\langle E''(\phi^n)(d_n+\tau_ne_n),d_n+\tau_ne_n\right\rangle\\
		&\qquad\qquad\qquad\qquad\qquad\qquad\qquad\qquad\qquad\qquad\qquad\qquad\qquad\quad+\tau_n^3C_{\phi^n,d_n}\|d_n\|^3_{H^1}.
	\end{align*}
	Combined with {\bf Proposition \ref{Property-of-E''}}-$(ii)$, {\bf (A6)}-$(ii)$, $\|d_n\|_{\mathcal{P}_{\phi^n}}\le\big\|\mathcal{P}_{\phi}^{-1}\mathcal{H}_{\phi}\phi\big\|_{\mathcal{P}_{\phi^n}}$, and {\bf Proposition \ref{Property-P}}-$(ii)$, we further get
	\begin{align*}
		E(\phi^{n+1})-E(\phi^n)&\le-\tau_n\|d_n\|^2_{\mathcal{P}_{\phi^n}}+\tau_n^2C_{\phi^n}\|d_n\|^2_{\mathcal{P}_{\phi^n}}+\tau_n^3C_{\phi^n}\|d_n\|^2_{\mathcal{P}_{\phi^n}}\\
		\nonumber&=-\tau_n\|d_n\|^2_{\mathcal{P}_{\phi^n}}+\tau_n^2 C_{\phi^n}\|d_n\|^2_{\mathcal{P}_{\phi^n}}\\
		&=-C_{\tau_n}\|d_n\|^2_{\mathcal{P}_{\phi^n}}
	\end{align*}
	with $C_{\tau_n}:=\tau_n-\tau_n^2C_{\phi^n}.$
	Then, when $\tau_n\in(0,1/C_{\phi^n})$, $C_{\tau_n}>0$. With this, the remaining proof is done by induction. For $n=0$, by $\big\|\phi^0\big\|_{H^1}\le C\sqrt{E(\phi^0)}:=C_{E^0}$, we conclude $C_{C_{E^0}}\ge C_{\phi^0}$ and 
	\begin{align*}
		C_{\tau_0}\ge\tau_0-\tau_0^2C_{C_{E^0}}>0\quad for\ all\ \ \tau_0\in\left(0,1/C_{C_{E^0}}\right).
	\end{align*}
	Hence, there exists a constant $\tau_{\max}=1/C_{C_{E^0}}$  such that for all $\tau_0\in (0,\tau_{\max})$, we have
	\begin{align*}
		E(\phi^{1})-E(\phi^0)\le -C_{\tau_0}\|d_0\|^2_{\mathcal{P}_{\phi^0}}.
	\end{align*}
	Now, assuming that $(i)$ holds for $n=k$, we aim to show that $(i)$ holds for $n=k+1$. According to the assumption, we obtain
	\begin{align*}
		E(\phi^{k+1})\le E(\phi^0)\quad\text{and}\quad \|\phi^{k+1}\|_{H^1}\le C\sqrt{E(\phi^{k+1})}\le C_{E^0}.
	\end{align*}
	Similarly, we derive $C_{C_{E^0}}\ge C_{\phi^{k+1}}$ and 
	\begin{align*}
		C_{\tau_{k+1}}\ge\tau_{k+1}-\tau_{k+1}^2C_{C_{E^0}}>0\quad for\ all\ \ \tau_{k+1}\in(0,\tau_{\max}).
	\end{align*} 
	
	\noindent$(ii)$ {\bf Global convergence:}
	
	Since $\{E(\phi^n)\}_{n\in\mathbb{N}}$ is monotonic decreasing and bounded below (with $E(\phi^n)\le E(\phi^0)$), the sequence $\{\phi^n\}_{n\in\mathbb{N}}$ is uniformly bounded in $H_0^1(\mathcal{D})$. {By the Rellich-Kondrachov compact embedding theorem, there exists a subsequence $\{\phi^{n_j}\}_{j\in\mathbb{N}}$ that converges weakly in $H_0^1(\mathcal{D})$ to some limit $\phi_g \in \mathcal{M}$, and strongly in $L^p(\mathcal{D})$ for every $1 \leq p < 6$. Noting the following identities
	\begin{align*}
	\phi^{n_j}
	&= \nabla^{\mathcal{R}}_{\mathcal{P}} E(\phi^{n_j})
	- \big(\mathcal{P}^{-1}_{\phi^{n_j}} \mathcal{H}_{\phi^{n_j}} \phi^{n_j}
	- \phi^{n_j}\big)
	+ \lambda_{\phi^{n_j}} \, \mathcal{P}^{-1}_{\phi^{n_j}} \mathcal{I} \phi^{n_j},\\
	\lambda_{\phi^{n_j}}
		&= -\big\langle \mathcal{P}_{\phi^{n_j}} \nabla^{\mathcal{R}}_{\mathcal{P}} E(\phi^{n_j}), \phi^{n_j} \big\rangle
		+ \big\langle \mathcal{H}_{\phi^{n_j}} \phi^{n_j}, \phi^{n_j} \big\rangle \\
		&= -\big\langle \mathcal{P}_{\phi^{n_j}} \nabla^{\mathcal{R}}_{\mathcal{P}} E(\phi^{n_j}), \phi^{n_j} \big\rangle
		+ 2E(\phi^{n_j})
		+ \int_{\mathcal{D}} \Big( f(\rho_{\phi^{n_j}}) |\phi^{n_j}|^2 - F(\rho_{\phi^{n_j}}) \Big) \, \mathrm{d}\bm{x}.
	\end{align*}
	By {\bf Theorem~\ref{Global-Convergence}}-$(i)$, together with \eqref{L-H1}, \eqref{L-I}, {\bf (A3)}, and the strong convergence $\phi^{n_j} \to \phi_g$ in $L^p(\mathcal{D})$ ($1 \leq p < 6$), we obtain the following strong convergences in $H^1(\mathcal{D})$:
	\begin{align*}
		\nabla^{\mathcal{R}}_{\mathcal{P}} E(\phi^{n_j}) \xrightarrow{j \to \infty} 0, \quad
		&\mathcal{P}^{-1}_{\phi^{n_j}} \mathcal{H}_{\phi^{n_j}} \phi^{n_j} - \phi^{n_j} \xrightarrow{j \to \infty} \mathcal{P}^{-1}_{\phi_g} \mathcal{H}_{\phi_g} \phi_g - \phi_g,\\
		 \mathcal{P}^{-1}_{\phi^{n_j}} \mathcal{I} \phi^{n_j} \xrightarrow{j \to \infty} \mathcal{P}^{-1}_{\phi_g} \mathcal{I} \phi_g, \quad
		&\lambda_{\phi^{n_j}} \xrightarrow{j \to \infty} 2E_g + \int_{\mathcal{D}} \Big( f(\rho_{\phi_g}) |\phi_g|^2 - F(\rho_{\phi_g}) \Big) \, \mathrm{d}\bm{x}.
	\end{align*}
	Thus, we conclude that 
	$\phi^{n_j} \xrightarrow{j \to \infty} \phi_g$ strongly in $H^1(\mathcal{D})$. Consequently, the associated multipliers converge as well
	$
	\lambda_{\phi^{n_j}} \xrightarrow{j \to \infty} \lambda_{\phi_g} = \lambda_g,
	$ and the limit satisfies the first-order necessary condition
	\[
	\mathcal{H}_{\phi_g} \phi_g = \lambda_g \mathcal{I} \phi_g.
	\]}
\end{proof}
\begin{proof}[Proof of {\bf Theorem \ref{Local-Convergence}}] Since $E$ is a Morse-Bott functional on $\mathcal{S}$, there exists $\sigma_2$ such that both the Polyak-\L ojasiewicz\ inequality and {\bf Lemma \ref{C_t}} hold. 
	 By the Polyak-\L ojasiewicz\ inequality, there exists $\sigma<\sigma_2$ such that for any $\phi^0\in\mathcal{B}_{\sigma}(\mathcal{S})$ and some $\widetilde{\phi}_g\in\mathcal{S}$, we have
	\begin{align*}
		\|\phi^0-\widetilde{\phi}_g\|_{H^1}<\sigma<\sigma_2\quad\text{and}\quad E(\phi^0)-E_{\mathcal{S}}\le C\|\phi^0-\widetilde{\phi}_g\|^2_{H^1}\le C\sigma^2<\sigma^2_2,
	\end{align*}
{where $E_{\mathcal{S}}$ denotes the energy level of the functional $E$ on the ground state manifold $\mathcal{S}$, as defined in \eqref{mathcal_S}.}
	Thus, for all sufficiently small $\varepsilon$ and $\tau\in(0,2/(L+\varepsilon))$, the Polyak-\L ojasiewicz\ inequality and {\bf Lemma \ref{C_t}} hold when $n=0$. 
	For $\tau\in(0,2/(L+\varepsilon))$, we know that
	\begin{align*}
		C_{\tau}=\tau-\frac{\tau^2}{2}(L+\varepsilon)\in(0,1/(2(L+\varepsilon))\,],\quad1-2C_{\tau}(\mu-\varepsilon)\in\big[\,1-(\mu-\varepsilon)/(L+\varepsilon),1\big).
	\end{align*} 
	Next, we use mathematical induction to prove that for all $ n \ge 0 $, $\|\phi^n-\widetilde{\phi}_g\|_{H^1}<\sigma_2$. For $ n = 0 $, it is given that $\|\phi^n-\widetilde{\phi}_g\|_{H^1}<\sigma_2$. Assume that for some $ k \ge 1 $, $\|\phi^n-\widetilde{\phi}_g\|_{H^1}<\sigma_2$ for all $0\le n\le k$. As well, for all sufficiently small $\varepsilon$ and $\tau\in(0,2/(L+\varepsilon))$, the Polyak-\L ojasiewicz\ inequality and {\bf Lemma \ref{C_t}} hold when $0\le n\le k$. Therefore, for all $0\le n\le k$, we get 
	\begin{align*}
		E(\phi^{n+1})-E(\phi^{n})&\le-C_{\tau}\left\|d_n\right\|^2_{\mathcal{P}_{\phi^n}}\le-2C_{\tau}(\mu-\varepsilon)\left(E(\phi^{n})-E_{\mathcal{S}}\right),\\
		\Longrightarrow\; E(\phi^{n+1})-E_{\mathcal{S}}&\le\left(1-2C_{\tau}(\mu-\varepsilon)\right)\left(E(\phi^{n})-E_{\mathcal{S}}\right)\\
		&\le\left(1-2C_{\tau}(\mu-\varepsilon)\right)^{n+1}\left(E(\phi^{0})-E_{\mathcal{S}}\right),\\
		\Longrightarrow\; \left\|d_n\right\|^2_{\mathcal{P}_{\phi^n}}&\le C_{\tau}(E(\phi^{n})-E(\phi^{n+1}))\le C_{\tau}(E(\phi^{n})-E_{\mathcal{S}})\\
		&\le C_{\tau}\left(1-2C_{\tau}(\mu-\varepsilon)\right)^{n}(E(\phi^{0})-E_{\mathcal{S}}).
	\end{align*}
	According to \eqref{C_t1} and {\bf (A6)}-$(ii)$, we further get
\begin{align*}
		\|\phi^{k+1}-\widetilde{\phi}_g\|_{H^1}&\le\|\phi^{0}-\widetilde{\phi}_g\|_{H^1}+\sum\limits_{j=0}^{k}\|\phi^{j+1}-\phi^j\|_{H^1}\le\|\phi^{0}-\widetilde{\phi}_g\|_{H^1}+ C\sum\limits_{j=0}^{k}\left\|d_j\right\|_{\mathcal{P}_{\phi^j}}\\
		&\le \sigma+C\sigma\sum\limits_{j=0}^{k}\sqrt{1-2C_{\tau}(\mu-\varepsilon)}^{j}\le C\sigma.
	\end{align*}
	Hence, we choose $\sigma$ to satisfy $C\sigma<\sigma_2$. This suggests that $\|\phi^n-\widetilde{\phi}_g\|_{H^1}<\sigma$ for all $0\le n\le k+1,\ k\ge1$. That completes the induction. 
	
	The convergence rates of energy $E(\phi^n)$ and $d_n$ are immediately obtained:
	\begin{align*}
		E(\phi^{n+1})-E_{\mathcal{S}}&\le\left(1-2C_{\tau}(\mu-\varepsilon)\right)^{n+1}\left(E(\phi^{0})-E_{\mathcal{S}}\right)\\
		\left\|d_n\right\|^2_{\mathcal{P}_{\phi^n}}&\le C_{\tau}\left(E(\phi^{n})-E_{\mathcal{S}}\right)\le C_{\varepsilon}\left(E(\phi^{0})-E_{\mathcal{S}}\right)\left(1-2C_{\tau}(\mu-\varepsilon)\right)^{n}.
	\end{align*}
	For $\big\{\phi^n\big\}_{n\in\mathbb{N}}$, by \eqref{C_t1}, we have
	\begin{align}\label{Phi-eq-Grad}
		\nonumber\|\phi^{m}-\phi^n\|_{H^1}&\le\sum\limits_{j=n}^{m-1}\|\phi^{j+1}-\phi^j\|_{H^1}\le C\sum\limits_{j=n}^{m-1}\left\|d_j\right\|_{H^1}\le C\sum\limits_{j=n}^{m-1}\sqrt{E(\phi^{j})-E_{\mathcal{S}}}\\
		\nonumber&\le  C_{\varepsilon}\sqrt{E(\phi^{0})-E_{\mathcal{S}}}\sum\limits_{j=n}^{m-1}\left(\sqrt{1-2C_{\tau}(\mu-\varepsilon)}\right)^{j}\\
		&\le C_{\varepsilon}\sqrt{E(\phi^{0})-E_{\mathcal{S}}}\left(\sqrt{1-2C_{\tau}(\mu-\varepsilon)}\right)^{n}.
	\end{align}
	This means that $\left\{\phi^n\right\}_{n\in\mathbb{N}}$ is a Cauchy sequence, and is convergent. Let $m\to\infty$, by the Polyak-\L ojasiewicz\ inequality, and the continuity of $\nabla^{\mathcal{R}}_{\mathcal{P}} E(\phi)$, there is linear convergence as follows for $\big\{\phi^n\big\}_{n\in\mathbb{N}}$
	\begin{align*}
		\|\phi^n-\phi_g\|_{H^1}&\le C_{\varepsilon}\sqrt{E(\phi^{0})-E(\phi_g)}\left(\sqrt{1-2C_{\tau}(\mu-\varepsilon)}\right)^{n}\\
		&\le C_{\varepsilon}\|\phi^0-\phi_g\|_{H^1}\left(\sqrt{1-2C_{\tau}(\mu-\varepsilon)}\right)^{n}.
	\end{align*}
	In particular, with the optimal stepsize choice $\tau=1/(L+\varepsilon)$, this choice yields a precisely characterized convergence rate
	\begin{align*}
		\|\phi^n-\phi_g\|_{H^1}&\le C_{\varepsilon}\|\phi^0-\phi_g\|_{H^1}\left(\sqrt{1-\frac{\mu-\varepsilon}{L+\varepsilon}}\right)^{n}.
	\end{align*}
\end{proof}
\begin{proof}[Proof of {\bf Theorem \ref{Var-P-convergence}}] According to {\bf Theorem \ref{Local-Convergence}}, we already know that this sequence $\left\{\phi^n\right\}_{n\in\mathbb{N}}$ is linearly convergent for all $\tau\in(0,2/(L+\varepsilon))$ and for any $\phi^0\in\mathcal{B}_{\sigma}(\mathcal{S})$. Now we derive the optimal local convergence rate. Using {\bf Proposition \ref{Property-P}}-$(iii)$, the Polyak-\L ojasiewicz\ inequality, and \eqref{Phi-eq-Grad}, we obtain
	\begin{align}\label{Phi-eq-Energy}
		\nonumber\left\|\nabla_{\mathcal{P}}^{\mathcal{R}}E^n\right\|_{H^1}&\le C\|\phi^n-\phi_g\|_{H^1}\le C\sum\limits_{k=n}^{\infty}\sqrt{E(\phi^{k})-E_{\mathcal{S}}}\\
		&\nonumber\le C\sum\limits_{k=n}^{\infty}\left(\sqrt{1-2C_{\tau}(\mu-\varepsilon)}\right)^{k-n}\sqrt{E(\phi^{n})-E_{\mathcal{S}}}\\
		&\le C\sqrt{E(\phi^{n})-E_{\mathcal{S}}}\le C	\left\|\nabla_{\mathcal{P}}^{\mathcal{R}}E^n\right\|_{H^1}.
	\end{align}	
	And then we have  $\sum\limits_{k=n}^{\infty}o\left(\phi^k-\phi_g\right)=o\left(\phi^n-\phi_g\right)$ by 
	\begin{align*}
		\left\|\sum_{k=n}^{\infty}o\big(\phi^k-\phi_g\big)\right\|_{H^1}\le\delta_{\sigma}\sum_{k=n}^{\infty}\|\phi^k-\phi_g\|_{H^1}\le C\delta_{\sigma}\|\phi^n-\phi_g\|_{H^1},
	\end{align*}
	where $\delta_{\sigma}\to0^+$ as $\sigma\to0^+$. 
	
	{We perform a local linearization of the P-RG algorithm around the converged state $\phi_g$, yielding
		\begin{align*}
			\phi^{n+1} 
			&= \phi^n - \tau\, \mathrm{Proj}^{\mathcal{P}_{\phi_g}}_{\phi_g} \mathcal{P}_{\phi_g}^{-1}\big(E''(\phi_g)-\lambda_{\phi_g}\mathcal{I}\big)(\phi^n - \phi_g) + o(\phi^n - \phi_g).
		\end{align*}
		Noting that for any $v \in \mathrm{span}\{i\phi_g,\, i\mathcal{L}_z\phi_g\}$, we have
		\begin{align*}
			\Big(\mathrm{Proj}^{\mathcal{P}_{\phi_g}}_{\phi_g} \mathcal{P}_{\phi_g}^{-1}&\big(E''(\phi_g)-\lambda_{\phi_g}\mathcal{I}\big)(\phi^n - \phi_g),\, v\Big)_{L^2}\\
			&= \Big(\mathcal{P}_{\phi_g}^{-1}\big(E''(\phi_g)-\lambda_{\phi_g}\mathcal{I}\big)(\phi^n - \phi_g),\, v\Big)_{L^2} \\
			&= \Big\langle \big(E''(\phi_g)-\lambda_{\phi_g}\mathcal{I}\big)(\phi^n - \phi_g),\, \mathcal{P}_{\phi_g}^{-1}\mathcal{I} v \Big\rangle \\
			&= \Big\langle \big(E''(\phi_g)-\lambda_{\phi_g}\mathcal{I}\big)(\phi^n - \phi_g),\, \big(E''(\phi_g)-(\lambda_g-\sigma_0)\mathcal{I}\big)^{-1}\mathcal{I} v \Big\rangle\\
			&= \Big\langle \big(E''(\phi_g)-\lambda_{\phi_g}\mathcal{I}\big)(\phi^n - \phi_g),\, \mathcal{I} v/\sigma_0 \Big\rangle\\
			&= 0.
		\end{align*}
		Consequently,
		$
		\mathrm{Proj}^{\mathcal{P}_{\phi_g}}_{\phi_g} \mathcal{P}_{\phi_g}^{-1}\big(E''(\phi_g)-\lambda_{\phi_g}\mathcal{I}\big)(\phi^n - \phi_g) \in N_{\phi_g}\mathcal{M}.
		$
		Applying $I - \mathcal{J}_{\phi_g}$ to both sides of the linearized iteration yields
		\begin{align*}
			(I - \mathcal{J}_{\phi_g})(\phi^{n+1} - \phi^n) = o(\phi^n - \phi_g).
		\end{align*}
		This implies that
		\begin{align*}
			\phi^{n+1} - \phi^n 
			= \big(\mathcal{J}_{\phi_g} + I - \mathcal{J}_{\phi_g}\big)(\phi^{n+1} - \phi^n) = \mathcal{J}_{\phi_g}(\phi^{n+1} - \phi^n) + o(\phi^n - \phi_g),
		\end{align*}
		and by summing from $k = n$ to $\infty$, we obtain
		\begin{align*}
			\phi^n - \phi_g 
			= \mathcal{J}_{\phi_g}(\phi^n - \phi_g) + \sum_{k=n}^{\infty} o(\phi^k - \phi_g) 
			= \mathcal{J}_{\phi_g}(\phi^n - \phi_g) + o(\phi^n - \phi_g).
		\end{align*}
		Thus, the dynamics are effectively governed by the projected component $\mathcal{J}_{\phi_g}(\phi^n - \phi_g)$.}
		
	We can now identify the optimal local convergence rate of $\mathcal{J}_{\phi_g}(\phi^{n}-\phi_g)$. Specifically,
	\begin{align*}
		\mathcal{J}_{\phi_g}(\phi^{n+1}-\phi^n)&=\phi^{n+1}-\phi^n+o\left(\phi^{n}-\phi_g\right)=-\tau\nabla_{\mathcal{P}}^{\mathcal{R}}E^n+o\left(\phi^{n}-\phi_g\right)\\
		&=-\tau g'(\phi_g)(\phi^{n}-\phi_g)+o\left(\phi^{n}-\phi_g\right)\\
		\Longrightarrow\;\mathcal{J}_{\phi_g}(\phi^{n+1}-\phi_g)&=\mathcal{J}_{\phi_g}(\phi^{n}-\phi_g)-\tau g'(\phi_g)\mathcal{J}_{\phi_g}(\phi^{n}-\phi_g)+o\left(\phi^{n}-\phi_g\right)\\
		&=\mathcal{G}_{\tau}(\phi_g)\mathcal{J}_{\phi_g}(\phi^{n}-\phi_g)+o\left(\mathcal{J}_{\phi_g}(\phi^{n}-\phi_g)\right).
	\end{align*}
	Using {\bf Lemma \ref{G-spectrum}} and {\bf Lemma \ref{Nonlinear-Iteration}}, the faster local convergence rate of $\mathcal{J}_{\phi_g}(\phi^{n}-\phi_g)$ is obtained, for all $\phi^0\in\mathcal{B}_{\sigma}(\mathcal{S})$ and $\tau\in(0,2/(L+\varepsilon))$,
	\begin{align*}
		\left\|\mathcal{J}_{\phi_g}(\phi^{n}-\phi_g)\right\|_{H^1}&\le C_{\varepsilon}\|\phi^0-\phi_g\|_{H^1}\left(\max\big\{|1-\tau\mu|,|1-\tau L|\big\}+\varepsilon\right)^{n}.
	\end{align*}
	Based on $\phi^n-\phi_g=\mathcal{J}_{\phi_g}(\phi^n-\phi_g)+o(\phi^n-\phi_g)$, we have proved that
	\begin{align*}
		\|\phi^n-\phi_g\|_{H^1}\le C_{\varepsilon}\|\phi^0-\phi_g\|_{H^1}\left(\max\big\{|1-\tau\mu|,|1-\tau L|\big\}+\varepsilon\right)^{n}.
	\end{align*}
	In additon, when $\tau=2/(L+\mu)$, the optimal local convergence rate is obtained
	\begin{align*}
		\|\phi^n-\phi_g\|_{H^1}\le C_{\varepsilon}\|\phi^0-\phi_g\|_{H^1}\Bigg(\frac{L-\mu}{L+\mu}+\varepsilon\Bigg)^{n}.
	\end{align*}
\end{proof}
\begin{proof}[Proof of {\bf Corollary} \ref{E-eq-Phi}]
	According to \eqref{Phi-eq-Energy} and {\bf Lemma \ref{C_t}}, we get {
	\begin{align*}
		\|\phi^n-\phi_g\|_{H^1}\lesssim \sqrt{E^{n}-E(\phi_g)}\lesssim \|\nabla_{\mathcal{P}}^{\mathcal{R}}E^n\|_{H^1}\lesssim   \sqrt{E^{n}-E^{n+1}}.
	\end{align*}}
	Moreover, combining \eqref{Phi-eq-Energy} and the Polyak-\L ojasiewicz\ inequality, we further get {
	\begin{align*}
		\sqrt{E^{n}-E^{n+1}}\le\sqrt{E^{n}-E(\phi_g)}\lesssim \|\nabla_{\mathcal{P}}^{\mathcal{R}}E^n\|_{H^1}\lesssim \|\phi^n-\phi_g\|_{H^1}.
	\end{align*}}
	We complete the proof.
\end{proof}

\section{Numerical experiment}
\label{Sec5} 
In this section, we  verify numerically the  assumption of Morse-Bott property (i.e. \textbf{Definitiaon \ref{Morse-Bott-P}}) on the Gross-Pitaevskii energy functional    
and the  local convergence rate (i.e. \textbf{Theorems \ref{Local-Convergence} and \ref{Var-P-convergence}}) of the P-RG with different preconditioners around 
the ground state $\phi_g$.  To this end, we consider the minimization problem \eqref{Riem-Opt-Problem} on a disk 
$\mathcal{D} =: \big\{(x,y)=(r\cos(\Theta), r\sin(\Theta)) \mid r\in[0, 12], \Theta\in[0, 2\pi] \big\}$.
The trapping potential, nonlinear interaction and  angular velocity are respectively set as 
$V(\bm{x}) = |\bm{x}|^2 / 2$,   $f(s) = 500 s$ and  $\Omega = 0.9$. 

\vspace{0.3cm}

To numerically solve  problem \eqref{Riem-Opt-Problem},  we utilize respectively the standard   eighth-order and second-order central finite  difference method to discretize 
all related derivatives in the P-RG w.r.t. $\Theta$ and $r$
on   an equally-spacing grids  $\widetilde{\mathcal{D}}=:\big\{(r_{i+1/2}, \Theta_j) \mid i=0,\cdots, N_r-1, j=0,\cdots, N_\Theta-1 \big\}$. Here, 
$r_{i+1/2}=(i+1/2) h_r$, $\Theta_j=j h_\Theta$ with  $h_r=12/2^8$ and $ h_\Theta=2\pi/2^{10}$ the mesh sizes in $r$- and $\Theta$-direction. 
The P-RG is stopped when meet the  criterion $r^n := \left\|\mathcal{H}_{\phi^n}\phi^n-\widetilde{\lambda}_{\phi^n}\phi^n\right\|_{\infty} \le 10^{-10}$, and the resulted iterate $\phi^n$ is regarded as
the ground state $\phi_g$.

\begin{exmp}
	\label{eg:MB_prop}
	Here, we check if the Gross-Pitaevskii energy functional $E(\phi)$ is a Morse-Bott functional 
	at the ground state $\phi_g$. We first compute $\phi_g$ via the P-RG in two stages using different preconditioners. In the first stage, {we employ the globally convergent preconditioner $\mathcal{P}_\phi = \mathcal{H}_{\phi}$ for $10^{4}$ iterations to obtain an approximate solution sufficiently close to the ground state. 
	This initial phase is essential, as the quasi-optimal local preconditioner introduced below is not designed for global convergence and may fail if initialized far from the solution.} In the second stage, we switch to a quasi-optimal local preconditioner given by $\mathcal{P}_\phi = E''(\phi) - (\widetilde{\lambda}_{\phi} - \sigma_0)\mathcal{I}$ with $\sigma_0 = 10^{-1}$. After an additional $7,224$ iterations, the termination conditions are satisfied. Then, we compute the chemical potential of $\phi_g$, i.e., $\lambda_g=\left\langle\mathcal{H}_{\phi_g}\phi_g,\phi_g\right\rangle$, and 
	the first five smallest eigenvalues $\lambda_\ell\, (\ell=1, \cdots, 5)$ of $E''(\phi_g)|_{T_{\phi_g}\mathcal{M}}$. 
\end{exmp}

Fig. \ref{fig1} shows the contour plots of the  density  $|\phi_g|^2$. 
Table \ref{tab1} lists the value of  $\lambda_g$ and $\lambda_\ell$ ($\ell=1, \cdots, 5$).
From the table and additional results not shown here for brevity, we can obtain that: the smallest eigenvalue of  $E''(\phi_g)|_{T_{\phi_g}\mathcal{M}}$ equals to $\lambda_g$ and its multiplicity
is two (i.e. $\lambda_1=\lambda_2<\lambda_3$). This implies  $E''(\phi_g)|_{T_{\phi_g}\mathcal{M}}$ has only two {eigenfunctions} $i \phi_g$ and $i \mathcal{L}_z\phi_g$ according to Proposition \ref{Prop1}, 
hence $\ker\left(E''(\phi_g) - \lambda_g \mathcal{I}\right)|_{T_{\phi_g}\mathcal{M}} =T_{\phi_g}\mathcal{S}$. Therefore, the Gross-Pitaevskii energy functional $E(\phi)$ 
is a Morse-Bott functional which confirms that the assumption in theorem \ref{Local-Convergence}-\ref{Var-P-convergence} is reasonable.

\begin{figure}[htpb]	
	\centering{	
		%	\vspace{0.25cm}
		\includegraphics[height=5.4cm,width=7cm]{./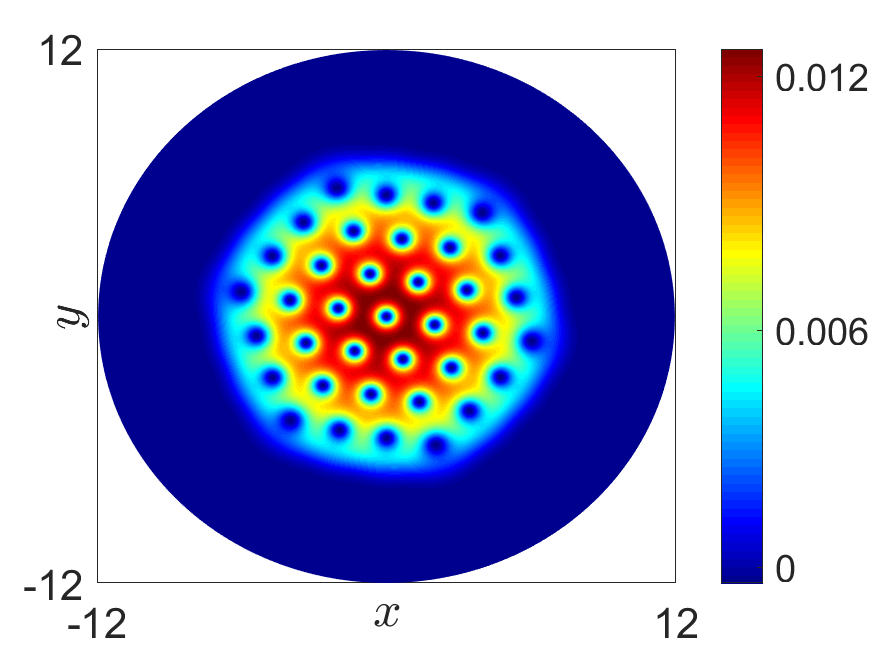}
	}
	\caption{Contour plots of the  density of the ground state  $|\phi_g(\bm{x})|^2$.}
	\label{fig1}
\end{figure}

\begin{table}[htpb]
	\tabcolsep 11pt  
	\caption{The value of  $\lambda_g$ and the first five smallest  eigenvalues of $E''(\phi_g)|_{T_{\phi_g}\mathcal{M}}$ in example \ref{eg:MB_prop}.}\label{tab1}
	\begin{center}
		\vspace{-1.5em}	\def\temptablewidth{1\textwidth}
		{\rule{\temptablewidth}{1pt}}
		\setlength{\tabcolsep}{1.4em}
		\begin{tabularx}{\temptablewidth}{c|c|c|c|c|c}
			%\hline
			$\lambda_g$ & $\lambda_1$ &$\lambda_2$ &$\lambda_3$ &$\lambda_4$ &$\lambda_5$  \\
			\hline
			$6.68323527$ & $6.68323527$ &  $6.68323527$  & $6.68344588$ & $6.68344588$ & $6.68559326$  \\
			%\hline            
		\end{tabularx}
		
		{\rule{\temptablewidth}{1pt}}
	\end{center}
	
\end{table}
%\vspace{-1.5em}

\begin{exmp}
	\label{eg:conv_rate_test}
	Here, we test the theoretical convergence rates of P-RG with different preconditioners around 
	the ground state $\phi_g$ shown in \textbf{ Theorems \ref{Local-Convergence} and  \ref{Var-P-convergence}}. 
	To this end, we take the same $\phi_g$ as  studied in last example. We  compare the performance of  
	P-RG  with following four  preconditioners:
	\item $(i)$ $\mathcal{P}_\phi=\mathcal{P}_1 := -\frac{1}{2}\Delta + V(\bm{x})$, $(ii)$ $\mathcal{P}_\phi=\mathcal{P}_2 := \mathcal{H}_0$, $(iii)$ $\mathcal{P}_\phi=\mathcal{P}_3 := \mathcal{H}_{\phi}$,
	\item  $(iv)$ $\mathcal{P}_\phi=\mathcal{P}_4 := E''(\phi) - (\widetilde{\lambda}_{\phi} - \sigma_0)\mathcal{I}$ with $\sigma_0 = 10^{-3}$.
	
	\vspace{0.2cm}
	
	Noticed that the P-RG with preconditioners $\mathcal{P}_1$ and $\mathcal{P}_2$  lead  to the projected Sobolev gradient methods proposed by Danaila et. al. in \cite{2010A, 2017Computation},
	P-RG with   $\mathcal{P}_3$ lead  to the one  proposed by Henning et. at.  in \cite{2020Sobolev},  while the P-RG with $\mathcal{P}_4$ is our proposed scheme. 
	Firstly, we computethe lower bound of the generalized eigenvalues of $\big(E''(\phi_g) - \lambda_g \mathcal{I},\, \mathcal{P}_{\phi_g}\big)$ on $N_{\phi_g}\mathcal{M}$ and the upper bound on $T_{\phi_g}\mathcal{M}$, i.e. $\mu$ and $L$ in \eqref{Mu-L}.
	Then, we  compute the optimal descent step size $\tau$ and theoretical convergence rate $\rho$ for the P-RG, i.e., 
	$\tau=1/L$ and $\rho=\sqrt{1-\mu/L}$  for P-RG with $\mathcal{P}_1$-$\mathcal{P}_3$, while
	$\tau=2/(L+\mu)$ and $\rho=(L-\mu)/(L+\mu)$ for  P-RG with $\mathcal{P}_4$.
	Secondly, we test the actual local convergence rate of these P-RG. We start the P-RG  with an initial guess $\phi^{0}$ close to  $\phi_g$, specifically selected from an intermediate iteration in the second stage of solving for $\phi_g$ as described in \textbf{Example~\ref{eg:MB_prop}}, 
	for which $\|\phi^{0} - \phi_g\|_{H^1}\approx2\times10^{-2}$. {Since this initial guess is closer to $\phi_g$ than the starting point used in the second stage of Example~5.1, we experimented with a more aggressive choice of $\sigma_0 = 10^{-3}$, thereby verifying that a smaller $\sigma_0$ indeed yields a faster local convergence rate, provided the coercivity condition holds.} The iteration is terminated when $E(\phi^n) - E(\phi_g) \le 10^{-14}$.
	According to \textbf{ Corollary} \ref{E-eq-Phi}, we used $\sqrt{E(\phi^n)-E(\phi_g)}$ to examine the actual convergence rate of the P-RG.

\end{exmp} 
Table \ref{tab2} lists the values of $\mu$, $L$,  $\tau$ and the theoretical convergence rate $\rho$ as predicted in \textbf{Theorems 
\ref{Local-Convergence}-\ref{Var-P-convergence}} of the P-RG with different preconditioners.  Fig. \ref{fig2} shows the evolution of the errors 
$\sqrt{E(\phi^n)-E(\phi_g)}\sim \mathcal{O}(\rho^n)$ actually computed by these P-RG. {The iterations are initialized sufficiently close to the ground state so that the observed convergence behavior reflects the local asymptotic regime.} From the table and additional results not shown here for brevity, we can obtain that: 
$(i)$ The actual convergence rates of those P-RG agree  well with those theoretical predictions (c.f.  Fig. \ref{fig2} red-colored solid lines and black-colored dashed lines), 
which numerically confirm that the estimates of the local convergence rate for P-RG with different preconditioners  in \textbf{Theorems  \ref{Local-Convergence}-\ref{Var-P-convergence}} 
are correct and sharp (c.f.   Fig. \ref{fig2} red-colored solid lines and blue-colored dashdot lines). 
$(ii)$ The P-RG with preconditioner $\mathcal{P}_4$ significantly outperforms  P-RG with other preconditioners in term of computational efficiency.  For example, in our tested case, 
P-RG with  preconditioner $\mathcal{P}_4$ 
converges within $10^2$ steps  (c.f.  Fig. \ref{fig2} $(iv)$) shown here, while 
P-RG with  preconditioner  $\mathcal{P}_1$,  $\mathcal{P}_2$ and  $\mathcal{P}_3$ requires more than $10^5$  steps to converge (c.f.  Fig. \ref{fig2} $(i)$-$(iii)$). 
Indeed, as indicated in \textbf{Theorem \ref{Var-P-convergence}} and shown in  Fig. \ref{fig2} $(iv)$, the P-RG with  preconditioner $\mathcal{P}_4$ is the best P-RG scheme in term  of local convergence.

%\medskip

\begin{table}[ht] 
	\tabcolsep 11pt  
	\caption{{The values of $\mu$, $L$, the descent step size $\tau$, and the theoretical convergence rate $\rho$ for the P-RG method with different preconditioners in Example~\ref{eg:conv_rate_test}. 
		For $\mathcal{P}_1$--$\mathcal{P}_3$, the parameters are given by $\tau = 1/L$ and $\rho = \sqrt{1 - \mu/L}$, as predicted by \textbf{Theorem~\ref{Local-Convergence}}. 
		For $\mathcal{P}_4$, we use the sharper estimates $\tau = 2/(L + \mu)$ and $\rho = (L - \mu)/(L + \mu)$ from \textbf{Theorem~\ref{Var-P-convergence}}.}}\label{tab2}
	\begin{center}
		\vspace{-1.5em}	\def\temptablewidth{1\textwidth}
		{\rule{\temptablewidth}{1pt}}
		\setlength{\tabcolsep}{1.8em}
		\begin{tabularx}{\temptablewidth}{c|c|c|c|c}
			&$\mathcal{P}_1$ & $\mathcal{P}_2$ &  $\mathcal{P}_3$ & $\mathcal{P}_4$\\
			\hline
			$\mu$\quad&\quad$8.249\times10^{-6}$\quad &\quad$5.811\times10^{-5}$\quad & \quad$3.168\times10^{-5}$ \quad  &\quad $0.17397014$\quad\\
			\hline
			$L$\quad &\quad$6.33028729$\quad &\quad $8.53455937$\quad &\quad $1.65411833$\quad&	\quad$1$\quad \\
			\hline
			$\tau$\quad &\quad$0.15797071$\quad&\quad$0.11717066$\quad&\quad$0.60455167$\quad&\quad$1.70362084$\quad\\
			\hline
			$\rho$\quad&\quad$0.99999934$\quad &\quad$0.99999659$\quad &\quad$0.99999042$\quad &\quad$0.70362084$\quad\\
		\end{tabularx}
		{\rule{\temptablewidth}{1pt}}
	\end{center}
\end{table}

\begin{figure}[ht]	
	\centering{	
		%	\vspace{0.25cm}
		$(i)$\;\includegraphics[height=4cm,width=5cm]{./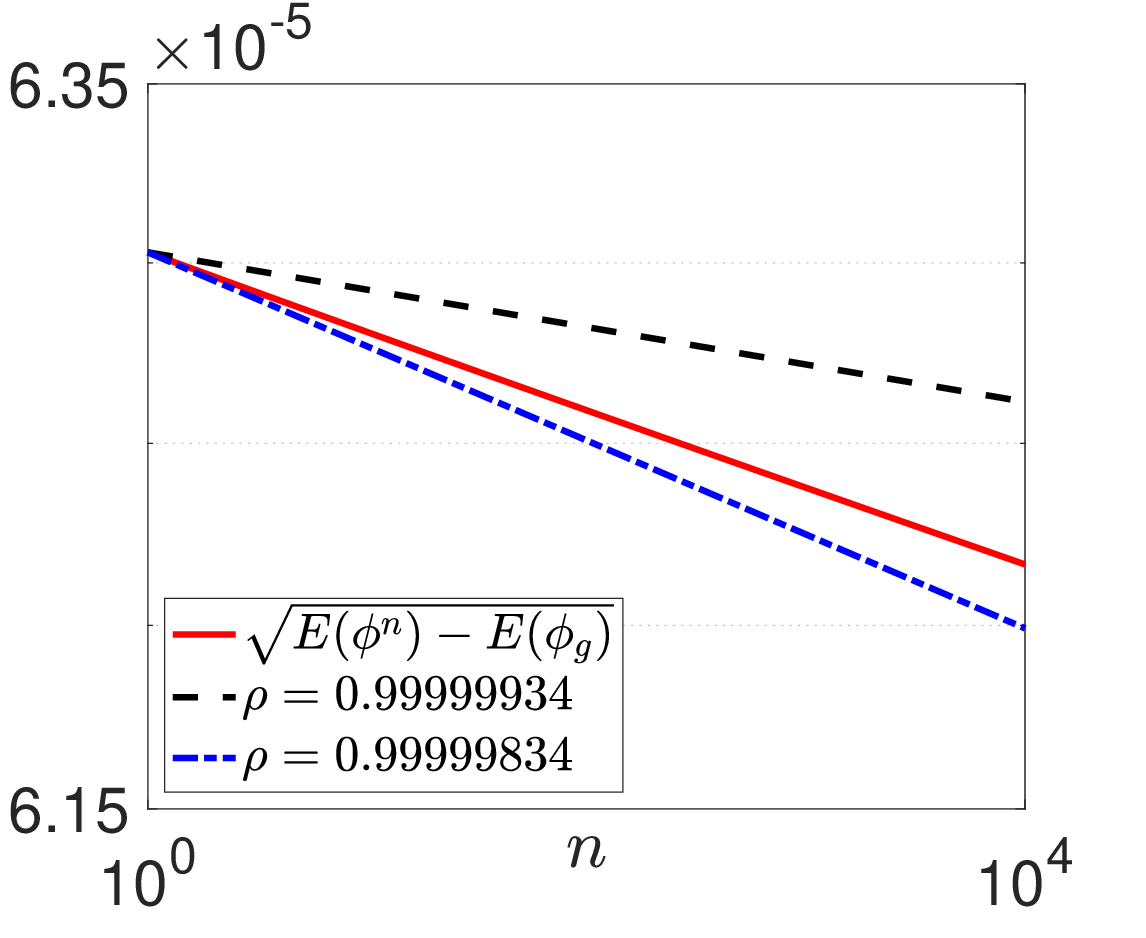} \qquad
		$(ii)$\;\includegraphics[height=4cm,width=5cm]{./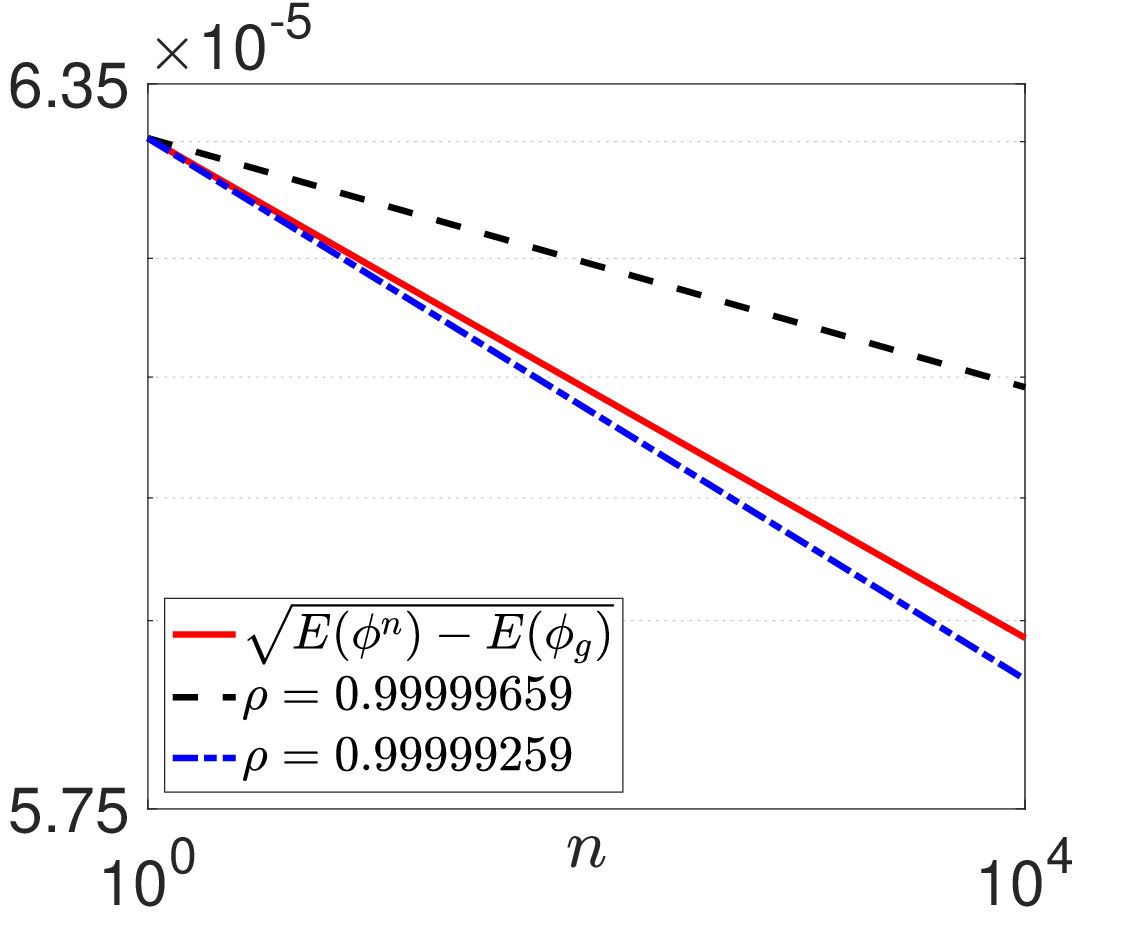}\\[1em]
		$(iii)$\;\includegraphics[height=4cm,width=5cm]{./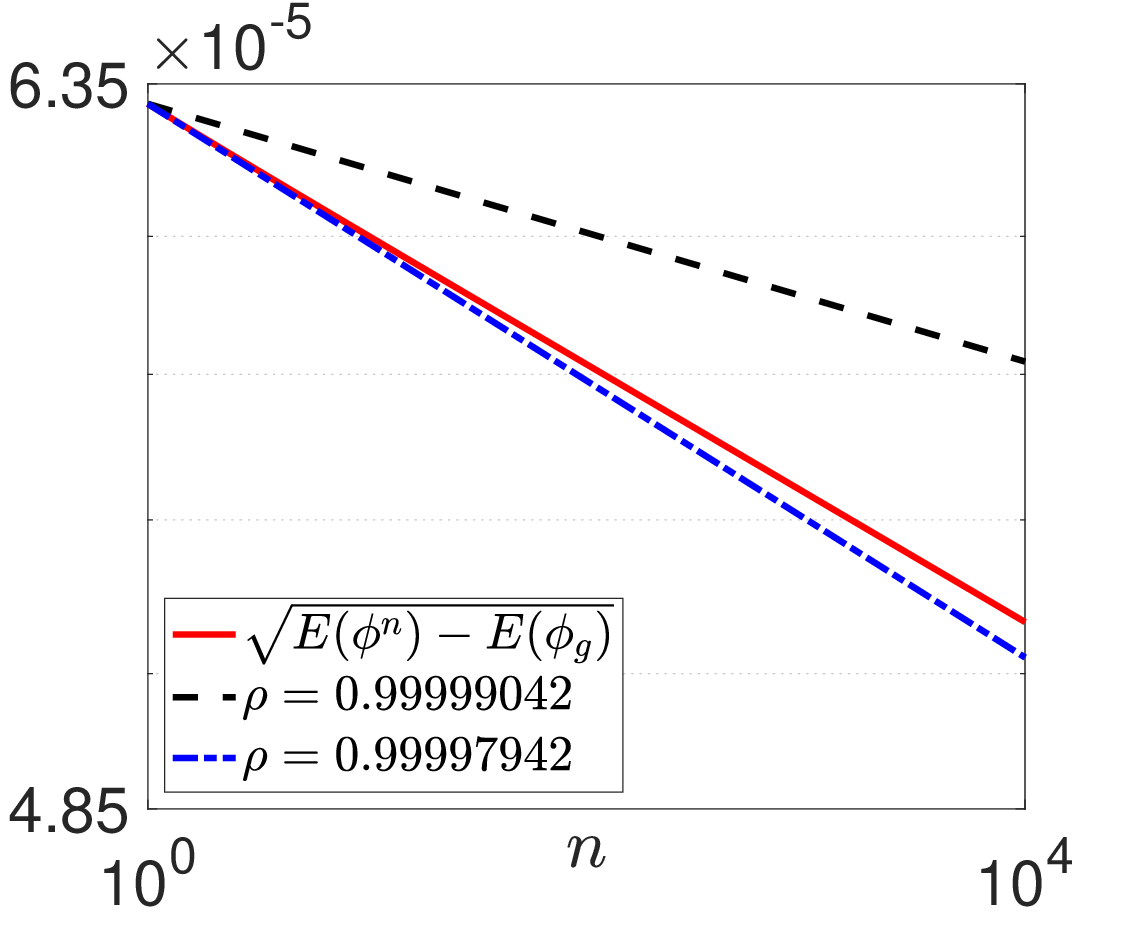}\qquad
		$(iv)$\;\includegraphics[height=4cm,width=5cm]{./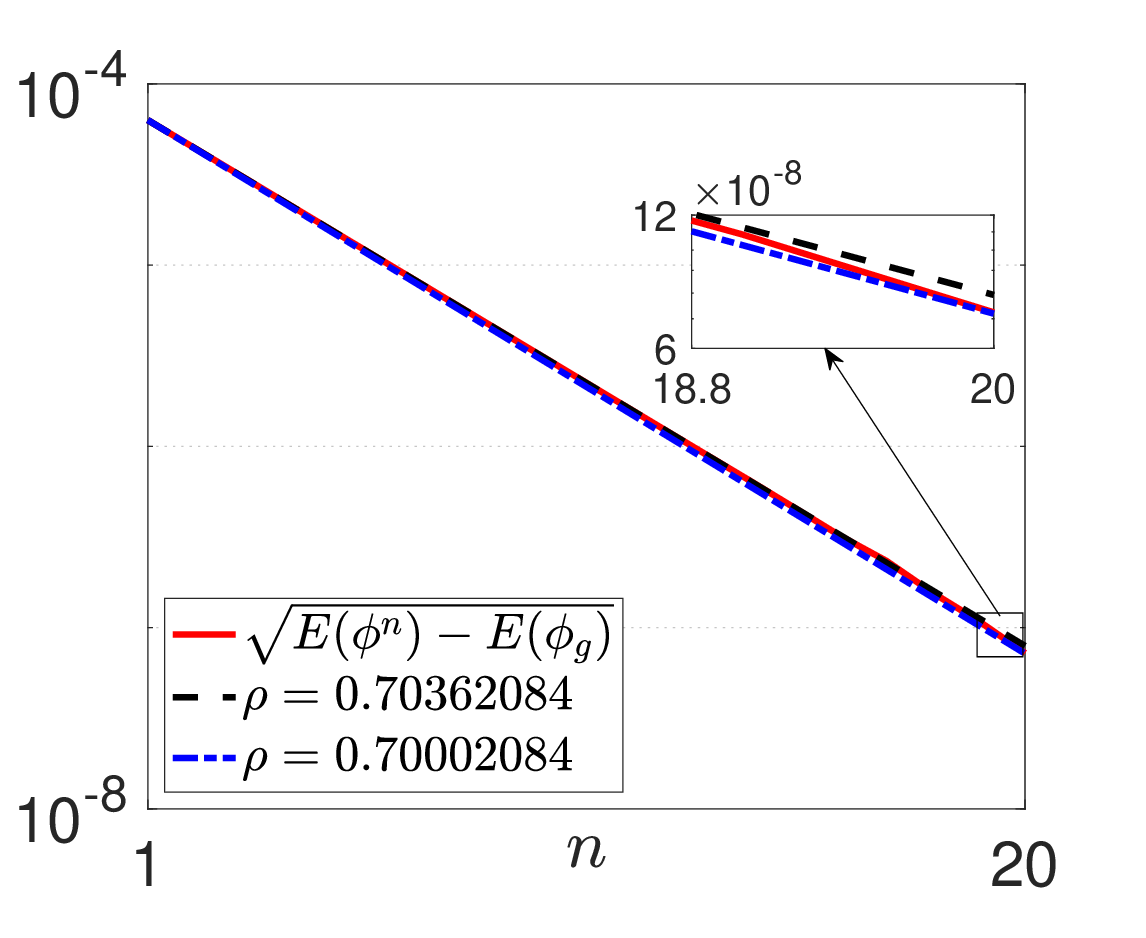}		
	}
	\caption{Plots of the error $\sqrt{E(\phi^n)-E(\phi_g)}\sim \mathcal{O}(\rho^n)$ w.r.t step $n$ for  the P-RG (the red-colored solid lines) with  preconditioners  $\mathcal{P}_1$- $\mathcal{P}_4$  
		(from (i) to (iv)) in example \ref{eg:conv_rate_test}:
		the black-colored dashed lines represent errors $\mathcal{O}(\rho^n)$ with 
		theoretical convergence rate  $\rho$ as predicted in theorems 
		\ref{Local-Convergence}-\ref{Var-P-convergence} and computed in table \ref{tab2}, while the 
		blue-colored dashdot lines  represent errors  $\mathcal{O}(\rho^n)$ with  $\rho$ sightly small than the actual convergence rate.}
	\label{fig2}
\end{figure}
%\vspace{-0.5cm}

%From all the tables and figures, we can draw the following conclusions:

%\begin{enumerate}
%	\item \textbf{Satisfaction of the Morse-Bott property:} $E$ is a
%Morse-Bott functional on $\mathcal{S}$. Specifically, $E''(\phi_g)-\lambda_g\mathcal{I}$ has only two zero eigenfunctions, namely $i\phi_g$ and $i\mathcal{L}_z\phi_g$. This verifies the rationality of our assumptions and provides a solid foundation for subsequent analysis.

%	\item \textbf{Accuracy of theoretical results:} The local convergence rates given by theorems are highly consistent with the numerical results, indicating that our theoretical predictions are very accurate. These results not only validate the correctness of our theoretical findings but also demonstrate their reliability in practical applications.

%	\item \textbf{Effectiveness of preconditioners:} As shown in Fig~\ref{fig2}, when starting with an initial value close to the ground state, i.e., $\|\phi^0-\phi_g\|_{\infty}\approx7\times10^{-4}$, the number of iterations required to achieve $\|\phi^0-\phi_g\|_{\infty}\le10^{-8}$ varies substantially among different preconditioners: $\mathcal{P}_1$ and $\mathcal{P}_2$ both require between $10^{6}$ and $10^{7}$ iterations, while $\mathcal{P}_3$ needs between $10^{5}$ and $10^{6}$ iterations, notably, $\mathcal{P}_4$ achieves the same accuracy in just 32 iterations. This numerical result further confirms the importance of selecting appropriate preconditioners to improve algorithm efficiency and provides valuable guidance for future research.
%\end{enumerate}

\section{Conclusion}\label{Sec6}

In this paper, according to the properties of Gross-Pitaevskii energy functional, the preconditioned Riemannian gradient methods (P-RG) 
are proposed to compute the minimizers of rotating Gross-Pitaevskii energy functional. We rigorously prove the global and quantifiable local 
convergence of these methods. Our analysis reveals that the local convergence rate  critically depends on the condition number 
of \( \mathcal{P}^{-1}_{\phi_g}(E''(\phi_g)-\lambda_g\mathcal{I}) \) on \( N_{\phi_g}\mathcal{M} \). 
This insight suggests that a practical preconditioner should follow \eqref{Varep-P}, i.e.,
$\mathcal{P}_{\phi}=E''(\phi)-\big(\left\langle\mathcal{H}_{\phi}\phi,\phi\right\rangle-\sigma_0\big)\mathcal{I}$, and for this specific choice, we establish an optimal local linear convergence rate. 
In the end, numerical experiments 
show the assumption, i.e. the Gross-Pitaevskii energy functional is a Morse-Bott functional, 
is justifiable, and also confirm the theoretical results.
This work provides a framework to develop and  analyze  preconditioned Riemannian gradient methods with quantifiable local convergence rate
to compute minimizer of the Gross-Pitaevskii energy functional. 
In addition, it can be applied to analyze all existing projected Sobolev gradient methods for minimizing the Gross-Pitaevskii energy functional,
and extended to similar problems such as computing minimizers of multi-component  Gross-Pitaevskii energy functional \cite{2024Riem}. {Finally, our theoretical analysis and numerical experiments show that appropriately reducing $\sigma_0$ improves the linear convergence rate. However, it remains an open question whether superlinear or even quadratic convergence can be achieved by taking $\sigma_0 \to 0^+$ (or setting $\sigma_0 = 0$) when the initial iterate is sufficiently close to the ground state $\phi_g$. This direction would naturally lead to second-order Riemannian optimization methods, such as adaptive regularized or standard Riemannian Newton methods, for the degenerate case, which we leave for future investigation.}
 \\
 \\
 \\

\appendix
\setcounter{equation}{0}
\section{Proof of {\bf Proposition } \ref{Prop1}}\label{Appendix_Pro}
\begin{proof} 
	For any $\phi\in\mathcal{S}$, we show that $i\phi$ and $i\mathcal{L}_z\phi$ are eigenfunctions of $E''(\phi)|_{T_{\phi}\mathcal{M}}$ with corresponding eigenvalue $\lambda_g$. The second order necessary condition shows that
	\begin{align*}
		\big\langle E''(\phi)v,v\big\rangle-\lambda_g(v,v)_{L^2}\ge0\quad \text{for all}\ v\in T_{\phi}\mathcal{M}.
	\end{align*}
	Taking curves $\gamma_1(t)=e^{it}\phi$ and $\gamma_2(t)=\phi(A_t\bm{x})$, we have identities $\big\|\gamma_i(t)\big\|_{L^2}^2\equiv\big\|\gamma_i(0)\big\|_{L^2}^2$ and $E(\gamma_i(t))\equiv E(\gamma_i(0))$ for $i=1,2$. The calculation of their second derivative reveals that
	\begin{align*}
		\frac{\text{d}^2}{\text{d}t^2}\big\|\gamma_i(t)\big\|_{L^2}^2&=2\big(\gamma'_i(t),\gamma'_i(t)\big)_{L^2}+2\big(\gamma''_i(t),\gamma_i(t)\big)_{L^2}=0,\\
		\frac{\text{d}^2}{\text{d}t^2}E(\gamma_i(t))&=\big\langle E''(\gamma_i(t))\gamma'_i(t),\gamma'_i(t)\big\rangle+\lambda_g\big( \gamma_i(t),\gamma''_i(t)\big)_{L^2}=0.
	\end{align*}
	Summing up, we obtain
	\begin{align*}
		\big\langle E''(\phi)\gamma'_i(0),\gamma'_i(0)\big\rangle-\lambda_g\big( \gamma'_i(0),\gamma'_i(0)\big)_{L^2}=0.
	\end{align*}
	For the Rayleigh quotient functional 
	\begin{align*}
		Q_{\phi}(v)=\big\langle E''(\phi)v,v\big\rangle\big/(v,v)_{L^2}\quad \text{for all}\ v\in T_{\phi_g}\mathcal{M}\backslash\{0\}, 
	\end{align*}
	we see that $\gamma'_i(0)$ corresponds to its minimum. Applying the first order necessary condition, we find that
	\begin{align*}
		E''(\phi)\gamma'_i(0)=\lambda_g\mathcal{I}\gamma'_i(0)\quad \text{on}\quad  T_{\phi}\mathcal{M}.
	\end{align*}
	Since $H_0^1(\mathcal{D})=\left(\left(\text{span}\left\{\phi\right\}\right)^{\bot}_{L^2}\cap H_0^1(\mathcal{D})\right)\oplus\text{span}\left\{\phi\right\}=T_{\phi}\mathcal{M}\oplus\text{span}\left\{\phi\right\}$, {and the spectral property on $T_\phi\mathcal{M}$ has already been established, we just need to verify that $v=\phi$ satisfies the eigenequation to extend the result to the full space $H_0^1(\mathcal{D})$.} It can be obtained by the following calculation
	\begin{align*}
		\left\langle E''(\phi)\gamma'_i(0),\phi\right\rangle
		&=\frac{\text{d}}{\text{d}t}\left( E(\gamma_i(t))+\int_{\mathcal{D}}\left(f(\rho_{\gamma_i})|\gamma_i(t)|^2-F(\rho_{\gamma_i})\right)\text{d}\bm{x}\right)\Bigg|_{t=0}\\
		&=\frac{\text{d}}{\text{d}t}\left( E(\phi)+\int_{\mathcal{D}}\left(f(\rho_{\phi})|\phi|^2-F(\rho_{\phi})\right)\text{d}\bm{x}\right)\Bigg|_{t=0}=0.
	\end{align*}
\end{proof}
\section{Proof of {\bf Proposition }  \ref{Prop2}}\label{Appendix_Pro1}
\begin{proof}
	First, for any $\phi\in\mathcal{S}$, we prove that the Rayleigh quotient functional $Q_\phi(\cdot)$ is bounded below and attains its minimum on $N_\phi\mathcal{M}$. Define:
	\begin{align*}
		\lambda_3 := \inf_{v \in N_{\phi}\mathcal{M}\backslash\{0\}} Q_{\phi}(v) = \inf_{\substack{v \in N_{\phi}\mathcal{M} \\ \|v\|_{L^2} = 1}} a(v,v).
	\end{align*}
	Let {$\{v_n\}_{n\in\mathbb{N}}\subseteq N_{\phi}\mathcal{M}$} be a sequence such that:
	\begin{align*}
		\|v_n\|_{L^2} = 1\quad\text{and}\quad\lim\limits_{n\to\infty}a(v_n, v_n)=\lambda_3.
	\end{align*}
	By the coercivity of $\mathcal{H}_0$ and $f\ge0$, we obtain the following lower bound estimate for the bilinear form $a(\cdot,\cdot)$
	\begin{align*}
		a(v,v) = \langle E''(\phi)v, v \rangle 
		&= \langle \mathcal{H}_0 v, v \rangle + (f(\rho_\phi)v, v)_{L^2} + \big\langle f'(\rho_\phi)(|\phi|^2 + \phi^2 \overline{\,\cdot\,})v, v\big\rangle \\
		&\ge C\|v\|_{H^1}^2 + \big\langle f'(\rho_\phi)(|\phi|^2 + \phi^2 \overline{\,\cdot\,})v, v\big\rangle.
	\end{align*}
	Using {\bf (A3)}, H\"older's inequality, the Gagliardo-Nirenberg inequality, and the weighted Young inequality, we derive
	\begin{align}\label{Positive1}
		\nonumber \big(f'(\rho_\phi)(|\phi|^2 + \phi^2 \overline{\,\cdot\,})v, v\big)_{L^2} 
		&\le C\|\phi\|_{L^6}^{1+\theta} \|v\|_{L^p}^2 
		\le C_\phi \|v\|_{L^2}^{2-(1-2/p)d} \|v\|_{H^1}^{(1-2/p)d} \\
		&\le C_\phi \left( \varepsilon^{-\frac{(1-2/p)d}{2-(1-2/p)d}} \|v\|_{L^2}^2 + \varepsilon \|v\|_{H^1}^2 \right),
	\end{align}
	where $p = 12/(5 - \theta) \in [12/5, 6)$. Taking $\varepsilon = C/(2C_\phi)$, we finally obtain:
	$$
	a(v, v) = \langle E''(\phi)v, v \rangle \ge \frac{C}{2} \|v\|_{H^1}^2 - C_\phi \|v\|_{L^2}^2.
	$$
	With this lower bound estimate, we have
	$$
	C \|v_n\|_{H^1}^2 \leq a(v_n, v_n) + C_\phi \leq \lambda_3 + \varepsilon_n + C_\phi \to \lambda_3 + C_\phi,
	$$
	which implies $\|v_n\|_{H^1} \leq C + C_\phi < \infty$, i.e., the sequence {$\{v_n\}_{n\in\mathbb{N}}\subseteq N_{\phi}\mathcal{M}$} is bounded in $H_0^1(\mathcal{D})$. Since $H_0^1(\mathcal{D})$ is a reflexive Banach space, there exists a subsequence (still denoted by $v_n$) and some $v^* \in H_0^1(\mathcal{D})$ such that
	$$
	v_n \rightharpoonup v^* \quad \text{weakly in } H_0^1(\mathcal{D}).
	$$
	Moreover, by the compact embedding $H_0^1(\mathcal{D}) \subset\subset L^2(\mathcal{D})$, we have
	$$
	v_n \to v^* \quad \text{strongly in } L^2(\mathcal{D}).
	$$
	It then follows that
	\begin{align*}
		\|v^*\|_{L^2} &= \lim_{n \to \infty} \|v_n\|_{L^2} = 1, \\
		(i\phi, v^*)_{L^2} &= \lim_{n\to \infty} (i\phi, v_n)_{L^2} = 0, \\
		(i\mathcal{L}_z\phi, v^*)_{L^2} &= \lim_{n\to \infty} (i\mathcal{L}_z\phi, v_n)_{L^2} = 0.
	\end{align*}
	This shows that $v^* \in N_{\phi}\mathcal{M} \setminus \{0\}$. Consider the functional $F(v) = a(v,v)$. Since the bilinear form $a(\cdot,\cdot)$ is symmetric and coercive, $F$ is convex and coercive, and is defined on $H_0^1(\mathcal{D})$. By a classical result in functional analysis:  
	a coercive, proper (not identically $+\infty$), and convex functional on a reflexive Banach space is weakly lower semicontinuous.  
	Therefore, we have
	$$
	a(v^*, v^*) \leq \liminf_{n \to \infty} a(v_n, v_n) = \lambda_3.
	$$
	On the other hand, since $\|v^*\|_{L^2} = 1$, by the definition of $\lambda_3$, we also have
	$$
	a(v^*, v^*) \geq \lambda_3.
	$$
	Combining both inequalities, we conclude
	$$
	a(v^*, v^*) = \lambda_3, \quad \|v^*\|_{L^2} = 1 \quad \Rightarrow \quad Q_{\phi}(v^*) = \lambda_3.
	$$
	This shows that the infimum $\lambda_3$ is attained by $v^* \in N_{\phi}\mathcal{M}$, which completes the proof. According to {\bf Definition~\ref{Morse-Bott-P}}, for any $\phi \in \mathcal{S}$, we have
	\begin{align}\label{lambda1}
		Q_{\phi}(v) \ge {\min_{v \in N_{\phi}\mathcal{M}\setminus \{0\}}} Q_{\phi}(v) := \lambda_3 > \lambda_g, \quad {\text{for all}}\;\, v \in N_{\phi}\mathcal{M} \setminus \{0\}.
	\end{align}
	The proof of coercivity on $N_{\phi}\mathcal{M}$ follows similarly to \cite{2024Convergence}, where a case-by-case analysis can be used to establish the coercivity (see \cite[{\bf Lemma 2.3}]{2024Convergence}). Specifically, we proceed as follows: for all $v\in N_{\phi}\mathcal{M}$,
	\begin{itemize}
		\item If $\|v\|^2_{H^1}>\frac{2C_{\phi}+2\lambda_g}{C}\|v\|^2_{L^2}$, then $-\left(C_{\phi}+\lambda_g\right)\|v\|^2_{L^2}>-\frac{C}{2}\|v\|^2_{H^1}$ and therefore
		\begin{align*}
			\left\langle \big(E''(\phi) - \lambda_g \mathcal{I}\big)v, v \right\rangle\ge C\|v\|^2_{H^1}-\left(C_{\phi}+\lambda_g\right)\|v\|^2_{L^2}\ge\frac{C}{2}\|v\|^2_{H^1}.
		\end{align*}
		\item If $\|v\|^2_{H^1}\le\frac{C_{\phi}+2\lambda_g}{C}\|v\|^2_{L^2}$, then $\|v\|^2_{L^2}\ge\frac{C}{C_{\phi}+2\lambda_g}\|v\|^2_{H^1}$, which yields
		\begin{align*}
			\left\langle \big(E''(\phi) - \lambda_g \mathcal{I}\big)v, v \right\rangle\ge\left(\lambda_3-\lambda_g\right)\|v\|^2_{L^2}\ge\frac{C(\lambda_3-\lambda_g)}{2C_{\phi}+2\lambda_g}\|v\|^2_{H^1}.
		\end{align*}
	\end{itemize}
	\noindent	This proof is completed.
\end{proof}

\section{Proof of {\bf Proposition }  \ref{Property-of-E''}}\label{Appendix-A}
\begin{proof}
	\begin{itemize}[label={},left=0pt]
		\item $(i)$ Due to the phase shift and coordinate rotation invariance of the Gross-Pitaevskii energy functional $E$, for any $\phi, v \in H_0^1(\mathcal{D})$, we have
		\begin{align*}
			E(I_{\alpha}^{\beta}(\phi + t v)) \equiv E(\phi + t v), \quad {\text{for all}}\; \alpha, \beta \in [-\pi, \pi) \; \text{and} \;  t \in \mathbb{R}.
		\end{align*}
		This implies
		\begin{align*}
			&\frac{\mathrm{d}^2}{\mathrm{d}t^2} E(I_{\alpha}^{\beta}(\phi + t v)) \Bigg|_{t=0} = \frac{\mathrm{d}^2}{\mathrm{d}t^2} E(\phi + t v) \Bigg|_{t=0} \\
			\Longrightarrow\ &\left\langle E''(I_{\alpha}^{\beta}\phi) I_{\alpha}^{\beta}v, I_{\alpha}^{\beta}v \right\rangle = \left\langle E''(\phi) v, v \right\rangle.
		\end{align*}
		\item $(ii)$ Using the continuity of $\mathcal{H}_{\phi}$, H\"older's inequality, and the Sobolev embedding $H_0^1(\mathcal{D}) \subset L^{p}(\mathcal{D})$ for $d \le 3$ and $1 \le p \le 6$, we obtain
	\begin{align*}
			\left|\left\langle E''(\phi)u, v\right\rangle\right|
			&= \left|\left\langle\mathcal{H}_{0}u, v\right\rangle + \big\langle f(\rho_{\phi})u, v\big\rangle + \left\langle f'(\rho_{\phi})\big(|\phi|^2 + \phi^2 \overline{\,\cdot\,}\big)u, v\right\rangle\right| \\
			&\le C_{\phi}\|u\|_{H^1}\|v\|_{H^1} + C\|\phi\|^{1+\theta}_{L^{6}}\|u\|_{H^1}\|v\|_{H^1} \le C_{\phi}\|u\|_{H^1}\|v\|_{H^1}.
		\end{align*}
		\item $(iii)$ Using the inequality $|a^{1+\theta}-b^{1+\theta}|\le C(a^{\theta}+b^{\theta})|a-b|$ for all $a,b\ge0$, we have
		\begin{align}\label{f-continu}
			\left|f(\rho_{\phi})-f(\rho_{\psi})\right|=\left|\int_{\rho_{\psi}}^{\rho_{\phi}}f'(s)\;\text{d}s\right|\le C\left(|\phi|^{\theta}+|\psi|^{\theta}\right)|\phi-\psi|.
		\end{align}
		Using {\bf (A3)} again, we get
		\begin{align}\label{f'-continu}
			\nonumber&\left|f'(\rho_{\phi})|\phi|^2\frac{\phi^2}{|\phi|^2}-f'(\rho_{\psi})|\psi|^2\frac{\psi^2}{|\psi|^2}\right|\\
			\nonumber&\qquad\qquad\le \left|f'(\rho_{\phi})|\phi|^2\frac{\phi^2}{|\phi|^2}-f'(\rho_{\psi})|\psi|^2\frac{\phi^2}{|\phi|^2}\right|+\left|f'(\rho_{\psi})|\psi|^2\left(\frac{\phi^2}{|\phi|^2}-\frac{\psi^2}{|\psi|^2}\right)\right|\\
			\nonumber&\qquad\qquad\le\left|f'(\rho_{\phi})|\phi|^2-f'(\rho_{\psi})|\psi|^2\right|+C|\psi|^{1+\theta}\left|\frac{\phi^2|\psi|^2-|\phi|^2\psi^2}{|\phi|^2|\psi|^2}\right|\\
			\nonumber&\qquad\qquad\le C\left(|\phi|^{\theta}+|\psi|^{\theta}\right)\left|\phi-\psi\right|+C|\psi|^{1+\theta}\left|\frac{{\phi\big(\overline{\psi}-\overline{\phi}\big)}+\overline{\phi}\left(\phi-\psi\right)}{|\phi||\psi|}\right|\\
			&\qquad\qquad\le C\left(|\phi|^{\theta}+|\psi|^{\theta}\right)\left|\phi-\psi\right|.
		\end{align}
		Using the above results, the H\"older inequality, and $H_0^1(\mathcal{D})\subset L^{p_0}(\mathcal{D})$ ($p_0=6/(4-\theta)\in\left[\frac{3}{2},6\right)$), our conclusion is as follows
		\begin{align}\label{E''-continu}
			\nonumber&\left|\left\langle \big(E''(\phi)-E''(\psi)\big)u,v\right\rangle\right|\\
		\nonumber&=\left|\left\langle\big(f(\rho_{\phi})-f(\rho_{\psi})+f'(\rho_{\phi})\big(|\phi|^2+(\phi)^2\overline{\cdot}\big)-f'(\rho_{\psi})\big(|\psi|^2+(\psi)^2\overline{\cdot}\big)\big)u,v\right\rangle\right|\\
			\nonumber&\le C\left\langle\big(|\phi|^{\theta}+|\psi|^{\theta}\big)\left|\phi-\psi\right||u|,|v|\right\rangle\\
			\nonumber&\le C\left(\|\phi\|^{\theta}_{L^6}+\|\psi\|^{\theta}_{L^6}\right)\|u\|_{L^6}\|v\|_{L^6}\|\phi-\psi\|_{L^{p_0}}\\
			&\le C_{\phi,\psi}\|u\|_{H^1}\|v\|_{H^1}\|\phi-\psi\|_{L^{p_0}}.
		\end{align}
		\item $(iv)$ Using the Taylor's formula and $(iii)$, the final conclusion is obtained as follow
		{\rm\begin{align}
				\nonumber E(\phi+v)&-E(\phi)-\left\langle E'(\phi),v\right\rangle\\
				\nonumber&=\int_0^1\int_0^t\left\langle \big(E''(\phi+sv)-E''(\phi)\big)v,v\right\rangle\text{d}s\text{d}t+\frac{1}{2}\big\langle E''(\phi)v,v\big\rangle\\
				&\le C_{\phi,v}\|v\|^3_{H^1}\int_0^1\int_0^ts\;\text{d}s\text{d}t+\frac{1}{2}\big\langle E''(\phi)v,v\big\rangle\le C_{\phi,v}\|v\|^3_{H^1}+\frac{1}{2}\big\langle E''(\phi)v,v\big\rangle.\label{Order3}
		\end{align}}
	\end{itemize}
\end{proof}

\section{Proof of {\bf Proposition }  \ref{Property-P}}\label{Appendix-B}
\begin{proof}
	\begin{itemize}[label={},left=0pt]
		\item $(i)$ Let us first prove that $0 < \mu \le L < \infty$ for $\phi = \phi_g$.  
		By \textbf{Proposition \ref{Prop2}}, \textbf{Proposition \ref{Property-of-E''}}-$(ii)$, and assumption \textbf{(A6)}-$(ii)$, we obtain the following estimates:
		
		\begin{itemize}
			\item for all nonzero $v \in N_{\phi_g}\mathcal{M}$,
			\begin{align*}
				\frac{\big\langle \big(E''(\phi_g)-\lambda_g\mathcal{I}\big)v,v\big\rangle}{\big\langle\mathcal{P}_{\phi_g}v,v\big\rangle}
				\ge \frac{C\|v\|_{H^1}^2}{\langle\mathcal{P}_{\phi_g}v,v\rangle}
				\ge \frac{C\|v\|_{H^1}^2}{C_{\phi_g}\|v\|_{H^1}^2}
				= \frac{C}{C_{\phi_g}} > 0;
			\end{align*}
			
			\item for all nonzero $v \in T_{\phi_g}\mathcal{M}$,
			\begin{align*}
				\frac{\big\langle \big(E''(\phi_g)-\lambda_g\mathcal{I}\big)v,v\big\rangle}{\big\langle\mathcal{P}_{\phi_g}v,v\big\rangle}
				\le \frac{C_{\phi_g}\|v\|_{H^1}^2}{\langle\mathcal{P}_{\phi_g}v,v\rangle}
				\le \frac{C_{\phi_g}\|v\|_{H^1}^2}{C\|v\|_{H^1}^2}
				= \frac{C_{\phi_g}}{C} < \infty.
			\end{align*}
		\end{itemize}
		 Consequently,
			\begin{align*}
				0 < \inf_{v \in N_{\phi_g}\mathcal{M} \setminus \{0\}} 
				\frac{\big\langle (E''(\phi_g) - \lambda_g \mathcal{I}) v, v \big\rangle}{\langle \mathcal{P}_{\phi_g} v, v \rangle}
				= \mu
				\le
				L = \sup_{v \in T_{\phi_g}\mathcal{M} \setminus \{0\}} 
				\frac{\big\langle (E''(\phi_g) - \lambda_g \mathcal{I}) v, v \big\rangle}{\langle \mathcal{P}_{\phi_g} v, v \rangle}
				< \infty.
		\end{align*}
		
		Next, by \textbf{Proposition \ref{Property-of-E''}}-$(i)$ and \textbf{(A6)}-$(i)$, for any $\phi \in \mathcal{S}$, i.e., $\phi = I_\alpha^\beta \phi_g$, we have the identity
		\begin{align*}
			\frac{\big\langle (E''(\phi) - \lambda_g \mathcal{I}) v, v \big\rangle}{\langle \mathcal{P}_\phi v, v \rangle}
			=
			\frac{\big\langle (E''(\phi_g) - \lambda_g \mathcal{I}) I_{-\alpha}^{-\beta} v,\, I_{-\alpha}^{-\beta} v \big\rangle}{\langle \mathcal{P}_{\phi_g} I_{-\alpha}^{-\beta} v,\, I_{-\alpha}^{-\beta} v \rangle}.
		\end{align*}
		
		Moreover, if $v \in N_\phi \mathcal{M}$ (resp. $v \in T_\phi \mathcal{M}$), then $I_{-\alpha}^{-\beta} v \in N_{\phi_g} \mathcal{M}$ (resp. $I_{-\alpha}^{-\beta} v \in T_{\phi_g} \mathcal{M}$). Therefore, the same lower and upper bounds carry over to all $\phi \in \mathcal{S}$, yielding
			\begin{align*}
				0 < \inf_{v \in N_{\phi}\mathcal{M} \setminus \{0\}} 
				\frac{\big\langle (E''(\phi) - \lambda_g \mathcal{I}) v, v \big\rangle}{\langle \mathcal{P}_{\phi} v, v \rangle}
				= \mu
				\le
				L = \sup_{v \in T_{\phi}\mathcal{M} \setminus \{0\}} 
				\frac{\big\langle (E''(\phi) - \lambda_g \mathcal{I}) v, v \big\rangle}{\langle \mathcal{P}_{\phi} v, v \rangle}
				< \infty.
		\end{align*}
		\item $(ii)$ Noting that
		\begin{align*}
			\big\|\mathcal{H}_{\phi}v\big\|_{H^{-1}}=\sup\limits_{u\in H_0^1(\mathcal{D})}\frac{\big\langle \mathcal{H}_{\phi}v,u\big\rangle}{\quad\|u\|_{H^1}}\le C_{\phi}\|v\|_{H^1},
		\end{align*}
		we have $\big\|\mathcal{P}_{\phi}^{-1}\mathcal{H}_{\phi}v\big\|_{H^1}\le C	\big\|\mathcal{H}_{\phi}v\big\|_ {H^{-1}}\le C_{\phi}\|v\|_{H^1}$. Using {\bf (A6)}-$(iv)$, \eqref{f'-continu}, and $L^q(\mathcal{D})\subset L^p(\mathcal{D})$ for $ 1\le p\le q$, the estimation is derived
		\begin{align*}
			\big\|\mathcal{P}_{\phi}^{-1}(\mathcal{H}_{\phi}-\mathcal{P}_{\phi})v\big\|_{H^1}&= \frac{1}{2}\left\|\mathcal{P}_{\phi}^{-1}\left(E''(\phi)- \mathcal{P}_{\phi}-f'(\rho_{\phi})\big(|\phi|^2+\phi^2\overline{\cdot}\big)\right)v\right\|_{H^1}\\
			&\le C_{\phi}\left(\left\|\mathcal{P}_{\phi}^{-1}\left(E''(\phi)- \mathcal{P}_{\phi}\right)v\right\|_{H^1}+\left\|\big(f'(\rho_{\phi})\big(|\phi|^2+\phi^2\overline{\cdot}\big)\big)v\right\|_{L^{6/5}}\right) \\
			&\le C_{\phi}\left(\|v\|_{L^{p_2}}+\|v\|_{L^{p_0}}\right)\le C_{\phi}\|v\|_{L^{p}},
		\end{align*}
		where $p=\max\{p_0,p_2\}\in[1,6)$, and the penultimate inequality is a consequence of the following result:
		\begin{align*}
			\left\|\mathcal{P}_{\phi}^{-1}\big(f'(\rho_{\phi})\big(|\phi|^2+\phi^2\overline{\cdot}\big)\big)v\right\|^2_{H^1}&\le C_{\phi}\left\|\mathcal{P}_{\phi}^{-1}\big(f'(\rho_{\phi})\big(|\phi|^2+\phi^2\overline{\cdot}\big)\big)v\right\|^2_{\mathcal{P}_{\phi}}\\
			&=C_{\phi}\left\langle\big(f'(\rho_{\phi})\big(|\phi|^2+\phi^2\overline{\cdot}\big)\big)v ,\mathcal{P}_{\phi}^{-1}\big(f'(\rho_{\phi})\big(|\phi|^2+\phi^2\overline{\cdot}\big)\big)v\right\rangle\\
			&\le C_{\phi}\left\|\mathcal{P}_{\phi}^{-1}\big(f'(\rho_{\phi})\big(|\phi|^2+\phi^2\overline{\cdot}\big)\big)v\right\|_{L^6}\left\|\big(f'(\rho_{\phi})\big(|\phi|^2+\phi^2\overline{\cdot}\big)\big)v\right\|_{L^{6/5}}\\
			&\le C_{\phi}\left\|\mathcal{P}_{\phi}^{-1}\big(f'(\rho_{\phi})\big(|\phi|^2+\phi^2\overline{\cdot}\big)\big)v\right\|_{H^1}\left\||\phi|^{\theta+1}|v|\right\|_{L^{6/5}}\\
			&\le C_{\phi}\left\|\mathcal{P}_{\phi}^{-1}\big(f'(\rho_{\phi})\big(|\phi|^2+\phi^2\overline{\cdot}\big)\big)v\right\|_{H^1}\|\phi\|^{\theta+1}_{L^{6}}\|v\|_{L^{p_0}}.
		\end{align*}
		\item $(iii)$  This is analogous to $\mathcal{P}_{\phi}=-\frac{1}{2}\Delta$ (see \cite[{\bf Lemma 5.2}]{2024On}). According to the identity
		\begin{align*}%\label{gradient-diff}
			\nabla_{\mathcal{P}}^{\mathcal{R}}E(\phi)-\nabla_{\mathcal{P}}^{\mathcal{R}}E(\psi)&=\text{Proj}^{\mathcal{P}_{\phi}}_{\phi}\left(\mathcal{P}^{-1}_{\phi}\mathcal{H}_{\phi}\phi-\mathcal{P}^{-1}_{\psi}\mathcal{H}_{\psi}\psi\right)\\
			&+\left(\text{Proj}_{\phi}^{\mathcal{P}_{\phi}}-\text{Proj}_{\psi}^{\mathcal{P}_{\psi}}\right)\mathcal{P}^{-1}_{\psi}\mathcal{H}_{\psi}\psi,
		\end{align*}
		we can get the continuity of $\nabla^{\mathcal{R}}_{\mathcal{P}} E(\phi)$ by proving that $\text{Proj}^{\mathcal{P}_{\phi}}_{\phi}$ and $\mathcal{P}_{\phi}^{-1}\mathcal{H}_{\phi}\phi$ are continuous. The continuity of $\mathcal{P}_{\phi}^{-1}\mathcal{H}_{\phi}\phi$ is considered first.
		By the direct calculation, we have
		\begin{align}
			\nonumber\mathcal{P}_{\phi}^{-1}\mathcal{H}_{\phi}\phi-\mathcal{P}_{\psi}^{-1}\mathcal{H}_{\psi}\psi&=\big(\mathcal{P}_{\phi}^{-1}-\mathcal{P}_{\psi}^{-1}\big)\mathcal{H}_{\phi}\phi\\
			&+\mathcal{P}_{\psi}^{-1}\big(\mathcal{H}_{\phi}-\mathcal{H}_{\psi}\big)\phi+\mathcal{P}_{\psi}^{-1}\left(\mathcal{H}_{\psi}-\mathcal{P}_{\psi}\right)(\phi-\psi)+(\phi-\psi).\label{four-part}
		\end{align}
		Based on {\bf (A6)}-$(ii)$ and -$(iii)$, and {\bf Proposition \ref{Property-P}}-$(ii)$, the following inequality holds
		\begin{align}\label{1-Part}
			\nonumber\left\|\big(\mathcal{P}_{\phi}^{-1}-\mathcal{P}_{\psi}^{-1}\big)\mathcal{H}_{\phi}\phi\right\|^2_{H^1}&=\big\|\mathcal{P}_{\psi}^{-1}\big(\mathcal{P}_{\psi}-\mathcal{P}_{\phi}\big)\mathcal{P}_{\phi}^{-1}\mathcal{H}_{\phi}\phi\big\|^2_{H^1}\\
			\nonumber&\le C_{\phi}\big\|\mathcal{P}_{\psi}^{-1}\big(\mathcal{P}_{\psi}-\mathcal{P}_{\phi}\big)\mathcal{P}_{\phi}^{-1}\mathcal{H}_{\phi}\phi\big\|^2_{\mathcal{P}_{\psi}}\\
			\nonumber&=C_{\phi}\left\langle\big(\mathcal{P}_{\psi}-\mathcal{P}_{\phi}\big)\mathcal{P}_{\phi}^{-1}\mathcal{H}_{\phi}\phi,\mathcal{P}_{\psi}^{-1}\big(\mathcal{P}_{\psi}-\mathcal{P}_{\phi}\big)\mathcal{P}_{\phi}^{-1}\mathcal{H}_{\phi}\phi\right\rangle\\
			\nonumber&\le C_{\phi} \big\|\mathcal{P}_{\psi}^{-1}\big(\mathcal{P}_{\psi}-\mathcal{P}_{\phi}\big)\mathcal{P}_{\phi}^{-1}\mathcal{H}_{\phi}\phi\big\|_{H^1}\big\|\mathcal{P}_{\phi}^{-1}\mathcal{H}_{\phi}\phi\big\|_{H^1}\|\phi-\psi\|_{L^{p_1}}\\
			&\le C_{\phi}\big\|\big(\mathcal{P}_{\phi}^{-1}-\mathcal{P}_{\psi}^{-1}\big)\mathcal{H}_{\phi}\phi\big\|_{H^1}\|\phi-\psi\|_{L^{p_1}}.
		\end{align}
		This suggests that $\big\|\big(\mathcal{P}_{\phi}^{-1}-\mathcal{P}_{\psi}^{-1}\big)\mathcal{H}_{\phi}\phi\big\|_{H^1}\le C_{\phi}\|\phi-\psi\|_{L^{p_1}}$.
		 $\mathcal{P}_{\psi}^{-1}\big(\mathcal{H}_{\phi}-\mathcal{H}_{\psi}\big)\phi$, recalling  \eqref{f-continu}, we derive
		\begin{align}\label{2-Part}
			\nonumber\big\|\mathcal{P}_{\psi}^{-1}\big(\mathcal{H}_{\phi}-\mathcal{H}_{\psi}\big)\phi\big\|_{H^1}&=	\left\|\mathcal{P}_{\psi}^{-1}\big(f(\rho_{\phi})-f(\rho_{\psi})\big)\phi\right\|_{H^1}\le C_{\phi}\left\|\big(f(\rho_{\phi})-f(\rho_{\psi})\big)\phi\right\|_{L^{6/5}}\\
			&\le C_{\phi}\left\||\phi|\big(|\phi|^{\theta}+|\psi|^{\theta}\big)|\phi-\psi|\right\|_{L^{6/5}}\le C_{\phi}\|\phi-\psi\|_{L^{p_0}}.
		\end{align}
		{\bf Proposition \ref{Property-P}}-$(ii)$ shows directly that
		\begin{align}\label{3-Part}
			\big\|\mathcal{P}_{\psi}^{-1}\big(\mathcal{H}_{\psi}-\mathcal{P}_{\psi}\big)(\phi-\psi)\big\|_{H^1}\le C_{\phi}\left(\|\phi-\psi\|_{L^{p_0}}+\|\phi-\psi\|_{L^{p_2}}\right).
		\end{align}
		In conjunction with \eqref{four-part}-\eqref{3-Part}, $L^q(\mathcal{D})\subset L^p(\mathcal{D})\; (1\le p\le q)$, and $H^1(\mathcal{D})\subset L^p(\mathcal{D})\; (1\le p\le6)$, we get 
		\begin{align}
			\label{L-H}
			\big\|\mathcal{P}_{\phi}^{-1}\mathcal{H}_{\phi}\phi-\mathcal{P}_{\psi}^{-1}\mathcal{H}_{\psi}\psi\big\|_{H^1}&\le C_{\phi}\|\phi-\psi\|_{H^1},\\
			\big\|\mathcal{P}_{\phi}^{-1}\mathcal{H}_{\phi}\phi-\phi-\mathcal{P}_{\psi}^{-1}\mathcal{H}_{\psi}\psi+\psi\big\|_{H^1}&\le C_{\phi}\|\phi-\psi\|_{L^p},\label{L-H1}
		\end{align}
		where $p=\max\{p_0,p_1,p_2\}\in[1,6)$. Then, we consider the continuity of $\text{Proj}^{\mathcal{P}_{\phi}}_{\phi}$.
		For all $v\in H_0^1(\mathcal{D})$, we have
		\begin{align}\label{Proj-diff}
			\nonumber\Big(\text{Proj}^{\mathcal{P}_{\phi}}_{\phi}-\text{Proj}^{\mathcal{P}_{\psi}}_{\psi}\Big)v&=\frac{(\phi,v)_{L^2}}{\big(\phi,\mathcal{P}_{\phi}^{-1}\mathcal{I}\phi\big)_{L^2}}\mathcal{P}^{-1}_{\phi}\mathcal{I}\phi-\frac{(\psi,v)_{L^2}}{\big(\psi,\mathcal{P}_{\psi}^{-1}\mathcal{I}\psi\big)_{L^2}}\mathcal{P}^{-1}_{\psi}\mathcal{I}\psi\\
			\nonumber&=\frac{(\phi,v)_{L^2}}{\big(\phi,\mathcal{P}_{\phi}^{-1}\mathcal{I}\phi\big)_{L^2}}\big(\mathcal{P}^{-1}_{\phi}\mathcal{I}\phi-\mathcal{P}^{-1}_{\psi}\mathcal{I}\psi\big)\\&+\Bigg(\frac{(\phi,v)_{L^2}}{\big(\phi,\mathcal{P}_{\phi}^{-1}\mathcal{I}\phi\big)_{L^2}}-\frac{(\psi,v)_{L^2}}{\big(\psi,\mathcal{P}_{\psi}^{-1}\mathcal{I}\psi\big)_{L^2}}\Bigg)\mathcal{P}^{-1}_{\psi}\mathcal{I}\psi.
		\end{align}
		Similarly, by replacing $\mathcal{H}_{\phi}$ and $\mathcal{H}_{\psi}$ with $\mathcal{I}$ in \eqref{four-part}-\eqref{3-Part}, and combining these with {\bf Proposition \ref{Property-P}}-$(ii)$, we derive the continuity of $\mathcal{P}_{\phi}^{-1}\mathcal{I}\phi$ as follows
		\begin{align}\label{L-I}
			\nonumber\big\|\mathcal{P}_{\phi}^{-1}\mathcal{I}\phi-\mathcal{P}_{\psi}^{-1}\mathcal{I}\psi\big\|_{H^1}&\le \nonumber\big\|\big(\mathcal{P}_{\phi}^{-1}-\mathcal{P}_{\psi}^{-1}\big)\mathcal{I}\phi\big\|_{H^1}+\big\|\mathcal{P}_{\psi}^{-1}\mathcal{I}(\phi-\psi)\big\|_{H^1}\\ 
			&\le C_{\phi}\left(\|\phi-\psi\|_{L^{p_0}}+\|\phi-\psi\|_{L^{p_1}}+\|\phi-\psi\|_{L^{p_2}}+\|\phi-\psi\|_{L^2}\right)\\
			\nonumber&\le C_{\phi}\|\phi-\psi\|_{H^1}.
		\end{align}
		Calculating directly yields the following results
		\begin{align}\label{Orthogonalization-Coefficient-diff}
			\nonumber\frac{(\phi,v)_{L^2}}{\big(\phi,\mathcal{P}_{\phi}^{-1}\mathcal{I}\phi\big)_{L^2}}-\frac{(\psi,v)_{L^2}}{\big(\psi,\mathcal{P}_{\psi}^{-1}\mathcal{I}\psi\big)_{L^2}}&=\frac{(\phi,v)_{L^2}-(\psi,v)_{L^2}}{\big(\phi,\mathcal{P}_{\phi}^{-1}\mathcal{I}\phi\big)_{L^2}}\\
			&-\frac{(\psi,v)_{L^2}\big(\big(\phi,\mathcal{P}_{\phi}^{-1}\mathcal{I}\phi\big)_{L^2}-\big(\psi,\mathcal{P}_{\psi}^{-1}\mathcal{I}\psi\big)_{L^2}\big)}{\big(\phi,\mathcal{P}_{\phi}^{-1}\mathcal{I}\phi\big)_{L^2}\big(\psi,\mathcal{P}_{\psi}^{-1}\mathcal{I}\psi\big)_{L^2}}.
		\end{align}
		Combining Cauchy’s inequality and \eqref{L-I} results in
		\begin{align}
			\left|(\phi,v)_{L^2}-(\psi,v)_{L^2}\right|&\le \|v\|_{L^2}\|\phi-\psi\|_{L^2}\\
			\nonumber\left|\big(\phi,\mathcal{P}_{\phi}^{-1}\mathcal{I}\phi\big)_{L^2}-\big(\psi,\mathcal{P}_{\psi}^{-1}\mathcal{I}\psi\big)_{L^2}\right|&=\left|\big(\phi,\mathcal{P}_{\phi}^{-1}\mathcal{I}\phi-\mathcal{P}_{\psi}^{-1}\mathcal{I}\psi\big)_{L^2}+\big(\phi-\psi,\mathcal{P}_{\psi}^{-1}\mathcal{I}\psi\big)_{L^2}\right|\\
			&\le C_{\phi}\left(\|\phi-\psi\|_{L^{p_1}}+\|\phi-\psi\|_{L^2}\right)\label{lamb-denominator-dif}
		\end{align}
		Using the above inequality, we derive
		\begin{align}
			\nonumber\big(\psi,\mathcal{P}_{\psi}^{-1}\mathcal{I}\psi\big)_{L^2}&\ge\big(\phi,\mathcal{P}_{\phi}^{-1}\mathcal{I}\phi\big)_{L^2}-\big|\big(\phi,\mathcal{P}_{\phi}^{-1}\mathcal{I}\phi\big)_{L^2}-\big(\psi,\mathcal{P}_{\psi}^{-1}\mathcal{I}\psi\big)_{L^2}\big|\\
			&\ge\big(\phi,\mathcal{P}_{\phi}^{-1}\mathcal{I}\phi\big)_{L^2}-C_{\phi}\left(\|\phi-\psi\|_{L^{p_1}}+\|\phi-\psi\|_{L^2}\right)\\
			\nonumber&\ge\big(\phi,\mathcal{P}_{\phi}^{-1}\mathcal{I}\phi\big)_{L^2}-C_{\phi}\|\phi-\psi\|_{H^1}.
		\end{align}
		Since $\mathcal{P}_{\phi}^{-1}\mathcal{I}\phi=0$ if and only if $\phi=0$, then there exists a sufficiently small $\sigma$ such that for all $\psi \in \mathcal{B}_{\sigma}(\phi)$,
		\begin{align}\label{Positive-denominator}
			\big(\psi,\mathcal{P}_{\psi}^{-1}\mathcal{I}\psi\big)_{L^2}\ge C>0.
		\end{align} 
		By \eqref{Orthogonalization-Coefficient-diff}-\eqref{Positive-denominator}, for all $\psi \in \mathcal{B}_{\sigma}(\phi)$, we get
		\begin{align}\label{L-Orthogonalization-Coefficient}
			\Bigg|\frac{(\phi,v)_{L^2}}{\big(\phi,\mathcal{P}_{\phi}^{-1}\mathcal{I}\phi\big)_{L^2}}-\frac{(\psi,v)_{L^2}}{\big(\psi,\mathcal{P}_{\psi}^{-1}\mathcal{I}\psi\big)_{L^2}}\Bigg|\le C_{\phi}\|v\|_{L^2}\left(\|\phi-\psi\|_{L^{p_1}}+\|\phi-\psi\|_{L^2}\right).
		\end{align}
		Hence, the continuity of $\text{Proj}^{\mathcal{P}_{\phi}}_{\phi}$ is derived through \eqref{Proj-diff}, \eqref{L-I} and \eqref{L-Orthogonalization-Coefficient}, i.e., for all $v\in H_0^1(\mathcal{D})$
		\begin{align}\label{L-Proj}
			\left\|\left(\text{Proj}^{\mathcal{P}_{\phi}}_{\phi}-\text{Proj}^{\mathcal{P}_{\psi}}_{\psi}\right)v\right\|_{H^1}&\le C_{\phi}\|v\|_{L^2}\left(\|\phi-\psi\|_{L^{p_1}}+\|\phi-\psi\|_{L^2}\right)\\
			\nonumber&\le C_{\phi}\|v\|_{L^2}\|\phi-\psi\|_{H^1}.
		\end{align}
		The local Lipschitz continuity of Riemannian gradient is also obtained by
		\begin{align*}
			\Big\|\nabla^{\mathcal{R}}_{\mathcal{P}} E&(\phi)-\nabla^{\mathcal{R}}_{\mathcal{P}}E(\psi)\Big\|_{H^1}=\left\|\text{Proj}^{\mathcal{P}_{\phi}}_{\phi}\mathcal{P}^{-1}_{\phi}\mathcal{H}_{\phi}\phi-\text{Proj}^{\mathcal{P}_{\psi}}_{\psi}\mathcal{P}^{-1}_{\psi}\mathcal{H}_{\psi}\psi\right\|_{H^1}\\
			&\le\left\|\left(\text{Proj}^{\mathcal{P}_{\phi}}_{\phi}-\text{Proj}^{\mathcal{P}_{\psi}}_{\psi}\right)\mathcal{P}^{-1}_{\phi}\mathcal{H}_{\phi}\phi\right\|_{H^1}+\left\|\text{Proj}^{\mathcal{P}_{\psi}}_{\psi}\left(\mathcal{P}^{-1}_{\phi}\mathcal{H}_{\phi}\phi-\mathcal{P}^{-1}_{\psi}\mathcal{H}_{\psi}\psi\right)\right\|_{H^1}\\
			&\le C_{\phi}\|\phi-\psi\|_{H^1}.
		\end{align*}
		Then, based on the identity
		\begin{align*}
			\lambda_{\phi}-\lambda_{\psi}&=\frac{(\phi,\phi)_{L^2}}{\big(\phi,\mathcal{P}_{\phi}^{-1}\mathcal{I}\phi\big)_{L^2}}-\frac{(\psi,\psi)_{L^2}}{\big(\psi,\mathcal{P}_{\psi}^{-1}\mathcal{I}\psi\big)_{L^2}}+\frac{(\phi,\mathcal{P}^{-1}_{\phi}\mathcal{H}_{\phi}\phi-\phi)_{L^2}}{\big(\phi,\mathcal{P}_{\phi}^{-1}\mathcal{I}\phi\big)_{L^2}}\\&-\frac{(\psi,\mathcal{P}^{-1}_{\phi}\mathcal{H}_{\phi}\phi-\phi)_{L^2}}{\big(\psi,\mathcal{P}_{\psi}^{-1}\mathcal{I}\psi\big)_{L^2}}+\frac{(\psi,\mathcal{P}^{-1}_{\phi}\mathcal{H}_{\phi}\phi-\phi-\mathcal{P}^{-1}_{\psi}\mathcal{H}_{\psi}\psi-\psi)_{L^2}}{\big(\psi,\mathcal{P}_{\psi}^{-1}\mathcal{I}\psi\big)_{L^2}},
		\end{align*}
		\eqref{L-H1}, \eqref{L-I}, and \eqref{L-Orthogonalization-Coefficient}, the local Lipschitz continuity of $\lambda_{\phi}$ is proved
		\begin{align*}
			\big|\lambda_{\phi}-\lambda_{\psi}\big|\le C_{\phi}\|\phi-\psi\|_{L^p},
		\end{align*}
		where $p=\max\{p_0,p_1,p_2,2\}\in[1,6)$. 
		\item $(iv)$ The proof can be found in \cite[{\bf Lemma 4.3}]{2024On}.
		Using the orthogonality $(\phi,v)_{L^2}=0$, we directly get
		\begin{align}\label{c_n}
			\nonumber\mathfrak{R}_{\phi}(tv)-(\phi+tv)&=\left(\frac{1}{\;\;\|\phi+tv\|_{L^2}}-1\right)(\phi+tv)=\left(\frac{1}{\sqrt{1+t^2\|v\|^2_{L^2}}}-1\right)(\phi+tv)\\
			&=-\frac{t^2\|v\|^2_{L^2}}{\sqrt{1+t^2\|v\|^2_{L^2}}\Big(1+\sqrt{1+t^2\|v\|^2_{L^2}}\Big)}\big(\phi+tv\big),\\
			\nonumber	\Longrightarrow\;\big|\mathfrak{R}_{\phi}(tv)&-(\phi+tv)\big|\le\frac{1}{2}t^2\|v\|^2_{L^2}|\phi+tv|.
		\end{align}
	\end{itemize}
\end{proof}
\section{On the Form of the Second-Order Sufficient Condition}\label{Appendix-SOSC}
In this appendix, we explain why the second-order sufficient condition for the GP energy functional takes the form given in \eqref{SOSC}. The second-order sufficient condition that is commonly known is of the following form:
\begin{align*}
	\langle (E''(\phi_g) - \lambda_g \mathcal{I}) v, v \rangle > 0, \quad {\text{for all}}\; v \in T_{\phi_g}\mathcal{M} \setminus \{0\}.
\end{align*}
In finite dimensions, this condition is equivalent to \eqref{SOSC} precisely because the unit sphere is compact, and this compactness ensures that the above condition guarantees a local minimum. However, in infinite-dimensional spaces, this is no longer the case. We construct a counterexample below to show that the second-order sufficient condition should be taken in the form of \eqref{SOSC}. {For simplicity, we still consider a minimizer $\phi_g$ of a sufficiently smooth functional on the $L^2$-normalization constraint manifold $\mathcal{M}$, and we continue to denote this functional by $E$.}

To see why, for all $v\in T_{\phi_g}\mathcal{M}$, let $\psi = (\phi_g + t v)/(\|\phi_g + t v\|_{L^2})$ and consider the Taylor expansion:
\begin{align*}
	E(\psi) &= E(\phi_g) + \frac{1}{2}\langle (E''(\phi_g) - \lambda_g \mathcal{I}) (\psi-\phi_g), (\psi-\phi_g) \rangle + o(\|\psi-\phi_g\|_{H^1}^2)\\
	&=E(\phi_g) + \frac{1}{2}\langle (E''(\phi_g) - \lambda_g \mathcal{I}) tv, tv \rangle+ o(\|tv\|_{H^1}^2),
\end{align*}
where the second equality is based on \eqref{C_t1}.
For $E(\phi) \geq E(\phi_g)$ to hold for all sufficiently small $\sigma$ and $\phi\in\mathcal{B}_{\sigma}(\phi_g)$, we must control the quadratic term uniformly. If the second variation is only pointwise positive but not coercive, i.e., if
\[
\inf_{\substack{v \in T_{\phi_g}\mathcal{M} \\ \|v\|_{H^1} = 1}} \langle (E''(\phi_g) - \lambda_g \mathcal{I}) v, v \rangle = 0,
\]
then there exists a sequence $\{v_n\}_{n\in\mathbb{N}} \subset T_{\phi_g}\mathcal{M}$ with $\|v_n\|_{H^1} = 1$ such that the quadratic form tends to zero, and the higher-order remainder may dominate, preventing $E(\phi_g)$ from being a local minimum. Specifically, suppose that the remainder satisfies $o(\|tv\|^2_{H^1}) =-\|tv\|^3_{H^1}$. Let $t_n = \sqrt{\langle (E''(\phi_g) - \lambda_g \mathcal{I}) v_n, v_n \rangle}$ (if $o(\|tv\|^2_{H^1}) =\|tv\|^3_{H^1}$, let $t_n = -\sqrt{\langle (E''(\phi_g) - \lambda_g \mathcal{I}) v_n, v_n \rangle}$). Then we have
\[
\langle (E''(\phi_g) - \lambda_g \mathcal{I}) t_n v_n, t_n v_n \rangle = \langle (E''(\phi_g) - \lambda_g \mathcal{I}) v_n, v_n \rangle^2,
\]
and
\[
\|t_n v_n\|^3_{H^1} = \langle (E''(\phi_g) - \lambda_g \mathcal{I}) v_n, v_n \rangle^{3/2}.
\]
Since the exponent $3/2 < 2$, the cubic remainder term dominates the quadratic term as $n \to \infty$. Now define the normalized sequence
\[
\psi^n = \frac{\phi_g + t_n v_n}{\|\phi_g + t_n v_n\|_{L^2}}.
\]
This sequence lies on the constraint manifold $\mathcal{M}$, and the second-order sufficiency condition is satisfied at $\phi_g$. However, for sufficiently large $n$, we have $E(\psi^n) < E(\phi_g)$, as shown by the following expansion:
\begin{align*}
	E(\psi^n) - E(\phi_g)&=\frac{1}{2} \langle (E''(\phi_g) - \lambda_g \mathcal{I}) t_n v_n, t_n v_n \rangle + o(\|t_n v_n\|^2_{H^1})\\
	&=\frac{1}{2} \langle (E''(\phi_g) - \lambda_g \mathcal{I})v_n,v_n \rangle^2-\langle (E''(\phi_g) - \lambda_g \mathcal{I})v_n,v_n \rangle^{3/2}\\
	&=\left(\frac{1}{2}\sqrt{\langle (E''(\phi_g) - \lambda_g \mathcal{I})v_n,v_n \rangle}-1\right)\langle (E''(\phi_g) - \lambda_g \mathcal{I})v_n,v_n \rangle^{3/2}<0.
\end{align*}
This suggests that $\phi_g$ is not a local minimizer. Therefore, to prove that the second-order condition is sufficient to ensure the critical point is a minimizer, one must demonstrate that the scenario described earlier cannot occur for general functionals.

This difficulty underscores the need for stronger conditions in the infinite-dimensional setting. Thus, we contend that the standard second-order sufficient condition requires uniform positivity (coercivity) on the tangent space:
\[
\langle (E''(\phi_g) - \lambda_g \mathcal{I}) v, v \rangle \geq C \|v\|_{H^1}^2, \quad {\text{for all}}\; v \in T_{\phi_g}\mathcal{M},
\]
for some $C > 0$. {Of course, it should be noted that the above condition is stated for general functionals. For the Gross-Pitaevskii energy functional in the setting of this paper, the two formulations of the second-order condition remain equivalent due to the compact resolvent of the operator $E''(\phi_g)$. Moreover, under the Morse-Bott assumption, this equivalence also holds on the closed subspace $N_{\phi_g}\mathcal{M}$, as established in \textbf{Proposition~\ref{Prop2}}. Nevertheless, for the sake of generality, we adopt the stronger (coercive) form presented above.}

\section{Computation of $\mu$ and $L$ for the Quasi-optimal Preconditioner \eqref{Varep-P}}\label{Appendix-Opt-Pre}

The upper bound $L \leq 1$ is immediate from the inequality
\[
\frac{\langle (E''(\phi_g) - \lambda_g \mathcal{I}) v, v \rangle}{\langle (E''(\phi_g) - \lambda_g \mathcal{I}) v, v \rangle + \sigma_0 \|v\|_{L^2}^2} \leq 1,
\]
since $\sigma_0 > 0$ and the quadratic form in the numerator is non-negative for $v \in T_{\phi_g}\mathcal{M}$. To show that $L = 1$, it suffices to construct a sequence $\{v_n\}_{n\in\mathbb{N}}$ such that the ratio tends to 1 as $n \to \infty$. Recall that $E''(\phi_g)$ is an unbounded, self-adjoint, coercive operator with compact resolvent. Therefore, it admits a discrete spectrum with eigenpairs $(v_n, \mu_n)$ satisfying
\[
E''(\phi_g) v_n = \mu_n v_n,
\]
where $0 \leq \lambda_g < \mu_3 \leq \cdots \leq \mu_n \to \infty$ as $n \to \infty$. The first two eigenfunctions are given by $v_1 = i\phi_g$ and $v_2 = i\mathcal{L}_z \phi_g / \|i\mathcal{L}_z \phi_g\|_{L^2}$ (assuming $i\mathcal{L}_z \phi_g \not\in \mathrm{span}\{i\phi_g\}$, otherwise, $v_2 = v_1$), both associated with the eigenvalue $\mu_1 = \mu_2 = \lambda_g$. All eigenfunctions are normalized in $L^2$ and mutually orthogonal in $L^2$.
Since the eigenfunctions $\{v_n\}_{n\in\mathbb{N}}$ are $L^2$-orthogonal to $i\phi_g$ and $i\mathcal{L}_z\phi_g$, 
{$\text{Proj}_{\phi_g}^{L^2}v_n = v_n - (v_n, \phi_g )_{L^2} \phi_g \in N_{\phi_g}\mathcal{M}$} for $n\ge3$. We claim that the sequence $\left\{\text{Proj}_{\phi_g}^{L^2}v_n\in N_{\phi_g}\mathcal{M}\right\}_{n\ge3\in\mathbb{N}}$ is suitable for our purpose. It remains to show that 
\[
\langle E''(\phi_g) \text{Proj}_{\phi_g}^{L^2} v_n, \text{Proj}_{\phi_g}^{L^2} v_n \rangle \to \infty \quad \text{as } n \to \infty.
\]

To this end, consider the following two inequalities
\begin{align*}
	\langle E''(\phi_g) (\text{Proj}_{\phi_g}^{L^2} + I - \text{Proj}_{\phi_g}^{L^2}) v_n, (\text{Proj}_{\phi_g}^{L^2} + I - \text{Proj}_{\phi_g}^{L^2}) v_n \rangle &> 0, \\
	\langle E''(\phi_g) (\text{Proj}_{\phi_g}^{L^2} - I + \text{Proj}_{\phi_g}^{L^2}) v_n, (\text{Proj}_{\phi_g}^{L^2} - I + \text{Proj}_{\phi_g}^{L^2}) v_n \rangle &> 0.
\end{align*}
Note that $(\text{Proj}_{\phi_g}^{L^2} + I - \text{Proj}_{\phi_g}^{L^2})v_n = v_n$ and $(\text{Proj}_{\phi_g}^{L^2} - I + \text{Proj}_{\phi_g}^{L^2})v_n = (2\text{Proj}_{\phi_g}^{L^2} - I)v_n$, but more importantly, adding these inequalities yields
\[
\langle E''(\phi_g) v_n, v_n \rangle \leq 2 \langle E''(\phi_g) \text{Proj}_{\phi_g}^{L^2} v_n, \text{Proj}_{\phi_g}^{L^2} v_n \rangle + 2 \langle E''(\phi_g) (I - \text{Proj}_{\phi_g}^{L^2}) v_n, (I - \text{Proj}_{\phi_g}^{L^2}) v_n \rangle.
\]

Now observe that
\[
\langle E''(\phi_g) (I - \text{Proj}_{\phi_g}^{L^2}) v_n, (I - \text{Proj}_{\phi_g}^{L^2}) v_n \rangle = (\phi_g, v_n)^2 \langle E''(\phi_g) \phi_g, \phi_g \rangle\le C\quad \text{for}\quad n\ge3.
\]
Therefore, we obtain
\[
\mu_n = \langle E''(\phi_g) v_n, v_n \rangle \leq 2 \langle E''(\phi_g) \text{Proj}_{\phi_g}^{L^2} v_n, \text{Proj}_{\phi_g}^{L^2} v_n \rangle + C,
\]
which implies
\[
\langle E''(\phi_g) \text{Proj}_{\phi_g}^{L^2} v_n, \text{Proj}_{\phi_g}^{L^2} v_n \rangle \geq \frac{1}{2} \mu_n - \frac{C}{2} \to \infty \quad \text{as } n \to \infty.
\]
Consequently,
\[
\lim_{n \to \infty} \frac{\langle (E''(\phi_g) - \lambda_g \mathcal{I}) \text{Proj}_{\phi_g}^{L^2} v_n, \text{Proj}_{\phi_g}^{L^2} v_n \rangle}{\langle (E''(\phi_g) - \lambda_g \mathcal{I}) \text{Proj}_{\phi_g}^{L^2} v_n, \text{Proj}_{\phi_g}^{L^2} v_n \rangle + \sigma_0 \|\text{Proj}_{\phi_g}^{L^2} v_n\|_{L^2}^2} = 1.
\]
This proves that $L = 1$, independent of $\sigma_0$. We further address the lower bound $\mu = \frac{\lambda_3 - \lambda_g}{\lambda_3 - \lambda_g + \sigma_0}$. First, by the monotonicity of the function $x \mapsto \frac{x}{x + \sigma_0}$ for $x > 0$, which is {increasing}, we immediately obtain that for any $v \in N_{\phi_g}\mathcal{M}\backslash\{0\}$,
\begin{align*}
	\frac{\langle E''(\phi_g) v, v \rangle/\|v\|^2_{L^2}- \lambda_g }{\langle E''(\phi_g)  v, v \rangle/\|v\|^2_{L^2} - \lambda_g + \sigma_0}&=\frac{Q_{\phi_g}(v)- \lambda_g }{Q_{\phi_g}(v) - \lambda_g + \sigma_0}\\ 
	&\ge 
	\frac{\min\limits_{v \in N_{\phi_g}\mathcal{M}\backslash\{0\}} Q_{\phi_g}(v)- \lambda_g}{\min\limits_{v \in N_{\phi_g}\mathcal{M}\backslash\{0\}} Q_{\phi_g}(v) - \lambda_g + \sigma_0}=\frac{\lambda_3 - \lambda_g}{\lambda_3 - \lambda_g + \sigma_0}.
\end{align*}
Above, we utilized the property that the infimum of $Q_{\phi_g} $ on $N_{\phi_g}\mathcal{M}$ is achievable. This has been proven in {\bf Proposition \ref{Prop2}}. Therefore, the lower bound is obtained
\[
\mu = \frac{\lambda_3 - \lambda_g}{\lambda_3 - \lambda_g + \sigma_0}.
\]

\end{document}